\numberwithin{equation}{section}
\newcommand{\R}{{\mathbb{R}}}
\def\div{ \hbox{\rm div}\,  }
\def\ta{ \theta }
\def\R{{\mathbb R}}
\def\div{ \operatorname{div}}
\def\u{\mathbf{u}}
\def\v{\mathbf{v}}
\def\w{\mathbf{w}}
\def\aa{\varphi}
\def\ddj{\dot \Delta_j}
\newcommand{\norm}[1]{\lVert #1 \rVert}
\theoremstyle{plain}
\newtheorem{theorem}{Theorem}[section]
\newtheorem{lemma}{Lemma}[section]
\newtheorem{definition}{Definition}[section]
\newtheorem{proposition}{Proposition}[section]
\newtheorem{remark}{Remark}[section]
\numberwithin{equation}{section}
\begin{document}
\title[Global well-posedness and large-time behavior for a special  $2\frac{1}{2}$D full compressible viscous non-resistive MHD system]{Global well-posedness and large-time behavior for a special  $2\frac{1}{2}$D full compressible viscous non-resistive MHD system}

	\subjclass[2020]{35Q35; 35A01; 34B40; 76N10; 76W05}
	
	\keywords{Non-resistive MHD system; Global well-posedness; Optimal time-decay rates; Besov spaces}

	\author[X. Zhai]{Xiaoping Zhai}
	\address[X. Zhai]{School of Mathematics and Statistics, Guangdong University of Technology, Guangzhou, 510520, China}
	\email{pingxiaozhai@163.com}
	
	\author[S. Zhang]{Shunhang Zhang}
	\address[S. Zhang]{School of Mathematics and Computational Science, Wuyi University, Jiangmen, 529020, Guangdong, China} \email{zhangshunhang@wyu.edu.cn}
	\begin{abstract}
	In this paper, we consider the full compressible, viscous, non-resistive MHD system under the assumption that the fluids move on a plane while the magnetic field is oriented vertically. Within the framework of Besov spaces, by introducing several new unknown quantities and exploiting the intrinsic structure of the system, we prove the global well-posedness of strong solutions for initial data close to a constant equilibrium state. Furthermore, under some suitable additional conditions involving only the low-frequency part of the initial perturbation, we develop a Lyapunov-type energy argument, which yields the optimal time-decay rates of the global solution. To the best of our knowledge, our result is the first one on global solvability to the full compressible, viscous, non-resistive MHD system in multi-dimensional whole space.
	\end{abstract}
	\maketitle
	
	\section{Introduction and main results}
The motion of a compressible, viscous, non-resistive and heat-conducting magnetohydrodynamic (MHD) flow is governed by the following full compressible MHD system (see \cite[Chapter 3]{MR3222576}):
	\begin{equation}\label{mm1}
		\begin{cases}
			\partial_t\rho+\operatorname{div}(\rho \mathbf{v})=0,\\
			\rho(\partial_t\mathbf{v} + \mathbf{v}\cdot \nabla \mathbf{v}) -  \mu \Delta \mathbf{v} -(\lambda+\mu) \nabla \operatorname{div} \mathbf{v}+\nabla P=  (\nabla\times \mathbf{B})\times \mathbf{B},\\
			c_\nu\rho (\partial_t\vartheta+\mathbf{v}\cdot\nabla\vartheta)
			+ P\operatorname{div} \mathbf{v}-\kappa\Delta\vartheta=2\mu |D(\mathbf{v})|^2+\lambda(\operatorname{div}\mathbf{v})^2 ,\\
			\partial_t\mathbf{B}-\nabla\times(\mathbf{v}\times\mathbf{B})=0 ,\\
			\operatorname{div}\mathbf{B}=0.
		\end{cases}
	\end{equation}
	Here, the unknowns $\rho\in\mathbb{R}_+$, $\v\in\mathbb{R}^3$, $\vartheta\in\mathbb{R}$ and $\mathbf{B}\in\mathbb{R}^3$ represent the density of fluids, the velocity field, the temperature and the magnetic field, respectively. The pressure $P$ is given by $P:=R\rho\vartheta$, where $R>0$ is the perfect gas constant.  $D(\v):=\frac{1}{2}(\nabla\v+(\nabla\v)^\top)$ is the deformation tensor. The parameters $\mu$  and $\lambda$ are viscosity coefficients and satisfying $\mu>0$ and $\lambda+2\mu>0$. Moreover, $c_\nu>0$ is the specific heat at constant volume and $\kappa >0$ is the heat-conductivity coefficient.\par
	The MHD system is used to describe the dynamics of electrically conducting fluids in the presence of magnetic field. Its applications span a wide range of fields, including astrophysics, cosmology, and industry, among many others. The term `non-resistive' signifies that the resistivity coefficient in \eqref{mm1} is zero. Since the resistivity is inversely proportional to the electrical conductivity, the non-resistive MHD models are closely related to the conducting fluid with extremely high conductivity (see \cite{MR128226,freidberg1982ideal}).\par
	Owing to its physical importance and mathematical complexity, the theory of \eqref{mm1} has attracted continuous attention from researchers over the past decade. Compared with the compressible MHD system with both viscosity and resistivity (see e.g. \cite{MR2238891,MR2646819,MR2430634,1984Systems,MR3056749}), the study of \eqref{mm1} becomes more intricate due to the lack of magnetic diffusion, so its relevant results are less extensive and fruitful. When the temperature is a constant, \eqref{mm1} is reduced to the following isentropic compressible viscous non-resistive MHD system:\begin{equation}\label{isentropic}
		\begin{cases}
			\partial_t\rho+\operatorname{div}(\rho \mathbf{v})=0,\\
			\rho(\partial_t\mathbf{v} + \mathbf{v}\cdot \nabla \mathbf{v}) -  \mu \Delta \mathbf{v} -(\lambda+\mu) \nabla \operatorname{div} \mathbf{v}+\nabla P=  (\nabla\times \mathbf{B})\times \mathbf{B},\\
			\partial_t\mathbf{B}-\nabla\times(\mathbf{v}\times\mathbf{B})=0 ,\\
			\operatorname{div}\mathbf{B}=0,\quad P:=P(\rho).
		\end{cases}
	\end{equation}Below, we first review some previous results on \eqref{isentropic}. In one-dimensional (1D) case, the corresponding theory of \eqref{isentropic} is well-developed, particularly with regard to the global well-posedness of strong solutions for large initial data, as detailed in \cite{MR3694267,MR3983998}. In 2D case, the global existence of smooth solutions in whole space $\mathbb{R}^2$ and periodic domain $\mathbb{T}^2$ for initial data close to a constant equilibrium was obtained by Wu and Wu \cite{MR3620698} and Wu and Zhu \cite{MR4447771}, respectively. Zhong \cite{MR4094974} established the local well-posedness of strong solutions for large initial data containing vacuum. In 3D case, Tan and Wang \cite{MR3766969} proved the global existence of smooth solutions in the infinite slab $\mathbb{R}^2\times (0,1)$ by utilizing a two-tier
	energy method under the Lagrangian coordinates. Wu and Zhai \cite{MR4672311} established the global existence of smooth solutions in $\mathbb{T}^3$ near a background magnetic field. The stability and instability criteria for the stratified compressible magnetic Rayleigh–Taylor problem were investigated by Jiang and Jiang \cite{MR3895772}. If the motion of fluids takes place in
	the plane and the magnetic field acts on fluids only in the vertical direction (see \eqref{under} below), \eqref{isentropic} reduces to a $2\frac12$D compressible MHD system, whose global weak solutions for large initial data and global strong solutions for small initial data have been studied in \cite{MR3955615,MR4246170} and \cite{MR4526359,MR4752565}, respectively. For the investigations on the incompressible viscous non-resistive MHD system (i.e. the density in \eqref{isentropic} is a constant), we refer to \cite{MR3377532,MR3666563,MR3210038,MR3296601,MR3448784} and the references therein. \par
	Let us go back to \eqref{mm1}. When the variation of temperature is taken into account, the couplings and nonlinearities in the system become stronger, which makes the study more difficult. The global strong solution to the 1D initial value problem with or without vacuum was studied by Fan and Hu \cite{MR3390892} and Zhang and Zhao \cite{MR3624387}, respectively. Similar results for the 1D planar full compressible viscous non-resistive MHD system were obtained by Li \cite{MR3806077}. Li and Jiang \cite{MR4008702} investigated the global strong and weak solutions to the 1D Cauchy problem. Zhong \cite{MR4064858} constructed a local strong solution in $\mathbb{R}^2$ for large initial data containing vacuum. The global existence of smooth solutions in the infinite slab $\mathbb{R}^2\times (0,1)$ was obtained by Li \cite{MR4386439}. Li et al. \cite{MR4589732} proved the global existence of smooth solutions in $\mathbb{T}^3$ when the initial perturbation around a constant state is sufficiently small. The local well-posedness of strong solutions in three dimensions was discussed by Fan and Yu \cite{MR2451719}. As for the special $2\frac12$D full compressible MHD system \eqref{mm2}, Li and Sun \cite{MR4289027} proved the global existence of weak solutions in a bounded domain.\par
	However, to the best of our knowledge, the issue of global well-posedness of \eqref{mm1} in $\R^3$ remains unresolved, even when the initial data is sufficiently small or close to a non-zero steady solution. Therefore, we aim to make progress in this direction. Given the mathematical challenges involved, we consider a simplified scenario where the motion of fluids takes place in the plane and the magnetic field acts on fluids only in the vertical direction, namely\begin{equation}\label{under}
		\begin{split}
			&\rho:{=}\rho(t,x_1,x_2),\quad\mathbf{v}=(\mathbf{v}^1(t,x_1,x_2),\mathbf{v}^2(t,x_1,x_2),0):{=}(\mathbf{u},0),\\
			&\vartheta:{=}\vartheta(t,x_1,x_2),\quad\mathbf{B}:{=}(0,0,m(t,x_1,x_2)).
		\end{split}
	\end{equation}
	Under the above changes, \eqref{mm1} becomes the following system:\begin{equation}\label{mm2}
		\begin{cases}
			\partial_t\rho+\operatorname{div}(\rho \mathbf{u})=0,\\
			\rho(\partial_t\mathbf{u} + \mathbf{u}\cdot \nabla \mathbf{u}) -  \mu \Delta \mathbf{u} -(\lambda+\mu) \nabla \operatorname{div}\mathbf{u}+\nabla P +\dfrac{1}{2}\nabla m^2=  0,\\
			c_\nu\rho (\partial_t\vartheta+\u\cdot\nabla\vartheta)
			+ P\operatorname{div}\u-\kappa\Delta\vartheta=2\mu |D(\u)|^2+\lambda(\operatorname{div}\mathbf{u})^2 ,\\
			\partial_tm+\div(m \mathbf{u})=0 .
		\end{cases}
	\end{equation}
	Now the 3D flow is determined by two variables, so \eqref{mm2} is in fact a special $2\frac{1}{2}$D full compressible MHD system. The main purpose of this paper is to study the global well-posedness and large-time behavior of strong solutions to \eqref{mm2} near the equilibrium state $(\bar{\rho},\mathbf{0},\bar{\theta},\bar{b})$, where $\bar{\rho}$, $\bar{\theta}$ and $\bar{b}$ are positive constants. We choose to work in the framework of Besov spaces. Without loss of generality, we set $
	\bar{\rho}=\bar{\theta}=\bar{b}=\mu=c_v=R=\kappa=1,\quad \lambda=-1.
	$
	Define \begin{equation*}
		\begin{split}
			a:=\rho -1, \quad \ta:=\vartheta -1,\quad b:=m-1,
		\end{split}
	\end{equation*}
	then it follows from \eqref{mm2} that  the perturbation $(a,\mathbf{u},\theta,b)$ reads as\begin{equation}\label{mm3}
		\begin{cases}
			\partial_ta+\mathbf{u}\cdot\nabla a+\operatorname{div} \mathbf{u}=-a\operatorname{div}  \mathbf{u},\\
			\partial_t\mathbf{u}+ \mathbf{u}\cdot \nabla \mathbf{u} -  \Delta \mathbf{u}=-\dfrac{a}{1+a}\Delta \u
			-\dfrac{1}{1+a}(\nabla (a+\theta+a\theta)+\dfrac{1}{2}\nabla(b+1)^2),\\
			\partial_t\theta+\mathbf{u}\cdot\nabla\theta-\Delta\theta+ \operatorname{div} \mathbf{u}=-\theta\operatorname{div}\u-  \dfrac{a}{1+a}\Delta \theta+\dfrac{1}{1+a}(2 |D(\u)|^2-(\operatorname{div} \u)^2),\\
			\partial_t{b}+\mathbf{u}\cdot\nabla {b}+\operatorname{div} \mathbf{u}=-{b}\operatorname{div}  \mathbf{u},\\
			(a,\mathbf{u},\theta,b)|_{t=0}=(a_0,\mathbf{u}_0,\theta_0,b_0).
		\end{cases}
	\end{equation}\par
	Our main global well-posedness result of \eqref{mm3} is stated as follows.
	\begin{theorem} \label{dingli2}
		There exists a constant $\varepsilon>0$ such that for any  $(a_0^{\ell},\u_0^{\ell},\theta_0^{\ell},b_0^{\ell})\in \dot{B}_{2,1}^{0}(\R^2)$, $\u_0^h\in \dot{B}_{2,1}^{2}(\R^2),$  and   $(a_0^{h},\theta_0^{h},b_0^{h})\in \dot{B}_{2,1}^{3}(\R^2)$ satisfying\begin{equation}\label{smallness}
			\begin{split}
				\mathcal{X}_0:=\norm{(a_0,\u_0,\ta_0,{b}_0)}_{\dot{B}^{0}_{2,1}}^{\ell}
				+\norm{(a_0,\ta_0,{b}_0)}_{\dot{B}^{3}_{2,1}}^h+\norm{\u_0}_{\dot{B}^{2}_{2,1}}^h\leq \varepsilon,
			\end{split}
		\end{equation}
		the Cauchy problem \eqref{mm3} admits a unique global-in-time solution $(a,\u,\ta,b)$ with\begin{equation}\label{regularpro}
			\begin{split}
				&(a^\ell,b^\ell)\in {C}_b(\R_+;{\dot{B}}_{2,1}^{0}),\qquad\qquad\quad (a^h,b^h)\in {C}_b(\R_+;{\dot{B}}_{2,1}^{3}),\\
				&\u^\ell\in {C}_b(\R_+;{\dot{B}}_{2,1}^{0})\cap L^1(\R_+;{\dot{B}}_{2,1}^{2}),\quad \u^h\in {C}_b(\R_+;{\dot{B}}_{2,1}^{2})\cap L^1(\R_+;{\dot{B}}_{2,1}^{4}),\\
				&\theta^\ell\in {C}_b(\R_+;{\dot{B}}_{2,1}^{0})\cap L^1(\R_+;{\dot{B}}_{2,1}^{2}),\quad \theta^h\in {C}_b(\R_+;{\dot{B}}_{2,1}^{3})\cap L^1(\R_+;{\dot{B}}_{2,1}^{5}).
			\end{split}
		\end{equation}
		Furthermore, there exists some constant $C>0$ such that for all $t\geq 0$,\begin{equation}\label{xiaonorm}
			\begin{split}
				\norm{(a,\u,&{\ta},{b})}_{\widetilde{L}^{\infty}_t(\dot{B}^{0}_{2,1})}^{\ell}+\norm{(a,{\ta},{b})}_{\widetilde{L}^{\infty}_t(\dot{B}^{3}_{2,1})}^{h}
				+\norm{\u}_{\widetilde{L}^{\infty}_t(\dot{B}^{2}_{2,1})}^h\\
				&+\norm{(\aa,\u,{\ta})}_{L^1_t(\dot{B}^{2}_{2,1})}^{\ell}
				+\norm{\aa}_{L^{1}_t(\dot{B}^{3}_{2,1})}^h
				+\norm{\u}_{L^1_t(\dot{B}^{4}_{2,1})}^h+\norm{\ta}_{L^1_t(\dot{B}^{5}_{2,1})}^h\leq C\mathcal{X}_0,
			\end{split}
		\end{equation}
		where $\varphi$ is defined in \eqref{defphi}.
	\end{theorem}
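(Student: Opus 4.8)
The plan is to run a Besov-space analysis in the spirit of the Danchin-type theory for compressible Navier--Stokes--Fourier systems (and its recent adaptations to non-resistive MHD), the decisive point being the structural observation that in \eqref{mm3} the magnetic perturbation $b$ obeys \emph{exactly the same} equation as the density perturbation $a$; hence the Lorentz force reduces to a pure magnetic-pressure gradient and, after a correct regrouping, the only quantity carrying no dissipation is never left in isolation.

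First I would reformulate \eqref{mm3} using the unknowns of \eqref{defphi}: the effective quantity $\varphi$ (a combination of $a+b$ with the potential part of $\u$) together with the difference $d:=a-b$, which solves the pure continuity equation $\partial_t d+\div(d\,\u)=0$ and therefore merely transports its initial regularity. Combined with the Helmholtz splitting $\u=\mathcal{P}\u+\mathcal{Q}\u$ --- the solenoidal part $\mathcal{P}\u$ obeying a heat equation --- and the genuinely parabolic $\theta$, the linearized system decouples into a heat block, the transported scalar $d$, and a coupled block for $(\div\u,\varphi,\theta)$. A spectral study of the associated $3\times3$ symbol shows that at low frequencies all three modes decay like $e^{-c|\xi|^2t}$, whereas at high frequencies two modes are parabolic and the $\varphi$-mode is damped at a fixed rate. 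This is exactly what yields, in \eqref{regularpro}--\eqref{xiaonorm}, the parabolic gain $\varphi\in L^1_t(\dot{B}^2_{2,1})$ in low frequencies and the damping-type bound $\varphi\in\widetilde{L}^\infty_t(\dot{B}^3_{2,1})\cap L^1_t(\dot{B}^3_{2,1})$ (no gain of derivatives) in high frequencies, along with the $\widetilde{L}^\infty\cap L^1$ bounds for $\u$ and $\theta$.

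Next I would set up the a priori estimate. Let $\mathcal{X}(t)$ be the left-hand side of \eqref{xiaonorm}; assuming the bootstrap hypothesis $\mathcal{X}(t)\le 2C\mathcal{X}_0$ with $\mathcal{X}_0$ small, I would apply the frequency-localized linear estimates of the previous step to each equation and bound every nonlinear term by the product, composition and commutator estimates in homogeneous Besov spaces, systematically splitting low and high frequencies via Chemin--Lerner $\widetilde{L}^p_t$ norms; in two dimensions $\dot{B}^3_{2,1}$ is an algebra, which keeps all the high-frequency products in check. The transport structure of $d$ gives $\|d\|^h_{\widetilde{L}^\infty_t(\dot{B}^3_{2,1})}\lesssim\|d_0\|^h_{\dot{B}^3_{2,1}}\exp\bigl(C\|\u\|_{L^1_t(\dot{B}^2_{2,1}\cap\dot{B}^4_{2,1})}\bigr)$ with the exponent $\lesssim\mathcal{X}(t)$ small, after which $a=\tfrac12(\varphi+d)$ and $b=\tfrac12(\varphi-d)$ --- hence the magnetic-field regularity in \eqref{regularpro} --- are recovered. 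This produces $\mathcal{X}(t)\le C\mathcal{X}_0+C\mathcal{X}(t)^2$, closing the bootstrap for $\varepsilon$ small and giving \eqref{xiaonorm}.

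Existence then follows from a standard approximation scheme (a Friedrichs regularization, or local existence by contraction combined with the global a priori bound), passing to the limit by compactness and the usual time-continuity arguments to reach \eqref{regularpro}; uniqueness is obtained by a stability estimate on the difference of two solutions performed with one fewer derivative, following Danchin, the loss being absorbed by the $L^1$-in-time control of $\u$, $\varphi$ and $\theta$. The principal obstacle throughout is the absence of magnetic diffusion: $b$, equivalently $d=a-b$, gains no smoothing and only lives in $\widetilde{L}^\infty_t(\dot{B}^3_{2,1})$, while the magnetic-pressure force feeds the momentum equation gradient terms such as $\tfrac{1}{1+a}\nabla(b^2)$ that are not a priori time-integrable and are of the same order as the quantity one is trying to bound. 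Surmounting this forces one to exploit the precise algebra of \eqref{mm3}: group the fluid and magnetic pressures so that only $\nabla\varphi\sim\nabla(a+b)+\cdots$, which \emph{is} time-integrable thanks to the damping, is ever contracted against $\mathcal{Q}\u$; absorb the remaining pure-gradient terms quadratic in $d$ into the effective pressure defining $\varphi$; and choose $\varphi$ so that no nonlinear term forces it at a regularity above $\dot{B}^3_{2,1}$. Carrying out this bookkeeping correctly is the heart of the argument.
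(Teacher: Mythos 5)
Your proposal is correct in substance and follows the same backbone as the paper: the same effective pressure $\varphi$ from \eqref{defphi} turning \eqref{linequ} into a Navier--Stokes--Fourier-type system, the same low/high frequency dichotomy (parabolic decay at low frequencies, damping for $\varphi$ and smoothing for $(\u,\theta)$ at high frequencies), the same Chemin--Lerner functional and quadratic bootstrap $\mathcal{X}(t)\lesssim\mathcal{X}_0+\mathcal{X}(t)^2$, and local existence from Danchin's theory. Two of your choices genuinely differ from the paper. First, you extract the linear decay from a spectral study of the $3\times3$ symbol of the $(\div\u,\varphi,\theta)$ block after a Helmholtz splitting, whereas the paper never diagonalizes: it builds, block by block in frequency, a hypocoercive energy functional $\mathcal{L}_{\ell,j}^2$, $\mathcal{L}_{h,j}^2$ containing the cross term $\eta\int\ddj\u\cdot\nabla\ddj\varphi\,dx$ (plus a commutator argument on the transport part of the $\varphi$-equation at high frequencies). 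The energy route is more robust under the variable-coefficient perturbations $I(a)\Delta\u$, $I(a)\nabla\varphi$, which is why the paper can treat the nonlinearity and the linear decay in one pass; the spectral route is cleaner for the linear picture but forces you to treat all such terms as external forces. Second, to recover the non-dissipated unknowns you use $d:=a-b$, which solves the sourceless continuity equation $\partial_td+\div(d\u)=0$; the paper instead uses $\delta:=\varphi-2a$ (and its analogue for $b$), which is transported with the source $2a\div\u+F_4$. Your $d$ is cleaner at the transport step, but note that the inversion is then only approximate: since $\varphi=a+a\theta+b+\tfrac12b^2$, one has $2a=\varphi+d-a\theta-\tfrac12b^2$ rather than $a=\tfrac12(\varphi+d)$ exactly, so you must absorb the quadratic corrections (harmless for small $\mathcal{X}$, but it should be said). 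The paper's $\delta$ gives $a=\tfrac12(\varphi-\delta)$ exactly at the price of estimating the source.

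Two further small inaccuracies to repair: $\varphi$ is a function of $(a,\theta,b)$ only and involves no ``potential part of $\u$''; and $\dot B^{3}_{2,1}(\R^2)$ is \emph{not} an algebra as a homogeneous space (only $\dot B^{1}_{2,1}(\R^2)\hookrightarrow L^\infty$ is) --- the high-frequency products must be closed with the two-index product laws \eqref{law}--\eqref{daishu} using the full control in $\dot B^{1}_{2,1}\cap\dot B^{3}_{2,1}$, exactly as in \eqref{nestimates}--\eqref{nestimates2}. Neither point affects the viability of your argument.
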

	\begin{remark}
		We believe it would be possible to develop an analogue of Theorem \ref{dingli2} in the more general $L^p$ framework like those efforts for the compressible Navier-Stokes system (see e.g. \cite{MR2679372,MR2675485,MR3563240}). However, we refrain from doing so because it is beyond our primary interest in this paper, which is to figure out the basic dissipative structure of \eqref{mm2} and verify its global solvability in $\mathbb{R}^2$.
	\end{remark}
	\begin{remark}
		The stabilizing phenomenon of a background magnetic field on electrically conducting fluids has been observed in previous physical experiments and numerical simulations (see e.g. \cite{alexakis2011two,gallet2009influence,MR3377306,MR1825486,burattini2010decay}). To some extent, our result rigorously justify this phenomenon for \eqref{mm2} in mathematics.
	\end{remark}
	Let us point out the new ingredients in the proof of Theorem \ref{dingli2}. The key point of proving the global solvability of \eqref{mm3} with small initial data is to establish the uniform-in-time a priori estimates of the solution. For this, a natural idea is to use the classical compressible Navier-Stokes approach. However, we can not do it directly, as the linear part of \eqref{mm3} does not possess the same structure as the full compressible Navier-Stokes system. In order to make the compressible Navier-Stokes approach applicable, we introduce a new variable $\varphi:=a(\theta+1)+\frac{1}{2}(b+1)^2-\frac{1}{2}$ such that the system of $(\varphi,\mathbf{u},\theta)$ (see \eqref{linequ}) has the same linear part as the full compressible Navier-Stokes system. Then, by exploiting the delicate energy analysis, we can obtain a parabolic smoothing effect for low frequencies of $(\varphi,\mathbf{u},\theta)$, a damping effect for high frequencies of $\varphi$ and a parabolic smoothing effect for high frequencies of $(\mathbf{u},\theta)$, which means that such a new variable $\varphi$ captures a dissipation arising from the combination of density, magnetic field and temperature. It should be emphasized that compared with the full compressible Navier-Stokes system, the variables $(a,b)$ do not have dissipations, so we shall use some suitable product and commutator estimates to treat the nonlinear terms involving $(a,b)$. Finally, we derive the $L^\infty$ estimates of $(a,b)$ in time to enclose the a priori estimates. On the other hand, since the nonlinear part of \eqref{linequ} is different from the full compressible Navier-Stokes system, we need to choose a suitable functional setting. As mentioned above, the low frequencies of $(\varphi,\mathbf{u},\theta)$ have parabolic behavior, hence, to ensure a control of  $\|\nabla(\varphi,\mathbf{u},\theta)\|_{L^1(0,T;L^\infty)}$, by the  embedding $\dot{B}^{1}_{2,1}(\mathbb{R}^2)\hookrightarrow L^\infty(\mathbb{R}^2)$, we first impose the minimal regularity requirement on the low-frequency part, namely $(a_0^\ell,\mathbf{u}_0^\ell,\theta_0^\ell,b_0^\ell)\in\dot{B}^{0}_{2,1}(\mathbb{R}^2)$ (by the form of $\varphi$, we regard that the regularity of $\varphi$ is the same as $(a,b)$). For the high-frequency part, we assume $(a_0^h,\mathbf{u}_0^h,\theta_0^h,b_0^h)\in\dot{B}^{\alpha}_{2,1}(\mathbb{R}^2)\times\dot{B}^{\beta}_{2,1}(\mathbb{R}^2)\times\dot{B}^{\gamma}_{2,1}(\mathbb{R}^2)\times\dot{B}^{\alpha}_{2,1}(\mathbb{R}^2)$ with $\alpha\geq1$ and $\beta,\gamma\geq 0$ (to control $\|a\|_{L^\infty(0,T;L^\infty)}$ and $\|\nabla(\varphi,\mathbf{u},\theta)\|_{L^1(0,T;L^\infty)}$). Note that the term $I(a)\Delta\theta$ appears in the equation of $\varphi$ (see \eqref{linequ111}), and remember that $\varphi$ is damped and $\theta$ is parabolic at high frequencies, so the product estimate (see details in Section \ref{sec2})\begin{equation*}
		\begin{split}
			\|I(a)\Delta\theta\|_{L_T^{1}(\dot{B}^{\alpha}_{2,1})}^h\lesssim\|a\|_{L_T^{\infty}(\dot{B}^{1}_{2,1})}\|\theta\|_{L_T^{1}(\dot{B}^{\alpha+2}_{2,1})}+\|a\|_{L_T^{\infty}(\dot{B}^{\alpha}_{2,1})}\|\theta\|_{L_T^{1}(\dot{B}^{3}_{2,1})}
		\end{split}
	\end{equation*}indicates that $\gamma\geq \alpha$. Similarly, the appearance of $I(a)\nabla\varphi$ and $I(a)\Delta\theta$ in the equations of $\mathbf{u}$ and $\theta$ also yields $\alpha\geq\beta+1$ and $\alpha\geq\gamma$. Besides, the terms of type $D\mathbf{u}\otimes D\mathbf{u}$ in the equation of $\varphi$ are bounded by\begin{equation*}
		\begin{split}
			\|D\mathbf{u}\otimes D\mathbf{u}\|_{L_T^{1}(\dot{B}^{\alpha}_{2,1})}^h\lesssim\|\mathbf{u}\|_{L_T^{\infty}(\dot{B}^{2}_{2,1})}\|\mathbf{u}\|_{L_T^{1}(\dot{B}^{\alpha+1}_{2,1})},
		\end{split}
	\end{equation*}from which and the parabolic smoothing effect for the high frequencies of $\mathbf{u}$, we have $\beta\geq \max\{2,\alpha-1\}$. Therefore, a possible minimal choice of $(\alpha,\beta,\gamma)$ is $(3,2,3)$, namely $(\mathbf{u}_0^h,a_0^h,\theta_0^h,b_0^h)\in\dot{B}^{2}_{2,1}(\mathbb{R}^2)\times(\dot{B}^{3}_{2,1}(\mathbb{R}^2))^3$, which is indeed fit for the estimates of nonlinear terms. In comparision with the previous global well-posedness results for the isentropic counterpart (i.e. the temperature fluctuation $\theta$ in \eqref{mm3} is absent) in \cite{MR4526359}, in which the regularity of functional setting is at the usual critical level (see e.g. \cite{MR2679372} for the meaning of `critical'), the regularity of high-frequency part here is higher, whose main reason lies on the fact that more nonlinear terms arise in the auxiliary system \eqref{linequ} due to the presence of temperature. Thus, our result is not a straightforward generalization of that presented in \cite{MR4526359} to the non-isentropic case, but the situation we consider here is significantly more complicated.\par
	A natural next step following Theorem \ref{dingli2} is to provide a large-time asymptotic description to the global strong solution constructed above. Under the additional assumptions on the low-frequency part of the initial data, by performing a Lyapunov-type energy argument, we obtain the optimal time-decay rates of solutions as follows.
	\begin{theorem}\label{dingli3}
		Let $(a,\u,\ta,b)$ be the global solution given in Theorem \ref{dingli2}. For any $0<\sigma\leq 1$, if in addition $\|(a_0,\u_0,\ta_0,b_0)\|^{\ell}_{\dot B^{{-\sigma}}_{2,\infty}}<\infty$, then it holds, for all $t\geq 0$,  that
		\begin{equation}\label{tdecay}
			\begin{split}
				\norm{\Lambda^\gamma(\aa,\u,\ta)}_{L^2}
				&\lesssim (1+t)^{-\frac{{\sigma+\gamma}}{2}}\quad\text{if}\quad -\sigma<\gamma\leq 0,
			\end{split}
		\end{equation}where $\Lambda:=(-\Delta)^{\frac{1}{2}}$ and $\varphi$ is defined in \eqref{defphi}.
	\end{theorem}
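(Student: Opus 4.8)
The plan is to carry out the by-now-classical Lyapunov-type / time-weighted energy argument for dissipative systems in critical Besov spaces (in the spirit of Danchin--Xu and its successors), applied to the auxiliary system \eqref{linequ} for $(\aa,\u,\ta)$, whose linear part coincides with that of the full compressible Navier--Stokes system. It proceeds in three stages: (i) propagate the low-frequency negative-order norm $\|(\aa,\u,\ta)(t)\|^{\ell}_{\dot B^{-\sigma}_{2,\infty}}$ uniformly in time; (ii) set up a time-weighted functional $\mathcal{D}(t)$ measuring the decay of $(\aa,\u,\ta)$ and close a bootstrap giving $\mathcal{D}(t)\lesssim \mathcal{X}_0+\|(a_0,\u_0,\ta_0,b_0)\|^{\ell}_{\dot B^{-\sigma}_{2,\infty}}$; (iii) read off \eqref{tdecay} by interpolation and the embedding $\dot B^{\gamma}_{2,1}\hookrightarrow\dot H^{\gamma}$.

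For stage (i), note that by \eqref{defphi} one has $\aa_0=a_0+\ta_0+b_0+\big(a_0\ta_0+\tfrac12 b_0^2\big)$, so the hypothesis on the data together with $\mathcal{X}_0\le\varepsilon$ and a product estimate give $D_0:=\|(\aa_0,\u_0,\ta_0)\|^{\ell}_{\dot B^{-\sigma}_{2,\infty}}<\infty$. Since the low-frequency block of \eqref{linequ} is of parabolic type, applying $\ddj$ to the Duhamel formula and using the heat-type decay of the low-frequency semigroup gives
\[
\|(\aa,\u,\ta)(t)\|^{\ell}_{\dot B^{-\sigma}_{2,\infty}}\lesssim D_0+\int_0^t\big\|(\text{nonlinear terms of \eqref{linequ}})(\tau)\big\|^{\ell}_{\dot B^{-\sigma}_{2,\infty}}\,d\tau .
\]
Every nonlinear term being at least quadratic in the perturbation, it is bounded via a law of type $\|fg\|_{\dot B^{-\sigma}_{2,\infty}}\lesssim\|f\|_{\dot B^{1}_{2,1}}\|g\|_{\dot B^{-\sigma}_{2,\infty}}$ --- valid on $\R^2$ precisely for $0<\sigma\le 1=\frac d2$, which explains the range --- and then controlled using the uniform and time-integrable norms furnished by Theorem~\ref{dingli2}, closing stage (i) with a bound uniform in $t$.

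For stage (ii), one introduces a functional of the schematic form
\[
\mathcal{D}(t):=\sup_{\tau\in[0,t]}\Big(\langle\tau\rangle^{\frac{\sigma+1}{2}}\|(\aa,\u,\ta)(\tau)\|^{\ell}_{\dot B^{1}_{2,1}}+\langle\tau\rangle^{\alpha}\big(\|(\aa,\ta)(\tau)\|^{h}_{\dot B^{3}_{2,1}}+\|\u(\tau)\|^{h}_{\dot B^{2}_{2,1}}\big)\Big)
\]
with $\alpha\ge\frac{\sigma+1}{2}$. The low-frequency piece is treated by Duhamel: the low-mode semigroup maps $\dot B^{-\sigma}_{2,\infty}\to\dot B^{1}_{2,1}$ with gain $\langle t\rangle^{-(\sigma+1)/2}$, while the quadratic source is estimated, after interpolating intermediate decaying norms between the anchors $\dot B^{-\sigma}_{2,\infty}$ and $\dot B^{1}_{2,1}$, by $\lesssim(\mathcal{X}_0+\mathcal{D}(\tau))\,\mathcal{D}(\tau)\,\langle\tau\rangle^{-\beta}$ with $\beta$ large enough to be integrable against the kernel, so smallness absorbs it. The high-frequency piece is handled by rerunning the frequency-localized energy estimates of the proof of Theorem~\ref{dingli2} --- using the damping of $\aa^h$ and the parabolicity of $(\u^h,\ta^h)$ --- now carrying the weight $\langle\tau\rangle^{\alpha}$ and feeding the low-frequency couplings through the decay already encoded in $\mathcal{D}$. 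A continuity/Gronwall argument then yields $\mathcal{D}(t)\lesssim D_0+\mathcal{X}_0$ for all $t\ge0$.

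Finally, for stage (iii) and the main difficulty: for $-\sigma<\gamma\le 0$, the interpolation $\|f\|_{\dot B^{\gamma}_{2,1}}\lesssim\|f\|^{1-\theta}_{\dot B^{-\sigma}_{2,\infty}}\|f\|^{\theta}_{\dot B^{1}_{2,1}}$ with $\gamma=-(1-\theta)\sigma+\theta$, combined with stages (i)--(ii), gives $\|(\aa,\u,\ta)(t)\|^{\ell}_{\dot B^{\gamma}_{2,1}}\lesssim(1+t)^{-\frac{\sigma+\gamma}{2}}$; the high-frequency part is dominated by $\|(\aa,\ta)\|^{h}_{\dot B^{3}_{2,1}}$ and $\|\u\|^{h}_{\dot B^{2}_{2,1}}$, which decay like $\langle t\rangle^{-\alpha}$ with $\alpha$ comfortably larger than $\frac{\sigma+\gamma}{2}$, so that $\|\Lambda^{\gamma}(\aa,\u,\ta)(t)\|_{L^2}=\|(\aa,\u,\ta)(t)\|_{\dot H^{\gamma}}\lesssim\|(\aa,\u,\ta)(t)\|_{\dot B^{\gamma}_{2,1}}\lesssim(1+t)^{-\frac{\sigma+\gamma}{2}}$, which is \eqref{tdecay}. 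The hard part is the weighted nonlinear bookkeeping in stage (ii): one must verify that the \emph{weighted} nonlinear terms --- notably the twice-differentiating $I(a)\Delta\u$, $I(a)\Delta\ta$ and the quadratic $D\u\otimes D\u$ contributions entering the $\aa$- and $\ta$-equations --- still decay fast enough to be integrated against the parabolic and damping kernels, even though $a$ and $b$ carry no dissipation of their own and enter the analysis only through $\aa$ and the high-regularity $L^{\infty}_t$ bounds; this forces a careful choice of the weight exponent $\alpha$ and of the intermediate interpolation indices so that every product law closes, and the endpoint $\sigma=1$ in dimension two likewise demands care in the negative-norm product estimates of stage (i).
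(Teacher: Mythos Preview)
Your three–stage outline is the right architecture, and stage~(iii) matches the paper. But there is a genuine gap in stage~(i): you propose to propagate only $\|(\aa,\u,\ta)\|^{\ell}_{\dot B^{-\sigma}_{2,\infty}}$ and to control all nonlinear terms via $\|fg\|_{\dot B^{-\sigma}_{2,\infty}}\lesssim\|f\|_{\dot B^{1}_{2,1}}\|g\|_{\dot B^{-\sigma}_{2,\infty}}$ together with the bounds of Theorem~\ref{dingli2}. This does not close on the term $I(a)\nabla\aa$ in $F_2$. If you put $I(a)$ in $\dot B^{1}_{2,1}$ and $\nabla\aa$ in $\dot B^{-\sigma}_{2,\infty}$, the latter is $\|\aa\|_{\dot B^{1-\sigma}_{2,\infty}}$, which at low frequencies is controlled by $\mathcal{Y}(\tau)$ and at high frequencies by $\mathcal{E}(\tau)$; but the prefactor $\|a\|_{\dot B^{1}_{2,1}}\lesssim\mathcal{E}(\tau)$ is only in $L^{\infty}_t$ (there is no dissipation on $a$), so Gronwall yields exponential growth, not a uniform bound. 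The only way to get an \emph{integrable} coefficient is to put $\nabla\aa$ (or $\nabla\ta$) in $\dot B^{1}_{2,1}$, which forces $I(a)$ into $\dot B^{-\sigma}_{2,\infty}$ and hence requires control of $\|a\|^{\ell}_{\dot B^{-\sigma}_{2,\infty}}$ --- a quantity you have not included. The paper resolves exactly this by enlarging the propagated quantity to $\mathcal{Y}(t)=\|(a,\aa,\u,\ta)\|^{\ell}_{\dot B^{-\sigma}_{2,\infty}}$ and obtaining the $a$-part from the transport equation for $\delta:=\aa-2a$ (the same device used in Lemma~\ref{ablemma}); your remark that ``$a$ and $b$ enter only through $\aa$ and the high-regularity $L^{\infty}_t$ bounds'' is precisely where the argument breaks. (Minor: your expression for $\aa_0$ has a spurious $\ta_0$; from \eqref{defphi}, $\aa_0=a_0+b_0+a_0\ta_0+\tfrac12 b_0^2$.)

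For stage~(ii) your proposal is a different route from the paper rather than a mistake. The paper does \emph{not} build a time-weighted bootstrap functional; instead it rewrites the a~priori estimates of Section~\ref{sec2} as the differential inequality
\[
\frac{d}{dt}\big(\|(\aa,\u,\ta)\|^{\ell}_{\dot B^{0}_{2,1}}+\|(\aa,\ta)\|^{h}_{\dot B^{3}_{2,1}}+\|\u\|^{h}_{\dot B^{2}_{2,1}}\big)+c\,\mathcal{D}(t)\le 0,
\]
uses interpolation with the negative norm from stage~(i) to bound $\mathcal{D}(t)$ below by the $(1+\tfrac{2}{\sigma})$-th power of the energy, and solves the resulting Lyapunov inequality $\frac{d}{dt}E+\tilde c\,E^{1+2/\sigma}\le 0$ directly to get $E\lesssim(1+t)^{-\sigma/2}$; the $\dot B^{\gamma}_{2,1}$ decay for $-\sigma<\gamma<0$ then follows by a second interpolation. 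This is considerably shorter than the weighted Duhamel scheme you sketch and sidesteps the delicate ``weighted nonlinear bookkeeping'' you flag as the hard part --- in particular, it never needs $a$ to decay, only the uniform $L^{\infty}_t$ bound from Theorem~\ref{dingli2}. Your approach could in principle be pushed through, but it would again require tracking a weighted negative-order norm of $a$ for the same reason as in stage~(i).
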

	\begin{remark}
		The time-decay rates \eqref{tdecay} are optimal in the sense that they coincide with that of the heat semi-group.
	\end{remark}
	The rest of the paper unfolds as follows. In the Section \ref{sec2}, we obtain the a priori estimates of solutions to the Cauchy problem \eqref{mm3} and prove Theorem \ref{dingli2}. In Section \ref{sec3}, by performing a Lyapunov-type energy argument, we complete the proof of Theorem \ref{dingli3}. In the appendix, we recall some basic facts on the Littlewood-Paley theory used in this paper.\\[10pt]
	\textbf{Notation: }Throughout the paper, $C$ stands for a generic positive constant, which may vary on different lines. For brevity, we sometimes use $a\lesssim b$ to replace $a\leq Cb$. The notation $a\approx b$ means that both $a\lesssim b$ and $b\lesssim a$ hold. Given a Banach space $X$, we agree that $\|(a,\cdots,b)\|_X:=\|a\|_X+\cdots+\|b\|_X$. For any two operators $A$ and $B$, we denote by $[A,B]=AB-BA$ the commutator between $A$ and $B$. For any interval $I$ of $\mathbb{R}$, $C(I;X)$ denotes the set of continuous functions on $I$ with values in $X$. For $p\in[1,+\infty]$, we denote by $L^p(I;X)$ the set of measurable functions $u:I\rightarrow X$ such that $t\rightarrow \|u(t)\|_X$ belongs to $L^p(I)$ and write $\|\cdot\|_{L^p_T(X)}:=\|\cdot\|_{L^p(0,T;X)}$.
	
	\section{The proof of  Theorem \ref{dingli2}}\label{sec2}
	The goal of this section is to establish the global existence of strong solutions to the Cauchy problem \eqref{mm3} with small initial data. Our idea is to combine the local existence result with some a priori estimates, and then employ a standard continuity argument to extend the local solution globally. As a first step, we provide a local well-posedness result of \eqref{mm3}.
	\begin{proposition}\label{dingli1}
		Assume  $\u_0\in \dot{B}_{2,1}^{0}(\R^2)\cap \dot{B}_{2,1}^{2}(\R^2)$,  $(a_0,\theta_0,b_0)\in \dot{B}_{2,1}^{0}(\R^2)\cap \dot{B}_{2,1}^{3}(\R^2)$ with $1 + a_0$ bounded away from zero. Then there exists a positive time
		$T$ such that the Cauchy problem \eqref{mm3} admits a unique solution $(a, \u,\theta, b)$ on $[0,T]$ with $1+a$ bounded away from zero and\begin{equation}
			\begin{split}
				(a,b)&\in C([0,T ];{\dot{B}}_{2,1}^{0}\cap {\dot{B}}_{2,1}^{3}),\\
				\u&\in C([0,T];{\dot{B}}_{2,1}^{0}\cap {\dot{B}}_{2,1}^{2})\cap L^{1}
				(0,T;{\dot{B}}_{2,1}^{2}\cap {\dot{B}}_{2,1}^{4}),\\
				\theta&\in C([0,T ];{\dot{B}}_{2,1}^{0}\cap {\dot{B}}_{2,1}^{3})\cap L^{1}
				(0,T;{\dot{B}}_{2,1}^{2}\cap {\dot{B}}_{2,1}^{5}).
			\end{split}
		\end{equation}
	\end{proposition}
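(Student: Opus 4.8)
The plan is to construct the solution by a standard iteration scheme in the spirit of Danchin's work on the compressible Navier--Stokes system, adapted to the present quadruple $(a,\u,\ta,b)$ and to the functional setting with separated low- and high-frequency regularities. First I would set up the approximating sequence $(a^n,\u^n,\ta^n,b^n)$: start from $(a^0,\u^0,\ta^0,b^0)\equiv(a_0,\u_0,\ta_0,b_0)$ (or its localized-in-frequency truncation), and at step $n+1$ solve the \emph{linear} problems obtained by freezing the coefficients and source terms at level $n$. Concretely, $a^{n+1}$ and $b^{n+1}$ solve linear transport equations $\partial_t f+\u^n\cdot\nabla f=-f\,\div\u^n$ (for which one uses the transport estimates in Besov spaces recalled in the appendix, together with the fact that $\div\u^n\in L^1(0,T;\mathrm{Lip})$ via the embedding $\dot B^1_{2,1}\hookrightarrow L^\infty$ in dimension two); $\u^{n+1}$ solves a linear Lam\'e system $\partial_t\u-\Delta\u=F^n$ with $F^n$ built from the previous iterate; and $\ta^{n+1}$ solves a linear heat-type equation $\partial_t\ta-\Delta\ta+\div\u^n=G^n$. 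For these parabolic equations one invokes the standard maximal-regularity estimates in $\widetilde L^\rho_T(\dot B^s_{2,1})$.

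The second step is the uniform bound: I would show there is a time $T>0$, depending only on the norms of the initial data, and a radius $R>0$ such that the whole sequence stays in the ball of radius $R$ of the space
\begin{equation*}
\begin{split}
E_T:=\ &\big\{(a,\u,\ta,b):\ (a,b)\in C([0,T];\dot B^0_{2,1}\cap\dot B^3_{2,1}),\\
&\ \u\in C([0,T];\dot B^0_{2,1}\cap\dot B^2_{2,1})\cap L^1(0,T;\dot B^2_{2,1}\cap\dot B^4_{2,1}),\\
&\ \ta\in C([0,T];\dot B^0_{2,1}\cap\dot B^3_{2,1})\cap L^1(0,T;\dot B^2_{2,1}\cap\dot B^5_{2,1})\big\}.
\end{split}
\end{equation*}
The key mechanism is that in the linear estimates the nonlinear contributions always come multiplied by a factor that is small when $T$ is small: transport/commutator terms carry $\int_0^T\|\nabla\u^n\|_{L^\infty}\,dt$, and the product estimates for the parabolic source terms (of the schematic types $I(a)\Delta\theta$, $I(a)\nabla\varphi$, $\theta\,\div\u$, $D\u\otimes D\u$, and $a\,\div\u$) are each controlled, using the paraproduct/product rules from Section \ref{sec2}, by a power of $T$ times products of $E_T$-norms — provided $1+a^n$ stays bounded away from zero, which itself follows from the transport equation and the smallness of $\int_0^T\|\u^n\|_{\mathrm{Lip}}\,dt$ on a short interval. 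Closing these estimates by a continuity/induction argument fixes $R$ and $T$.

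The third step is contraction: I would estimate the differences $\delta a^n:=a^{n+1}-a^n$, etc., in a space one derivative below (the usual loss of one derivative for transport estimates in Besov spaces), e.g. $\delta\u^n\in C([0,T];\dot B^{-1}_{2,1})\cap\cdots$ and $(\delta a^n,\delta b^n)\in C([0,T];\dot B^{-1}_{2,1}\cap\dot B^{2}_{2,1})$, and show the scheme is a contraction on $[0,T]$ for $T$ possibly smaller, so that $(a^n,\u^n,\ta^n,b^n)$ converges. The limit is then shown to solve \eqref{mm3} by passing to the limit in each equation, and uniqueness follows from the same difference estimates applied to two hypothetical solutions. I expect the \textbf{main obstacle} to be the contraction step for the transported quantities $a$ and $b$: because they enjoy no smoothing, the difference estimates for $\delta a^n,\delta b^n$ only close in a space with one derivative less than the top regularity $\dot B^3_{2,1}$, so one must carefully track that the nonlinear source terms of $\delta\u^{n}$ and $\delta\ta^{n}$ — which involve $\delta a^{n-1}$ and $\delta b^{n-1}$ multiplied by second derivatives of $\u,\ta$ — are still estimable in this lower-regularity norm; this forces the precise choice of the difference space and is where the separated high-frequency regularities $(\dot B^2_{2,1}$ for $\u$, $\dot B^3_{2,1}$ for $a,\ta,b)$ are used in an essential way. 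The persistence of the time-continuity (the $C([0,T];\cdot)$ statements, as opposed to merely $\widetilde L^\infty_T$) is routine once the bounds are in hand, via the equations and density of Schwartz functions.
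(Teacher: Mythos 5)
Your plan is correct and follows essentially the same route as the paper: the authors simply invoke the local theory of Danchin for the full compressible Navier--Stokes system (iteration on the linearized transport/parabolic equations, uniform bounds on a short time interval, contraction in a space with one derivative less), noting that $b$ satisfies the same transport equation as $a$ and can be treated identically. Your sketch fills in the standard details that the paper omits.
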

	\begin{proof}
		From $\eqref{mm3}$, we observe that the magnetic fluctuation $b$ not only satisfies the same equation as the density fluctuation $a$ but also has only one derivative loss in nonlinear terms. Therefore, the proof can proceed analogously to that of \cite[Theorem 0.2]{MR1855277} for the full compressible Navier-Stokes system, where $b$ can be handled in a same manner as  $a$. We omit the details for simplicity of presentation.
	\end{proof}
	With the local well-posedness result in hand, the major part in what follows is to derive the uniform-in-time a priori estimates of solutions. In order to capture the hidden dissipation of \eqref{mm3}, we introduce a new variable $\varphi$ as\begin{equation}\label{defphi}
		\begin{split}
			\varphi:=a(\theta+1)+\frac{1}{2}(b+1)^2-\frac{1}{2}.
		\end{split}
	\end{equation}It is shown by direct computations that $(\varphi,\u,\theta)$ fulfills\begin{equation}\label{linequ}
		\begin{cases}
			\partial_t\varphi+2\div \u={F_1},\\
			\partial_t\u-\Delta\u+\nabla \varphi +\nabla\ta={F_2},\\
			\partial_t\theta-\Delta\theta+ \div \u={F_3},\\
			\varphi|_{t=0}=\varphi_0:{=}a_0+a_0\theta_0+\dfrac{1}{2}b_0^2+b_0,\quad 	(\u,\ta)|_{t=0}=(\u_0,\ta_0),
		\end{cases}
	\end{equation}
	where
	\begin{equation}\label{linequ111}
		\begin{split}
			{F_1}:=&-\u\cdot\nabla \varphi-2\varphi\div \u-\theta\div\u+ I(a)\Delta\theta+I(a)(2|D(\u)|^2-(\div \u)^2),\\
			{F_2}:=&-\u\cdot\nabla \u +I(a)\nabla\varphi+I(a)\nabla\theta-I(a)\Delta\u,\\
			{F_3}:=&-\u\cdot\nabla \theta-\theta\div\u- I(a)\Delta\theta+\dfrac{1}{1+a}(2|D(\u)|^2-(\div \u)^2),
		\end{split}
	\end{equation}with\begin{equation*}
		\begin{split}
			I(a):=\dfrac{a}{1+a}.
		\end{split}
	\end{equation*}\par
	Throughout this section, we take for granted that
	\begin{equation}\label{eq:smalla}
		\sup_{t\in\R_+,\, x\in\R^2} |a(t,x)|\leq \frac12,
	\end{equation}
	which allows us to use freely the composition estimates stated in Lemma \ref{fuhe}. Actually,
	since $\dot B^{1}_{2,1}(\R^2)\hookrightarrow L^\infty(\R^2)$, condition \eqref{eq:smalla} is guaranteed by the small solution bound in \eqref{xiaonorm}. Besides,
	to simplify the writing, we adopt the following notations:\begin{equation}\label{normdef}
		\begin{split}
			\mathcal{E}(t):=&
			\norm{(\varphi,\u,{\ta},a,{b})(t)}_{\dot{B}^{0}_{2,1}}^{\ell}+\norm{(\varphi,{\ta},a,{b})(t)}_{\dot{B}^{3}_{2,1}}^{h}
			+\norm{\u(t)}_{\dot{B}^{2}_{2,1}}^h,\\
			\mathcal{D}(t):=&\norm{(\aa,\u,{\ta})(t)}_{\dot{B}^{2}_{2,1}}^{\ell}
			+\norm{\aa(t)}_{\dot{B}^{3}_{2,1}}^h
			+\norm{\u(t)}_{\dot{B}^{4}_{2,1}}^h+\norm{\ta(t)}_{\dot{B}^{5}_{2,1}}^h.
		\end{split}
	\end{equation}
	
	\subsection{Low-frequency estimates of $(\aa,{\u},{\ta})$}
	In this subsection, we establish the dissipative estimates of $(\aa,{\u},{\ta})$ in the low-frequency region.
	\begin{lemma}\label{dipinlemma}
		For any $t>0$, it holds that \begin{equation}\label{han200}
			\begin{split}
				&\|(\aa,{\u},{\ta})\|_{\widetilde{L}^{\infty}_t(\dot{B}^{0}_{2,1})}^{\ell}
				+\|(\aa,{\u},{\ta})\|_{L^1_t(\dot{B}^{2}_{2,1})}^{\ell}\lesssim\|(\aa_0,{\u}_0,{\ta}_0)\|_{\dot{B}^{0}_{2,1}}^{\ell}
				+\int_{0}^{t}(1+\mathcal{E}(\tau))\mathcal{E}(\tau)\mathcal{D}(\tau)\,d\tau.
			\end{split}
		\end{equation}
	\end{lemma}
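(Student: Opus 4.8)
The plan is to apply a standard Littlewood--Paley block analysis to the linearized system \eqref{linequ} restricted to low frequencies, treating the right-hand sides $F_1,F_2,F_3$ as source terms. For each dyadic block $\dot\Delta_j(\varphi,\u,\ta)$ with $j\leq j_0$ (the low-frequency cutoff), I would localize \eqref{linequ} and look for the dissipative structure. The key observation is that the linear part of \eqref{linequ} is exactly that of the barotropic/full compressible Navier--Stokes system: $\u$ and $\ta$ are parabolic, while $\varphi$ satisfies a damped-wave-type coupling with $\div\u$. The classical route is to introduce an ``effective'' combination (a mixed energy functional adding a small multiple of the cross term $\langle\nabla\varphi,\u\rangle$ type quantity, and similarly a $\ta$--$\varphi$ cross term) so that the total Lyapunov functional $\mathcal L_j^2 \approx \|\dot\Delta_j(\varphi,\u,\ta)\|_{L^2}^2$ satisfies $\frac{d}{dt}\mathcal L_j + c\,2^{2j}\mathcal L_j \lesssim \|\dot\Delta_j(F_1,F_2,F_3)\|_{L^2}$ for $j\leq j_0$. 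Integrating in time and using Grönwall (or just direct integration since $2^{2j}\gtrsim 2^{2j}$ on low frequencies is bounded below in a trivial way and one gets genuine $L^1_t$ control of $2^{2j}$ times the norm), then multiplying by $2^{0\cdot j}=1$ and summing over $j\leq j_0$, yields precisely the left-hand side of \eqref{han200}.

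More concretely, the steps in order: (i) apply $\dot\Delta_j$ to the three equations of \eqref{linequ}; (ii) take $L^2$ inner products of the $\varphi$-equation with $\dot\Delta_j\varphi$, the $\u$-equation with $\dot\Delta_j\u$, the $\ta$-equation with $\dot\Delta_j\ta$, and also the $\varphi$-equation with $\dot\Delta_j\div\u$ (and $\u$-equation with $\dot\Delta_j\nabla\varphi$) to extract the damping on $\varphi$ via Bernstein's inequality; (iii) form the combination $\mathcal L_j^2 := \|\dot\Delta_j(\varphi,\u,\ta)\|_{L^2}^2 + \eta\,2^{-j}\big(\langle \dot\Delta_j\nabla\varphi,\dot\Delta_j\u\rangle + \langle\dot\Delta_j\nabla\ta,\dot\Delta_j\u\rangle\big)$ for a small fixed $\eta>0$, note $\mathcal L_j\approx \|\dot\Delta_j(\varphi,\u,\ta)\|_{L^2}$ since the cross terms carry a factor $2^{-j}\,2^j$ in magnitude times $\eta$; (iv) establish the differential inequality $\frac{d}{dt}\mathcal L_j + c\,2^{2j}\mathcal L_j \lesssim \|\dot\Delta_j(F_1,F_2,F_3)\|_{L^2}$, using Bernstein $\|\dot\Delta_j\nabla f\|_{L^2}\approx 2^j\|\dot\Delta_j f\|_{L^2}$ throughout; (v) integrate, multiply by $2^{0j}$, sum over $j\leq j_0$, which gives $\|(\aa,\u,\ta)\|_{\widetilde L^\infty_t(\dot B^0_{2,1})}^\ell + \|(\aa,\u,\ta)\|_{L^1_t(\dot B^2_{2,1})}^\ell \lesssim \|(\aa_0,\u_0,\ta_0)\|_{\dot B^0_{2,1}}^\ell + \sum_{j\leq j_0}\int_0^t\|\dot\Delta_j(F_1,F_2,F_3)\|_{L^2}\,d\tau = \|(\aa_0,\u_0,\ta_0)\|_{\dot B^0_{2,1}}^\ell + \|(F_1,F_2,F_3)\|_{L^1_t(\dot B^0_{2,1})}^\ell$.

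The remaining work is to bound $\|(F_1,F_2,F_3)\|_{L^1_t(\dot B^0_{2,1})}^\ell$ by $\int_0^t(1+\mathcal E(\tau))\mathcal E(\tau)\mathcal D(\tau)\,d\tau$. This is where the bulk of the routine-but-careful estimation lies, and it is the step I expect to be the main obstacle. Every term in $F_1,F_2,F_3$ is at least quadratic, of one of the forms $\u\cdot\nabla g$, $g\,\div\u$, $I(a)\Delta\ta$, $I(a)\nabla\varphi$, $I(a)\Delta\u$, or $D\u\otimes D\u$. For the low-frequency $\dot B^0_{2,1}$ norm I would use the standard paraproduct and remainder decomposition together with the product law $\|fg\|_{\dot B^0_{2,1}}^\ell \lesssim \|f\|_{\dot B^{s_1}_{2,1}}\|g\|_{\dot B^{s_2}_{2,1}}$ with appropriate exponents (in $\R^2$, e.g. $s_1+s_2=1$ with $s_1,s_2\leq 1$), splitting each factor into low and high frequencies and matching with the definitions of $\mathcal E(t)$ and $\mathcal D(t)$ in \eqref{normdef}: one factor provides an $\mathcal E$-type (time-sup) bound and the other a $\mathcal D$-type ($L^1_t$) bound, so the time integral becomes $\int_0^t \mathcal E(\tau)\mathcal D(\tau)\,d\tau$. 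For terms carrying $I(a)$, I would use the composition estimate of Lemma \ref{fuhe} (valid thanks to \eqref{eq:smalla}) to absorb $I(a)$ at the cost of a factor $\|a\|_{\dot B^1_{2,1}}$-type quantity controlled by $\mathcal E$, and the extra $(1+\mathcal E(\tau))$ factor accommodates the higher-order contributions from the composition series. Care is needed because $(a,b)$ carry no dissipation, so whenever $a$ or $b$ (or $\varphi$) appears it must be the factor placed in the $\widetilde L^\infty_t$ slot and never in the $L^1_t$ slot; the structural arithmetic of the regularity indices $(\alpha,\beta,\gamma)=(3,2,3)$ flagged in the introduction is precisely what makes all these products close.
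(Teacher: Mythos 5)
Your overall architecture is exactly the paper's: localize \eqref{linequ} with $\ddj$, combine the basic $L^2$ energy identity (with the weight $\tfrac12$ on the $\varphi$-equation so the coupling terms cancel) with the cross-term identity obtained by testing the $\varphi$-equation against $-\div\ddj\u$ and the $\u$-equation against $\nabla\ddj\varphi$, deduce $\frac{d}{dt}\mathcal L_j+c2^{2j}\mathcal L_j\lesssim\|\ddj(F_1,F_2,F_3)\|_{L^2}$, integrate, sum over $j\le 0$, and then close the nonlinear terms with the product law \eqref{daishu} and the composition estimate for $I(a)$ (the factor $(1+\mathcal E)$ coming from $I(a)=a-aI(a)$, exactly as in the paper). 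The nonlinear estimates you sketch are the right ones and do close in the indicated functional setting.

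There is, however, one concrete slip in step (iii)--(iv): the weight $\eta\,2^{-j}$ on the cross term is the \emph{high-frequency} normalization and does not work uniformly over the low frequencies. While it does make $\mathcal L_j\approx\|\ddj(\varphi,\u,\ta)\|_{L^2}$ for every $j$ (as you note), the time derivative of $\eta\,2^{-j}\langle\nabla\ddj\varphi,\ddj\u\rangle$ produces, besides the good term $\eta\,2^{-j}\|\nabla\ddj\varphi\|_{L^2}^2$, the bad term $-2\eta\,2^{-j}\|\div\ddj\u\|_{L^2}^2\approx-2\eta\,2^{-j}\|\nabla\ddj\u\|_{L^2}^2$, which must be absorbed by the genuine dissipation $\|\nabla\ddj\u\|_{L^2}^2$; this requires $\eta\,2^{-j}\ll1$ and hence fails for $j\to-\infty$ no matter how small the fixed $\eta$ is (testing with a purely gradient $\ddj\u$ and $\varphi_j=\ta_j=0$ makes the candidate dissipation functional negative for $j$ sufficiently negative). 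The correct choice at low frequencies — and what the paper does in \eqref{qi7}--\eqref{qi10} — is a plain small constant $\eta_1$ with no $2^{-j}$: since $j\le0$, Bernstein gives $|\langle\nabla\ddj\varphi,\ddj\u\rangle|\lesssim 2^{j}\|\ddj\varphi\|_{L^2}\|\ddj\u\|_{L^2}\le\|\ddj\varphi\|_{L^2}\|\ddj\u\|_{L^2}$, so the norm equivalence still holds, the bad term is now $-2\eta_1\|\div\ddj\u\|_{L^2}^2$ and is absorbed for $\eta_1$ small, and the gained dissipation $\eta_1\|\nabla\ddj\varphi\|_{L^2}^2\approx\eta_1 2^{2j}\|\ddj\varphi\|_{L^2}^2$ has exactly the parabolic scaling needed for the $L^1_t(\dot B^2_{2,1})$ bound on $\varphi^\ell$. (Your additional $\ta$--$\u$ cross term is unnecessary — $\ta$ is already parabolic through $-\Delta\ta$ — and with the $2^{-j}$ weight it suffers the same absorption problem.) With this one correction your argument coincides with the paper's proof.
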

	\begin{proof}
		For any $j\in\mathbb{Z}$, applying the operator $\dot{\Delta}_j$ to $\eqref{linequ}$, we get\begin{equation}\label{frelocal}
			\begin{cases}
				\partial_t\dot{\Delta}_j\varphi+2\div \dot{\Delta}_j\u=\dot{\Delta}_j{F_1},\\
				\partial_t\dot{\Delta}_j\u-\Delta\dot{\Delta}_j\u+\nabla\dot{\Delta}_j \varphi +\nabla\dot{\Delta}_j\ta=\dot{\Delta}_j{F_2},\\
				\partial_t\dot{\Delta}_j\theta-\Delta\dot{\Delta}_j\theta+ \div \dot{\Delta}_j\u=\dot{\Delta}_j{F_3}.
			\end{cases}
		\end{equation}
		Taking the $L^2$ inner product of $\eqref{frelocal}_1$, $\eqref{frelocal}_2$ and $\eqref{frelocal}_3$ with $\frac{1}{2}\ddj \varphi$, $\ddj \u$ and $\ddj \ta$ respectively, and adding up the resulting equations, we have\begin{equation}\label{qi4}
			\begin{split}
				&\frac 12 \frac{d}{dt}(\frac{1}{2}\|\ddj \varphi\|^2_{L^2}+\|\ddj \u\|^2_{L^2}+\|\ddj \theta\|^2_{L^2} )+\|\nabla\ddj \u\|^2_{L^2}+\|\nabla\ddj \theta\|^2_{L^2}\\
				&\quad=\frac{1}{2}\int_{\R^2}\ddj {F_1}\cdot \ddj \varphi\,dx+\int_{\R^2}\ddj {F_2}\cdot \ddj \u\,dx+\int_{\R^2}\ddj {F_3}\cdot \ddj \theta\,dx,
			\end{split}
		\end{equation}where we have used the following cancellation:\begin{equation}
			\begin{split}
				\int_{\R^2} \div\ddj \u\cdot \ddj \varphi\,dx+\int_{\R^2}\div\ddj  \u\cdot \ddj \theta\,dx+\int_{\R^2}\nabla\ddj(\varphi+\theta)\cdot \ddj \u\,dx=0.
			\end{split}
		\end{equation}
		In order to obtain the dissipation of $\varphi$, testing $\eqref{frelocal}_1$  by $-\div\ddj \u$ and $\eqref{frelocal}_2$  by $\nabla\ddj \varphi$, and using the integration by parts, we get\begin{equation}\label{qi7}
			\begin{split}
				& \frac{d}{dt}\left(\int_{\R^2}\ddj  \u\cdot \nabla \ddj  \varphi\,dx\right)+\|\nabla \ddj  \varphi\|^2_{L^2}-2\|\ddj \div \u\|^2_{L^2}+\int_{\R^2}(\nabla\ddj\theta -\Delta\dot{\Delta}_j\u)\cdot \nabla \ddj  \varphi\,dx\\
				&\quad=\int_{\R^2}\nabla\ddj{F_1}\cdot \ddj \u\,dx+\int_{\R^2}\ddj {F_2}\cdot \nabla\ddj \varphi\,dx.
			\end{split}
		\end{equation}Multiplying \eqref{qi7} by a small constant $\eta_1>0$ (to be determined) and then adding to \eqref{qi4}, we are led to\begin{equation}\label{qi10}
			\begin{split}
				\frac{d}{dt}{\mathcal{L}}_{\ell,j}^2+\widetilde{\mathcal{L}}_{\ell,j}^2=&\frac{1}{2}\int_{\R^2}\ddj {F_1}\cdot \ddj \varphi\,dx+\int_{\R^2}\ddj {F_2}\cdot \ddj \u\,dx+\int_{\R^2}\ddj {F_3}\cdot \ddj \theta\,dx\\
				&-\eta_1\Big(\int_{\R^2}\nabla\ddj{F_1}\cdot \ddj \u\,dx+\int_{\R^2}\ddj {F_2}\cdot \nabla\ddj \varphi\,dx\Big),
			\end{split}
		\end{equation}where\begin{equation}
			\begin{split}
				\mathcal{L}_{\ell,j}^2&:=\frac{1}{4}\|\ddj \varphi\|^2_{L^2}+\frac{1}{2}\|\ddj\u\|^2_{L^2}+\frac{1}{2}\|\ddj\theta\|^2_{L^2}+\eta_1\int_{\R^2}\ddj  \u\cdot \nabla \ddj  \varphi\,dx,\\
				\widetilde{\mathcal{L}}_{\ell,j}^2&:=\|\nabla\ddj\u\|^2_{L^2}+\|\nabla\ddj\theta\|^2_{L^2}\\&\qquad+\eta_1\Big(\|\nabla \ddj  \varphi\|^2_{L^2}-2\|\ddj \div \u\|^2_{L^2}+\int_{\R^2}(\nabla\ddj\theta -\Delta\dot{\Delta}_j\u)\cdot \nabla \ddj  \varphi\,dx\Big).
			\end{split}
		\end{equation}
		According to Berstein's inequality, we can choose $\eta_1$ sufficiently small such that for any $j\leq 0$,\begin{equation}
			\begin{split}
				\mathcal{L}_{\ell,j}^2&\approx \|(\dot{\Delta}_j\varphi,\dot{\Delta}_j\u,\dot{\Delta}_j\theta)\|_{L^2}^2,\\
				\widetilde{\mathcal{L}}_{\ell,j}^2&\approx \|(\nabla\dot{\Delta}_j\varphi,\nabla\dot{\Delta}_j\u,\nabla\dot{\Delta}_j\theta)\|_{L^2}^2\approx 2^{2j}\|(\dot{\Delta}_j\varphi,\dot{\Delta}_j\u,\dot{\Delta}_j\theta)\|_{L^2}^2.
			\end{split}
		\end{equation}
		Therefore, using H\"{o}lder's inequality in \eqref{qi10} gives that  for any $j\leq 0$,
		\begin{equation}
			\frac{d}{dt}{\mathcal{L}}_{\ell,j}^2+2^{2j}{\mathcal{L}}_{\ell,j}^2\lesssim \|(\ddj {F_1},\ddj {F_2},\ddj {F_3})\|_{L^2}{\mathcal{L}}_{\ell,j},
		\end{equation}whence, dividing by ${\mathcal{L}}_{\ell,j}$ and integrating on $[0,t]$,\begin{equation}\label{qi12}
			\begin{split}
				{\mathcal{L}}_{\ell,j}(t)+2^{2j}\int_{0}^{t}{\mathcal{L}}_{\ell,j}\,d\tau\lesssim  {\mathcal{L}}_{\ell,j}(0)+\int_{0}^{t}\|(\ddj {F_1},\ddj {F_2},\ddj {F_3})\|_{L^2}\,d\tau.
			\end{split}
		\end{equation}
		Finally, taking the supremum of \eqref{qi12} on $[0,t]$, and
		summing up on $j\leq 0$, we can conclude that\begin{equation}\label{ping3-1}
			\begin{split}
				&\|(\aa,{\u},{\ta})\|_{\widetilde{L}^{\infty}_t(\dot{B}^{0}_{2,1})}^{\ell}
				+\|(\aa,{\u},{\ta})\|_{L^1_t(\dot{B}^{2}_{2,1})}^{\ell}\lesssim\|(\aa_0,{\u}_0,{\ta}_0)\|_{\dot{B}^{0}_{2,1}}^{\ell}
				+\int_{0}^{t}\|({F_1},{F_2},{F_3})\|_{\dot{B}^{0}_{2,1}}^{\ell}\,d\tau.
			\end{split}
		\end{equation}\par
		Next, we bound the nonlinear terms in the right-hand side above. From \eqref{lhembedding} and \eqref{normdef}, one may check that\begin{equation}\label{jynl}
			\begin{cases}
				\|(\varphi,\theta,a,b)(t)\|_{\dot{B}^{\alpha}_{2,1}}\lesssim \mathcal{E}(t)\quad \text{for}\quad 0\leq\alpha\leq 3,\\
				\|\u(t)\|_{\dot{B}^{\alpha}_{2,1}}\lesssim \mathcal{E}(t)\quad \text{for}\quad 0\leq\alpha\leq 2,\qquad
				\|\varphi(t)\|_{\dot{B}^{\alpha}_{2,1}}\lesssim \mathcal{D}(t)\quad \text{for}\quad 2\leq\alpha\leq 3,\\
				\|\u(t)\|_{\dot{B}^{\alpha}_{2,1}}\lesssim \mathcal{D}(t)\quad \text{for}\quad 2\leq\alpha\leq 4,\qquad
				\|\theta(t)\|_{\dot{B}^{\alpha}_{2,1}}\lesssim \mathcal{D}(t)\quad \text{for}\quad 2\leq\alpha\leq 5.
			\end{cases}
		\end{equation}To simplify calculations, we will repeatedly use the estimates in \eqref{jynl} to control the nonlinear terms. Resorting to \eqref{daishu} and \eqref{jynl}, we have\begin{equation}\label{jiayou1}
			\begin{split}
				\|(\u\cdot\nabla\varphi,\u\cdot\nabla\u,\u\cdot\nabla\theta)\|_{\dot{B}^{0}_{2,1}}^{\ell}&\lesssim \|\u\|_{\dot{B}^{0}_{2,1}}\|(\varphi,\u,\theta)\|_{\dot{B}^{2}_{2,1}}\lesssim \mathcal{E}(\tau)\mathcal{D}(\tau),\\
				\|(\varphi\div \u,\theta\div \u)\|_{\dot{B}^{0}_{2,1}}^{\ell}&\lesssim \|(\varphi,\theta)\|_{\dot{B}^{0}_{2,1}}\|\u\|_{\dot{B}^{2}_{2,1}}\lesssim \mathcal{E}(\tau)\mathcal{D}(\tau).
			\end{split}
		\end{equation}
		To handle the composition function $I(a)$, we infer from Lemma \ref{fuhe} and \eqref{eq:smalla} that\begin{equation}\label{ag1}
			\begin{split}
				\|I(a)\|_{\dot{B}^\alpha_{2,1}}\lesssim \|a\|_{\dot{B}^\alpha_{2,1}}\quad\text{for}\quad0<\alpha\leq 3.
			\end{split}
		\end{equation}Due to the fact that $I(a)=a-aI(a)$, using \eqref{daishu} in addition, we also find that\begin{equation}
			\begin{split}
				\|I(a)\|_{\dot{B}^0_{2,1}}&\lesssim\|a\|_{\dot{B}^0_{2,1}}+\|a\|_{\dot{B}^0_{2,1}}\|I(a)\|_{\dot{B}^1_{2,1}}\\
				&\lesssim (1+\|a\|_{\dot{B}^1_{2,1}})\|a\|_{\dot{B}^0_{2,1}}.
			\end{split}
		\end{equation}As a result, applying \eqref{daishu} and \eqref{jynl} to the nonlinear terms involving $I(a)$, we obtain\begin{equation}\label{jiayou2}
			\begin{split}
				\|(I(a)\Delta\theta,I(a)\Delta\u)\|_{\dot{B}^{0}_{2,1}}^{\ell}&\lesssim \|I(a)\|_{\dot{B}^{1}_{2,1}}\|(\theta,\u)\|_{\dot{B}^{2}_{2,1}}\\
				&\lesssim \|a\|_{\dot{B}^{1}_{2,1}}\|(\theta,\u)\|_{\dot{B}^{2}_{2,1}}\lesssim \mathcal{E}(\tau)\mathcal{D}(\tau),\\
				\|(I(a)\nabla\varphi,I(a)\nabla\theta)\|_{\dot{B}^{0}_{2,1}}^{\ell}&\lesssim \|I(a)\|_{\dot{B}^{0}_{2,1}}\|(\varphi,\theta)\|_{\dot{B}^{2}_{2,1}}\\
				&\lesssim (1+\|a\|_{\dot{B}^{1}_{2,1}})\|a\|_{\dot{B}^{0}_{2,1}}\|(\varphi,\theta)\|_{\dot{B}^{2}_{2,1}}\\
				&\lesssim (1+\mathcal{E}(\tau))\mathcal{E}(\tau)\mathcal{D}(\tau),\\
				\|I(a)(2|D(\u)|^2&-(\div \u)^2)\|_{\dot{B}^{0}_{2,1}}^{\ell}+\Big\|\frac{1}{1+a}(2|D(\u)|^2-(\div \u)^2)\Big\|_{\dot{B}^{0}_{2,1}}^{\ell}\\
				&\lesssim (1+\|I(a)\|_{\dot{B}^{1}_{2,1}})\|D\u\otimes D\u\|_{\dot{B}^{0}_{2,1}}\\
				&\lesssim (1+\|a\|_{\dot{B}^{1}_{2,1}})\|\u\|_{\dot{B}^{1}_{2,1}}\|\u\|_{\dot{B}^{2}_{2,1}}\\
				&\lesssim (1+\mathcal{E}(\tau))\mathcal{E}(\tau)\mathcal{D}(\tau).
			\end{split}
		\end{equation}
		Plugging \eqref{jiayou1} and \eqref{jiayou2} into \eqref{ping3-1}, we end up with the desired estimate \eqref{han200}.
	\end{proof}
	
	\subsection{High-frequency estimates of $(\aa,{\u},{\ta})$} The dissipative estimates of $(\aa,{\u},{\ta})$ in the high-frequency region are derived in this subsection.
	\begin{lemma}\label{gaopinlemma}
		For any $t>0$, it holds that \begin{equation}\label{han201}
			\begin{split}
				&\|({\aa},\mathbf{\ta})\|_{\widetilde{L}^{\infty}_t(\dot{B}^{3}_{2,1})}^h+\|\u\|_{\widetilde{L}^{\infty}_t(\dot{B}^
					{2}_{2,1})}^h+\|{\aa}\|_{{L}^{1}_t(\dot{B}^{3}_{2,1})}^h
				+\|\u\|_{{L}^{1}_t(\dot{B}^{4}_{2,1})}^h+\|\mathbf{\ta}\|_{L^1_t(\dot B^{5}_{2,1})}^h\\
				&\quad\lesssim\|(\aa_0,{\ta}_0)\|_{\dot{B}^{3}_{2,1}}^{h}+\|{\u}_0\|_{\dot{B}^{2}_{2,1}}^{h}+\int_{0}^{t}(1+\mathcal{E}(\tau))\mathcal{E}(\tau)\mathcal{D}(\tau)\,d\tau.
			\end{split}
		\end{equation}
	\end{lemma}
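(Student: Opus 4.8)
The plan is to re-run the scheme of Lemma~\ref{dipinlemma} in the high-frequency band, where the dissipative structure changes character: $\aa$ becomes \emph{damped} (at an $O(1)$ rate) while $\u$ and $\ta$ are \emph{parabolically smoothed} by two derivatives. I would organise the argument around three blocks. First, \emph{heat estimates for $(\u,\ta)$}: applying $\ddj$ to the second and third equations of \eqref{linequ} and treating $\nabla\aa$, $\nabla\ta$, $\div\u$, $F_2$, $F_3$ as forcing (the coupling $\nabla\ddj\ta\cdot\ddj\u$ still cancels against $\div\ddj\u\cdot\ddj\ta$ at the $L^2$ level), the blockwise maximal-regularity estimate for the heat flow, multiplied by $2^{2j}$ (resp.\ $2^{3j}$) and summed over high frequencies, yields
\[
\|\u\|_{\widetilde{L}^{\infty}_t(\dot{B}^{2}_{2,1})}^{h}+\|\u\|_{L^{1}_t(\dot{B}^{4}_{2,1})}^{h}\lesssim\|\u_0\|_{\dot{B}^{2}_{2,1}}^{h}+\|\aa\|_{L^{1}_t(\dot{B}^{3}_{2,1})}^{h}+\|\ta\|_{L^{1}_t(\dot{B}^{3}_{2,1})}^{h}+\int_{0}^{t}\|F_2\|_{\dot{B}^{2}_{2,1}}^{h}\,d\tau,
\]
\[
\|\ta\|_{\widetilde{L}^{\infty}_t(\dot{B}^{3}_{2,1})}^{h}+\|\ta\|_{L^{1}_t(\dot{B}^{5}_{2,1})}^{h}\lesssim\|\ta_0\|_{\dot{B}^{3}_{2,1}}^{h}+\|\u\|_{L^{1}_t(\dot{B}^{4}_{2,1})}^{h}+\int_{0}^{t}\|F_3\|_{\dot{B}^{3}_{2,1}}^{h}\,d\tau.
\]

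Second, the \emph{damping of $\aa$}. Since the first line of \eqref{linequ} is a pure transport identity, the damping must come from the coupling with the momentum equation; I would encode this through the effective velocity $\w:=\u+\nabla(-\Delta)^{-1}\aa$ (harmless, being localised blockwise). A direct computation using \eqref{linequ} together with $\div\w=\div\u-\aa$ gives
\[
\partial_t\aa+2\aa=-2\div\w+F_1,\qquad
\partial_t\w-\Delta\w-2Q\w=-\nabla\ta+F_2+\nabla(-\Delta)^{-1}F_1-2\nabla(-\Delta)^{-1}\aa,
\]
where $Q$ is the projector onto gradient fields; the crucial gain is that the loss term $\nabla\aa$ has disappeared from the $\w$-equation, and $-\Delta-2Q$ is still parabolic on high frequencies (its symbol is bounded below by $\tfrac12|\xi|^2$ once $|\xi|$ is large). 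The $\aa$-equation is an ODE with damping, so dividing its $\ddj$-version by $\|\ddj\aa\|_{L^2}$, integrating, multiplying by $2^{3j}$ and summing gives
\[
\|\aa\|_{\widetilde{L}^{\infty}_t(\dot{B}^{3}_{2,1})}^{h}+\|\aa\|_{L^{1}_t(\dot{B}^{3}_{2,1})}^{h}\lesssim\|\aa_0\|_{\dot{B}^{3}_{2,1}}^{h}+\|\w\|_{L^{1}_t(\dot{B}^{4}_{2,1})}^{h}+\int_{0}^{t}\|F_1\|_{\dot{B}^{3}_{2,1}}^{h}\,d\tau,
\]
while the heat estimate of the first block applied to the $\w$-equation controls $\|\w\|^{h}_{\widetilde{L}^{\infty}_t(\dot{B}^{2}_{2,1})}+\|\w\|^{h}_{L^{1}_t(\dot{B}^{4}_{2,1})}$ by $\|\u_0\|^{h}_{\dot{B}^{2}_{2,1}}+\|\aa_0\|^{h}_{\dot{B}^{3}_{2,1}}+\int_0^t\|(F_1,F_2)\|^{h}_{\dot{B}^{3}_{2,1}}\,d\tau$ plus the terms $\|\aa\|^{h}_{L^{1}_t(\dot{B}^{3}_{2,1})}$ and $\|\ta\|^{h}_{L^{1}_t(\dot{B}^{5}_{2,1})}$ with a small prefactor, since $\nabla(-\Delta)^{-1}\aa$ and $\nabla\ta$ enter one (resp.\ two) derivatives below the leading order and hence come with a gain $\lesssim2^{-2j}$ on high frequencies. (An equivalent route, closer to Lemma~\ref{dipinlemma}, is to redo \eqref{qi4}--\eqref{qi7} at high frequency with the cross term $\int\ddj\u\cdot\nabla\ddj\aa$ weighted by $2^{-2j}$: the weight converts the otherwise-too-large term $\int\Delta\ddj\u\cdot\nabla\ddj\aa$ into something absorbed by $\|\nabla\ddj(\u,\ta)\|_{L^2}^2$ while producing the $O(1)$ damping $\|\ddj\aa\|_{L^2}^2$.)

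Third, \emph{closing the loop and the nonlinear terms}. Plugging the $\w$-bound into the $\aa$-bound, and using $\|\ta\|^{h}_{L^{1}_t(\dot{B}^{3}_{2,1})}\lesssim2^{-2J_0}\|\ta\|^{h}_{L^{1}_t(\dot{B}^{5}_{2,1})}$ throughout, the self-reference $\|\aa\|^{h}_{L^{1}_t(\dot{B}^{3}_{2,1})}$ acquires a small coefficient and is absorbed; feeding the resulting $\|\aa\|^{h}$-bound into the $\u$-bound and then into the $\ta$-bound, the surviving self-references $\|\u\|^{h}_{L^{1}_t(\dot{B}^{4}_{2,1})}$ and $\|\ta\|^{h}_{L^{1}_t(\dot{B}^{5}_{2,1})}$ come multiplied by constants below one (products of the above $2^{-2j}$-gains and the damping rate, controlled by the precise Littlewood--Paley bounds, after fixing the frequency threshold suitably) and are absorbed as well, leaving exactly $\|(\aa_0,\ta_0)\|^{h}_{\dot{B}^{3}_{2,1}}+\|\u_0\|^{h}_{\dot{B}^{2}_{2,1}}+\int_0^t\|(F_1,F_2,F_3)\|^{h}_{\dot{B}^{3}_{2,1}}\,d\tau$. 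Finally the nonlinear forcing is bounded just as in Lemma~\ref{dipinlemma}, via \eqref{daishu}, Lemma~\ref{fuhe}, the estimate \eqref{ag1} on $I(a)$, and commutator estimates for the transport pieces $\u\cdot\nabla(\aa,\u,\ta)$ (used to avoid a derivative loss on $\aa$, which carries no smoothing), always placing the non-dissipative factor (depending on $a$, $b$, $\aa$) in $\mathcal{E}$ and the parabolically regularised factor in $\mathcal{D}$ — for instance $\|I(a)\Delta\ta\|^{h}_{L^{1}_t(\dot{B}^{3}_{2,1})}\lesssim\|a\|_{L^{\infty}_t(\dot{B}^{1}_{2,1})}\|\ta\|_{L^{1}_t(\dot{B}^{5}_{2,1})}+\|a\|^{h}_{L^{\infty}_t(\dot{B}^{3}_{2,1})}\|\ta\|_{L^{1}_t(\dot{B}^{3}_{2,1})}$ and $\|D\u\otimes D\u\|^{h}_{L^{1}_t(\dot{B}^{3}_{2,1})}\lesssim\|\u\|_{L^{\infty}_t(\dot{B}^{2}_{2,1})}\|\u\|_{L^{1}_t(\dot{B}^{4}_{2,1})}$ — so that $\int_0^t\|(F_1,F_2,F_3)\|^{h}_{\dot{B}^{3}_{2,1}}\,d\tau\lesssim\int_0^t(1+\mathcal{E}(\tau))\mathcal{E}(\tau)\mathcal{D}(\tau)\,d\tau$, which is \eqref{han201}.

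I expect the main obstacle to be the second block together with the absorption in the third: obtaining a \emph{genuine} $O(1)$ damping for $\aa$ at high frequency and verifying that the feedback between this damping and the parabolic gain of $\u$ (through the $\nabla\aa$ forcing in the momentum equation) closes with a contractive constant — this is exactly what the effective-unknown reformulation (or the $2^{-2j}$-weighted cross term) is designed for, and it is the essential structural difference with the low-frequency Lemma~\ref{dipinlemma}. A secondary, bookkeeping difficulty is that, $a$ and $b$ carrying no dissipation, the products in $F_1,F_2,F_3$ must be split so that the undamped factor always sits in the non-time-integrated norm $\mathcal{E}$, which is precisely what forces the relatively high regularity index $3$ for the high-frequency space.
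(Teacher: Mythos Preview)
Your effective-velocity route through $\w=\u+\nabla(-\Delta)^{-1}\varphi$ is a valid alternative and would close, but it is not what the paper does. The paper stays with the original variables and, much as in your parenthetical remark, builds a high-frequency Lyapunov functional: it applies a commutator argument to $\nabla\ddj\varphi$ (not to $\ddj\varphi$) in the transport equation, derives the analogue of \eqref{qi7} with the extra piece $\tfrac14\|\nabla\ddj\varphi\|_{L^2}^2$, and adds a small multiple $\eta_2$ of this to the basic energy \eqref{qi4}. The resulting functional satisfies $\mathcal{L}^2_{h,j}\approx\|(\nabla\ddj\varphi,\ddj\u,\ddj\theta)\|_{L^2}^2$ and $\widetilde{\mathcal{L}}^2_{h,j}\gtrsim\mathcal{L}^2_{h,j}$ for $j\ge-1$, so the damping of $\varphi$ in $\dot B^3_{2,1}$ and an $L^1_t(\dot B^2_{2,1})$ bound on $(\u,\theta)$ come out \emph{together}, with no absorption loop to close; only afterwards are $\u$ and $\theta$ upgraded to $L^1_t(\dot B^4_{2,1})$ and $L^1_t(\dot B^5_{2,1})$ by separate heat estimates, exactly your first block. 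Your diagonalisation is conceptually cleaner but shifts the difficulty to the $\varphi\leftrightarrow\w$ feedback, which you correctly flag as the main obstacle.

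Two bookkeeping points to watch. First, your final display asks for $\|F_2\|^h_{\dot B^3_{2,1}}$, but this is neither needed (both the $\u$- and $\w$-heat estimates take $F_2$ at level $\dot B^2_{2,1}$) nor available: $\|I(a)\Delta\u\|_{\dot B^3_{2,1}}$ would demand $\|\u\|_{\dot B^5_{2,1}}\notin\mathcal D$. The paper accordingly estimates $F_2$ in $\dot B^2_{2,1}$ and only $F_3,F_4,F_5$ in $\dot B^3_{2,1}$. Second, the transport piece $\u\cdot\nabla\varphi$ inside $F_1$ cannot be estimated as a source in $\dot B^3_{2,1}$ (that would cost $\|\varphi\|_{\dot B^4_{2,1}}$); in your framework the fix is to rewrite the $\varphi$-equation as the damped transport $\partial_t\varphi+\u\cdot\nabla\varphi+2\varphi=-2\div\w+F_4$ and run the commutator estimate there, rather than treating $F_1$ as a black-box forcing.
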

	\begin{proof}
		By using a commutator argument to $\eqref{frelocal}_1$ and  $\eqref{frelocal}_2$, one may see that\begin{equation}\label{newform}
			\begin{cases}
				\partial_t\nabla\dot{\Delta}_j\varphi+2\nabla\div \dot{\Delta}_j\u=-\u\cdot\nabla\nabla\dot{\Delta}_j\varphi-[\dot{\Delta}_j,\u\cdot\nabla]\nabla\varphi-\dot{\Delta}_j(\nabla\u\cdot\nabla\varphi)+\nabla\dot{\Delta}_j{F_4},\\
				\partial_t\dot{\Delta}_j\u-\Delta\dot{\Delta}_j\u+\nabla\dot{\Delta}_j \varphi +\nabla\dot{\Delta}_j\ta=-\u\cdot\nabla\dot{\Delta}_j\u-[\dot{\Delta}_j,\u\cdot\nabla]\u+\dot{\Delta}_j{F_5},
			\end{cases}
		\end{equation}where\begin{equation}\label{jihao}
			\begin{split}
				{F_4}&:=-2\varphi\div \u-\theta\div\u+ I(a)\Delta\theta+I(a)(2|D(\u)|^2-(\div \u)^2),\\
				{F_5}&:=I(a)\nabla\varphi+I(a)\nabla\theta-I(a)\Delta\u.
			\end{split}
		\end{equation}
		By means of a standard energy argument, it is easy to show from \eqref{newform} that\begin{equation}\label{qi888}
			\begin{split}
				& \frac{d}{dt}\left(\frac{1}{4}\|\nabla\dot{\Delta}_j\varphi\|^2_{L^2}+\int_{\R^2}\ddj  \u\cdot \nabla \ddj  \varphi\,dx\right)+\|\nabla \ddj  \varphi\|^2_{L^2}\\&\quad-2\|\ddj \div \u\|^2_{L^2}+\int_{\R^2}\nabla\ddj\theta \cdot \nabla \ddj  \varphi\,dx\\
				&=\int_{\R^2}(\dot{\Delta}_j\u\cdot\nabla\dot{\Delta}_j\varphi+\frac{1}{2}|\nabla\dot{\Delta}_j\varphi|^2)\div \u \, dx-\int_{\R^2}[\dot{\Delta}_j,\u\cdot\nabla]\nabla\varphi\cdot (\dot{\Delta}_j\u+\frac{1}{2}\nabla\dot{\Delta}_j\varphi)\,dx\\
				&\quad-\int_{\R^2}[\dot{\Delta}_j,\u\cdot\nabla]\u\cdot\nabla\dot{\Delta}_j\varphi\,dx-\int_{\R^2}\dot{\Delta}_j(\nabla\u\cdot\nabla\varphi)\cdot (\dot{\Delta}_j\u+\frac{1}{2}\nabla\dot{\Delta}_j\varphi)\,dx\\
				&\quad+\int_{\R^2}\nabla\dot{\Delta}_j{F}_4\cdot(\dot{\Delta}_j\u+\frac{1}{2}\nabla\dot{\Delta}_j\varphi)\,dx+\int_{\R^2}\dot{\Delta}_j{F}_5\cdot\nabla\dot{\Delta}_j\varphi\,dx,
			\end{split}
		\end{equation}
		where we have used the following integration by parts:\begin{equation}
			\begin{split}
				&\int_{\R^2}\nabla\div \dot{\Delta}_j\u\cdot \nabla\dot{\Delta}_j\varphi\,dx=\int_{\R^2}\Delta\dot{\Delta}_j\u\cdot\nabla\dot{\Delta}_j\varphi\,dx,\\
				&\int_{\R^2}(\u\cdot\nabla\nabla\dot{\Delta}_j\varphi)\cdot\nabla\dot{\Delta}_j\varphi\,dx=-\frac{1}{2}\int_{\R^2}|\nabla\dot{\Delta}_j\varphi|^2\div \u \, dx,\\
				&\int_{\R^2}(\u\cdot\nabla\nabla\dot{\Delta}_j\varphi)\cdot\dot{\Delta}_j\u\,dx+\int_{\R^2}(\u\cdot\nabla\dot{\Delta}_j\u)\cdot\nabla\dot{\Delta}_j\varphi\,dx=-\int_{\R^2}(\dot{\Delta}_j\u\cdot\nabla\dot{\Delta}_j\varphi)\div \u \, dx.
			\end{split}
		\end{equation}
		Multiplying \eqref{qi888} by a small constant $\eta_2>0$ (to be determined) and then adding to \eqref{qi4}, we have\begin{equation}\label{qi999}
			\begin{split}
				\frac{d}{dt}&{\mathcal{L}}_{h,j}^2+\widetilde{\mathcal{L}}_{h,j}^2=\frac{1}{2}\int_{\R^2}\ddj {F_1}\cdot \ddj \varphi\,dx+\int_{\R^2}\ddj{F_2}\cdot \ddj \u\,dx+\int_{\R^2}\ddj{F_3}\cdot \ddj \theta\,dx\\
				&+\eta_2\Big(\int_{\R^2}(\dot{\Delta}_j\u\cdot\nabla\dot{\Delta}_j\varphi+\frac{1}{2}|\nabla\dot{\Delta}_j\varphi|^2)\div \u \, dx-\int_{\R^2}[\dot{\Delta}_j,\u\cdot\nabla]\nabla\varphi\cdot (\dot{\Delta}_j\u+\frac{1}{2}\nabla\dot{\Delta}_j\varphi)\,dx\\
				&\quad-\int_{\R^2}[\dot{\Delta}_j,\u\cdot\nabla]\u\cdot\nabla\dot{\Delta}_j\varphi\,dx-\int_{\R^2}\dot{\Delta}_j(\nabla\u\cdot\nabla\varphi)\cdot (\dot{\Delta}_j\u+\frac{1}{2}\nabla\dot{\Delta}_j\varphi)\,dx\\
				&\quad+\int_{\R^2}\nabla\dot{\Delta}_j{F}_4\cdot(\dot{\Delta}_j\u+\frac{1}{2}\nabla\dot{\Delta}_j\varphi)\,dx+\int_{\R^2}\dot{\Delta}_j{F}_5\cdot\nabla\dot{\Delta}_j\varphi\,dx\Big),
			\end{split}
		\end{equation}where\begin{equation}
			\begin{split}
				\mathcal{L}_{h,j}^2&:=\frac{1}{4}\|\ddj \varphi\|^2_{L^2}+\frac{1}{2}\|\ddj\u\|^2_{L^2}+\frac{1}{2}\|\ddj\theta\|^2_{L^2}+\frac{\eta_2}{4}\|\nabla\ddj \varphi\|^2_{L^2}+\eta_2\int_{\R^2}\ddj  \u\cdot \nabla \ddj  \varphi\,dx,\\
				\widetilde{\mathcal{L}}_{h,j}^2&:=\|\nabla\ddj\u\|^2_{L^2}+\|\nabla\ddj\theta\|^2_{L^2}\\&\qquad+\eta_2\Big(\|\nabla \ddj  \varphi\|^2_{L^2}-2\|\ddj \div \u\|^2_{L^2}+\int_{\R^2}\nabla\ddj\theta \cdot \nabla \ddj  \varphi\,dx\Big).
			\end{split}
		\end{equation}Thanks to Berstein's inequality, taking $\eta_2$ small enough can ensure that for any $j\geq -1$,\begin{equation}
			\begin{split}
				\mathcal{L}_{h,j}^2
				&\approx \|(\nabla\dot{\Delta}_j\varphi,\dot{\Delta}_j\u,\dot{\Delta}_j\theta)\|_{L^2}^2,\\
				\widetilde{\mathcal{L}}_{h,j}^2&  \approx \|(\nabla\dot{\Delta}_j\varphi,\nabla\dot{\Delta}_j\u,\nabla\dot{\Delta}_j\theta)\|_{L^2}^2\gtrsim \|(\nabla\dot{\Delta}_j\varphi,\dot{\Delta}_j\u,\dot{\Delta}_j\theta)\|_{L^2}^2.
			\end{split}
		\end{equation}
		Consequently, using H\"{o}lder's inequality in \eqref{qi999} indicates that for any $j\geq -1$,\begin{equation}\label{qi123}
			\begin{split}
				\frac{d}{dt}{\mathcal{L}}_{h,j}^2+{\mathcal{L}}_{h,j}^2\lesssim\Big(& \|\div\u\|_{L^\infty}\|\nabla\dot{\Delta}_j\varphi\|_{L^2}+\|\dot{\Delta}_j(\nabla\u\cdot\nabla\varphi)\|_{L^2}+\|[\dot{\Delta}_j,\u\cdot\nabla]\nabla\varphi\|_{L^2}\\
				&+\|[\dot{\Delta}_j,\u\cdot\nabla]\u\|_{L^2}+\|(\ddj {F_1},\ddj {F_2},\ddj {F_3},\nabla\ddj {F_4},\ddj {F_5})\|_{L^2}\Big){\mathcal{L}}_{h,j}.
			\end{split}
		\end{equation}
		Following similar arguments as in \eqref{qi12}-\eqref{ping3-1}, and using the fact that $F_1=F_4+\u\cdot\nabla\varphi$ and $\|({F_3},{F_4},{F_5})\|_{\dot{B}^{2}_{2,1}}^{h}\lesssim \|({F_3},{F_4},{F_5})\|_{\dot{B}^{3}_{2,1}}^{h}$, we can deduce\begin{equation}\label{estil1}
			\begin{split}
				\|\aa&\|_{\widetilde{L}^{\infty}_t(\dot{B}^{3}_{2,1})}^{h}+\|({\u},{\ta})\|_{\widetilde{L}^{\infty}_t(\dot{B}^{2}_{2,1})}^{h}
				+\|\aa\|_{L^1_t(\dot{B}^{3}_{2,1})}^{h}+\|({\u},{\ta})\|_{L^1_t(\dot{B}^{2}_{2,1})}^{h}\\
				&\lesssim\|\aa_0\|_{\dot{B}^{3}_{2,1}}^{h}+\|({\u}_0,{\ta}_0)\|_{\dot{B}^{2}_{2,1}}^{h}+\mathcal{N}(t),
			\end{split}
		\end{equation}where\begin{equation}
			\begin{split}
				\mathcal{N}(t):=&\int_{0}^{t}\|\nabla\u\|_{L^\infty}\|\varphi\|_{\dot{B}^{3}_{2,1}}^h\,d\tau+\int_{0}^{t}\|\nabla\u\cdot\nabla\varphi\|_{\dot{B}^{2}_{2,1}}^{h}\,d\tau+\int_{0}^{t}\|\u\cdot\nabla\varphi\|_{\dot{B}^{2}_{2,1}}^{h}\,d\tau\\
				&+\int_{0}^{t}\sum_{j\geq -1}2^{2j}\|[\dot{\Delta}_j,\u\cdot\nabla]\nabla\varphi\|_{L^2}\,d\tau+\int_{0}^{t}\sum_{j\geq -1}2^{2j}\|[\dot{\Delta}_j,\u\cdot\nabla]\u\|_{L^2}\,d\tau
				\\&+\int_{0}^{t}\|{F_2}\|_{\dot{B}^{2}_{2,1}}^{h}\,d\tau+\int_{0}^{t}\|({F_3},{F_4},{F_5})\|_{\dot{B}^{3}_{2,1}}^{h}\,d\tau.
			\end{split}
		\end{equation}
		To recover the parabolic smoothing effect of $\u$, it follows from $\eqref{frelocal}_2$ and $\eqref{frelocal}_3$ that\begin{equation}\label{similar1}
			\begin{split}
				&\frac{1}{2}\frac{d}{dt}(\|\ddj \u\|^2_{L^2}+\|\ddj \theta\|^2_{L^2})+\|\nabla\ddj \u\|^2_{L^2}+\|\nabla\ddj \theta\|^2_{L^2}\\&\quad\lesssim \|(\nabla\dot{\Delta}_j\varphi,\dot{\Delta}_j{F}_2)\|_{L^2}\|\ddj \u\|_{L^2}+\|\dot{\Delta}_j{F}_3\|_{L^2}\|\ddj \theta\|_{L^2},
			\end{split}
		\end{equation}
		from which and Berstein's inequality, we further have\begin{equation}\label{estil2}
			\begin{split}
				\|({\u},{\ta})\|_{\widetilde{L}^{\infty}_t(\dot{B}^{2}_{2,1})}^{h}+\|({\u},{\ta})\|_{L^1_t(\dot{B}^{4}_{2,1})}^{h}\lesssim \|({\u}_0,{\ta}_0)\|_{\dot{B}^{2}_{2,1}}^{h}+\|\varphi\|_{L_t^1(\dot{B}^{3}_{2,1})}^{h}+\int_{0}^{t}\|({F_2},{F_3})\|_{\dot{B}^{2}_{2,1}}^{h}\,d\tau.
			\end{split}
		\end{equation}Combining \eqref{estil1} and \eqref{estil2} leads to\begin{equation}\label{estil3}
			\begin{split}
				\|\aa&\|_{\widetilde{L}^{\infty}_t(\dot{B}^{3}_{2,1})}^{h}+\|({\u},{\ta})\|_{\widetilde{L}^{\infty}_t(\dot{B}^{2}_{2,1})}^{h}
				+\|\aa\|_{L^1_t(\dot{B}^{3}_{2,1})}^{h}+\|({\u},{\ta})\|_{L^1_t(\dot{B}^{4}_{2,1})}^{h}\\
				&\lesssim\|\aa_0\|_{\dot{B}^{3}_{2,1}}^{h}+\|({\u}_0,{\ta}_0)\|_{\dot{B}^{2}_{2,1}}^{h}+\mathcal{N}(t).
			\end{split}
		\end{equation}Notice the parabolic structure of $\eqref{frelocal}_3$, arguing as \eqref{similar1}-\eqref{estil2}, we also have\begin{equation}\label{estil4}
			\begin{split}
				\|{\ta}\|_{\widetilde{L}^{\infty}_t(\dot{B}^{3}_{2,1})}^{h}+\|{\ta}\|_{L^1_t(\dot{B}^{5}_{2,1})}^{h}\lesssim \|{\ta}_0\|_{\dot{B}^{3}_{2,1}}^{h}+\|\u\|_{L_t^1(\dot{B}^{4}_{2,1})}^{h}+\int_{0}^{t}\|{F_3}\|_{\dot{B}^{3}_{2,1}}^{h}\,d\tau.
			\end{split}
		\end{equation}Thus, we eventually get from the combination of \eqref{estil3} and \eqref{estil4} that\begin{equation}\label{han20}
			\begin{split}
				&\|({\aa},\mathbf{\ta})\|_{\widetilde{L}^{\infty}_t(\dot{B}^{3}_{2,1})}^h+\|\u\|_{\widetilde{L}^{\infty}_t(\dot{B}^
					{2}_{2,1})}^h+\|{\aa}\|_{{L}^{1}_t(\dot{B}^{3}_{2,1})}^h
				+\|\u\|_{{L}^{1}_t(\dot{B}^{4}_{2,1})}^h+\|\mathbf{\ta}\|_{L^1_t(\dot B^{5}_{2,1})}^h\\
				&\quad\lesssim\|(\aa_0,{\ta}_0)\|_{\dot{B}^{3}_{2,1}}^{h}+\|{\u}_0\|_{\dot{B}^{2}_{2,1}}^{h}+\mathcal{N}(t).
			\end{split}
		\end{equation}
		\par
		Next, we turn to the nonlinear estimates for $\mathcal{N}(t)$. By virtue of \eqref{law}, \eqref{jynl} and the embedding $\dot{B}^{1}_{2,1}(\mathbb{R}^2)\hookrightarrow L^\infty(\mathbb{R}^2)$, we have\begin{equation}\label{nestimates}
			\begin{split}
				\|\nabla\u\|_{L^\infty}\|\varphi\|_{\dot{B}^{3}_{2,1}}^h+&\|\nabla\u\cdot\nabla\varphi\|_{\dot{B}^{2}_{2,1}}^{h}\lesssim \|\u\|_{\dot{B}^{2}_{2,1}}\|\varphi\|_{\dot B^{3}_{2,1}}+\|\varphi\|_{\dot B^{2}_{2,1}}\|\u\|_{\dot{B}^{3}_{2,1}}\lesssim \mathcal{E}(\tau)\mathcal{D}(\tau),  \\
				\|(\u\cdot\nabla\varphi,\u\cdot\nabla\u)\|_{\dot B^{2}_{2,1}}^h&\lesssim  \|\u\|_{\dot{B}^{1}_{2,1}}\|(\varphi,\u)\|_{\dot B^{3}_{2,1}}+\|(\varphi,\u)\|_{\dot B^{2}_{2,1}}\|\u\|_{\dot{B}^{2}_{2,1}}\lesssim \mathcal{E}(\tau)\mathcal{D}(\tau),\\
				\|\u\cdot\nabla\theta\|_{\dot B^{3}_{2,1}}^h&\lesssim  \|\u\|_{\dot{B}^{1}_{2,1}}\|\theta\|_{\dot B^{4}_{2,1}}+\|\theta\|_{\dot B^{2}_{2,1}}\|\u\|_{\dot{B}^{3}_{2,1}}\lesssim \mathcal{E}(\tau)\mathcal{D}(\tau),\\
				\|(\varphi\div\u,\theta\div\u)\|_{\dot B^{3}_{2,1}}^h&\lesssim
				\|(\varphi,\theta)\|_{\dot{B}^{1}_{2,1}}\|\u\|_{\dot B^{4}_{2,1}}+\|\u\|_{\dot B^{2}_{2,1}}\|(\varphi,\theta)\|_{\dot{B}^{3}_{2,1}}\lesssim \mathcal{E}(\tau)\mathcal{D}(\tau).
			\end{split}
		\end{equation}
		In addition to \eqref{ag1}, the nonlinear terms involving $I(a)$ can be bounded as\begin{equation}\label{nestimates2}
			\begin{split}
				\|I(a)\Delta\theta\|_{\dot B^{3}_{2,1}}^h
				&\lesssim \|a\|_{\dot{B}^{1}_{2,1}}\|\theta\|_{\dot B^{5}_{2,1}}+\|a\|_{\dot{B}^{3}_{2,1}}\|\theta\|_{\dot B^{3}_{2,1}}\lesssim \mathcal{E}(\tau)\mathcal{D}(\tau),\\
				\|(I(a)\nabla\varphi,I(a)\nabla\theta)\|_{\dot{B}^{2}_{2,1}}^h&\lesssim  \|a\|_{\dot{B}^{1}_{2,1}}\|(\varphi,\theta)\|_{\dot B^{3}_{2,1}}+\|a\|_{\dot{B}^{2}_{2,1}}\|(\varphi,\theta)\|_{\dot B^{2}_{2,1}}\lesssim \mathcal{E}(\tau)\mathcal{D}(\tau),\\
				\|I(a)\Delta\u\|_{\dot{B}^{2}_{2,1}}^h&\lesssim  \|a\|_{\dot{B}^{1}_{2,1}}\|\u\|_{\dot B^{4}_{2,1}}+\|a\|_{\dot{B}^{2}_{2,1}}\|\u\|_{\dot B^{3}_{2,1}}\lesssim \mathcal{E}(\tau)\mathcal{D}(\tau),\\
				\|I(a)(2|D(\u)|^2&-(\div \u)^2)\|_{\dot{B}^{3}_{2,1}}^{h}+\Big\|\frac{1}{1+a}(2|D(\u)|^2-(\div \u)^2)\Big\|_{\dot{B}^{3}_{2,1}}^{h}\\
				&\lesssim  \|I(a)\|_{\dot{B}^{3}_{2,1}}\|D\u\otimes D\u\|_{\dot{B}^{1}_{2,1}}+(1+\|I(a)\|_{\dot{B}^{1}_{2,1}})\|D\u\otimes D\u\|_{\dot{B}^{3}_{2,1}}\\
				&\lesssim \|a\|_{\dot{B}^{3}_{2,1}}\|\u\|_{\dot{B}^{2}_{2,1}}\|\u\|_{\dot{B}^{2}_{2,1}}+(1+\|a\|_{\dot{B}^{1}_{2,1}})\|\u\|_{\dot{B}^{2}_{2,1}}\|\u\|_{\dot{B}^{4}_{2,1}}\\
				&\lesssim (1+\mathcal{E}(\tau))\mathcal{E}(\tau)\mathcal{D}(\tau).
			\end{split}
		\end{equation}
		Moreover, making use of \eqref{jiaohuanzi1}, the commutator terms are bounded as\begin{equation}\label{nestimates3}
			\begin{split}
				\sum_{j\geq -1}2^{2j}\|[\dot{\Delta}_j,\u\cdot\nabla]\nabla\varphi\|_{L^2}+\sum_{j\geq -1}2^{2j}\|[\dot{\Delta}_j,\u\cdot\nabla]\u\|_{L^2}&\lesssim \|\u\|_{\dot{B}^{2}_{2,1}}(\|\varphi\|_{\dot{B}^{3}_{2,1}}+\|\u\|_{\dot{B}^{2}_{2,1}})\\
				&\lesssim \mathcal{E}(\tau)\mathcal{D}(\tau).
			\end{split}
		\end{equation}
		Inserting \eqref{nestimates}, \eqref{nestimates2} and \eqref{nestimates3} into \eqref{han20}, we end up with the desired estimate \eqref{han201}.
	\end{proof}
	
	\subsection{Estimates of $(a,b)$}To enclose the a priori estimates \eqref{han200} and \eqref{han201}, we need to derive the following $L^\infty$ estimates of the non-dissipative variables $(a,b)$ in time.
	\begin{lemma}\label{ablemma}
		For any $t>0$, it holds that \begin{equation}\label{han202}
			\begin{split}
				\|(a,b)\|_{\widetilde{L}^\infty_t(\dot B^{0}_{2,1})}^\ell+\|(a,b)\|_{\widetilde{L}^\infty_t(\dot B^{3}_{2,1})}^h
				&\lesssim \|(a_0,b_0,\aa_0,{\u}_0,{\ta}_0)\|_{\dot{B}^{0}_{2,1}}^{\ell} +\|(a_0,b_0,\varphi_0,\mathbf{\ta}_0)\|_{\dot{B}^{3}_{2,1}}^h\\
				&\quad+\|{\u}_{0}\|_{\dot{B}^{2}_{2,1}}^h+\int_{0}^{t}(1+\mathcal{E}(\tau))\mathcal{E}(\tau)\mathcal{D}(\tau)\,d\tau.
			\end{split}
		\end{equation}
	\end{lemma}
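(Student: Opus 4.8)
The plan rests on the observation that in \eqref{mm3} the fluctuations $a$ and $b$ satisfy the \emph{same} transport equation, $\partial_t f+\mathbf{u}\cdot\nabla f+\operatorname{div}\mathbf{u}=-f\operatorname{div}\mathbf{u}$, and that neither carries its own dissipation. In particular, at low frequencies the linear source $\operatorname{div}\mathbf{u}$ cannot be absorbed into $\int_0^t(1+\mathcal{E})\mathcal{E}\mathcal{D}\,d\tau$, since $\|\operatorname{div}\mathbf{u}\|^\ell_{\dot B^0_{2,1}}$ is only controlled by $\|\mathbf{u}\|^\ell_{\dot B^1_{2,1}}$, one derivative short of the low-frequency norms carried by $\mathcal{D}$. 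I would remove this obstruction by the cancellation $d:=a-b$, which solves the source-free equation $\partial_t d+\mathbf{u}\cdot\nabla d=-d\operatorname{div}\mathbf{u}$, i.e. $\partial_t d+\operatorname{div}(d\mathbf{u})=0$, and I would recover $a,b$ themselves from the algebraic identity that follows from \eqref{defphi} and $b=a-d$, namely $\varphi+d=a\theta+2a+\tfrac12 b^2$, so that
\[
a=\tfrac12(\varphi+d)+R,\qquad b=\tfrac12(\varphi-d)+R,\qquad R:=-\tfrac12 a\theta-\tfrac14 b^2,
\]
with $R$ quadratic. Thus the low-frequency estimate of $(a,b)$ reduces to estimating $d$ and invoking the low-frequency control of $\varphi$ already obtained in Lemma \ref{dipinlemma}.

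For the low-frequency part I would apply the standard transport (continuity) estimate in $\dot B^0_{2,1}(\R^2)$ to $d$, which is admissible since $s=0$ lies in the range $(-\tfrac dp,1+\tfrac dp]=(-1,2]$ when $d=2,\,p=2$; it yields
\[
\|d\|_{\widetilde L^\infty_t(\dot B^0_{2,1})}\lesssim\Big(\|d_0\|_{\dot B^0_{2,1}}+\int_0^t\|d\operatorname{div}\mathbf{u}\|_{\dot B^0_{2,1}}\,d\tau\Big)\exp\Big(C\int_0^t\|\nabla\mathbf{u}\|_{\dot B^1_{2,1}}\,d\tau\Big).
\]
The amplification factor is harmless because $\int_0^t\|\nabla\mathbf{u}\|_{\dot B^1_{2,1}}\,d\tau\lesssim\int_0^t\|\mathbf{u}\|_{\dot B^2_{2,1}}\,d\tau\lesssim\int_0^t\mathcal{D}(\tau)\,d\tau$ is small along the bootstrap, so the exponential is $\lesssim 1$. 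Placing the derivative on $\operatorname{div}\mathbf{u}$ and using the product law \eqref{daishu} together with \eqref{jynl} gives $\|d\operatorname{div}\mathbf{u}\|_{\dot B^0_{2,1}}\lesssim\|\operatorname{div}\mathbf{u}\|_{\dot B^1_{2,1}}\|d\|_{\dot B^0_{2,1}}\lesssim\|\mathbf{u}\|_{\dot B^2_{2,1}}\|d\|_{\dot B^0_{2,1}}\lesssim\mathcal{E}(\tau)\mathcal{D}(\tau)$. Since $\|d_0\|_{\dot B^0_{2,1}}\lesssim\|(a_0,b_0)\|^\ell_{\dot B^0_{2,1}}+\|(a_0,b_0)\|^h_{\dot B^3_{2,1}}$, combining the above with Lemma \ref{dipinlemma} and with the decomposition of $a,b$ displayed above --- the quadratic remainder $R$ being treated by the product laws of Section \ref{sec2} and absorbed using the smallness \eqref{eq:smalla} --- gives the low-frequency half of \eqref{han202}.

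For the high-frequency part I would work directly on the equations for $a$ and $b$ (treating $b$ exactly as $a$). Applying $\dot\Delta_j$ after one spatial derivative --- so as to match the commutator estimate \eqref{jiaohuanzi1} --- and performing an $L^2$ energy estimate, the convective term integrates by parts to produce $\|\operatorname{div}\mathbf{u}\|_{L^\infty}\|\dot\Delta_j\nabla a\|_{L^2}$, while the differentiated right-hand side consists of the product terms $\nabla\mathbf{u}\cdot\nabla a$ and $\nabla\big((1+a)\operatorname{div}\mathbf{u}\big)$; multiplying by $2^{2j}$, summing over $j\geq-1$ and integrating in time,
\[
\|a\|^h_{\widetilde L^\infty_t(\dot B^3_{2,1})}\lesssim\|a_0\|^h_{\dot B^3_{2,1}}+\int_0^t\!\!\Big(\|\operatorname{div}\mathbf{u}\|_{L^\infty}\|a\|^h_{\dot B^3_{2,1}}+\!\!\sum_{j\geq-1}\!2^{2j}\|[\dot\Delta_j,\mathbf{u}\cdot\nabla]\nabla a\|_{L^2}+\|\nabla\mathbf{u}\cdot\nabla a\|^h_{\dot B^2_{2,1}}+\|(1+a)\operatorname{div}\mathbf{u}\|^h_{\dot B^3_{2,1}}\Big)d\tau.
\]
Here $\|\operatorname{div}\mathbf{u}\|_{L^\infty}\lesssim\|\mathbf{u}\|_{\dot B^2_{2,1}}$ by $\dot B^1_{2,1}(\R^2)\hookrightarrow L^\infty(\R^2)$, so the first term is $\lesssim\int_0^t\mathcal{E}\mathcal{D}\,d\tau$ via \eqref{jynl}; the commutator is bounded through \eqref{jiaohuanzi1} by $\|\mathbf{u}\|_{\dot B^2_{2,1}}\|\nabla a\|_{\dot B^2_{2,1}}\lesssim\mathcal{E}\mathcal{D}$; the products $\|\nabla\mathbf{u}\cdot\nabla a\|^h_{\dot B^2_{2,1}}$ and $\|a\operatorname{div}\mathbf{u}\|^h_{\dot B^3_{2,1}}$ are $\lesssim\|a\|_{\dot B^1_{2,1}}\|\mathbf{u}\|_{\dot B^4_{2,1}}+\|a\|_{\dot B^3_{2,1}}\|\mathbf{u}\|_{\dot B^2_{2,1}}\lesssim\mathcal{E}\mathcal{D}$ by \eqref{daishu} and \eqref{jynl}; and the remaining linear source gives $\int_0^t\|\operatorname{div}\mathbf{u}\|^h_{\dot B^3_{2,1}}\,d\tau\lesssim\|\mathbf{u}\|^h_{L^1_t(\dot B^4_{2,1})}$, which --- crucially --- has already been controlled in Lemma \ref{gaopinlemma} by $\|(\varphi_0,\theta_0)\|^h_{\dot B^3_{2,1}}+\|\mathbf{u}_0\|^h_{\dot B^2_{2,1}}+\int_0^t(1+\mathcal{E})\mathcal{E}\mathcal{D}\,d\tau$. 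This is precisely how the initial data $\varphi_0,\theta_0,\mathbf{u}_0$ enter the right-hand side of \eqref{han202}.

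The main obstacle is structural rather than computational: because $(a,b)$ enjoy no smoothing, one cannot close their bounds by a plain transport argument. At low frequencies this is circumvented by the $d=a-b$ cancellation together with the reconstruction via $\varphi$, which pushes all the analytic work onto Lemma \ref{dipinlemma}; at high frequencies the target regularity $\dot B^3_{2,1}$ sits strictly above the critical index $1+\tfrac dp=2$, so the convective term genuinely feeds back a $\|\operatorname{div}\mathbf{u}\|_{L^\infty}\|a\|^h_{\dot B^3_{2,1}}$ term (closed via Grönwall once $\int_0^t\|\operatorname{div}\mathbf{u}\|_{L^\infty}\,d\tau$ is known small) and the commutator must be handled in the exact form \eqref{jiaohuanzi1}, while the unavoidable $\operatorname{div}\mathbf{u}$ source is controlled not in isolation but by borrowing the already-established high-frequency parabolic bound on $\mathbf{u}$ from Lemma \ref{gaopinlemma}. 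Everything else is routine product/commutator bookkeeping via \eqref{daishu}, \eqref{jiaohuanzi1} and \eqref{jynl}, the factor $1+\mathcal{E}$ being inherited from Lemmas \ref{dipinlemma}--\ref{gaopinlemma}.
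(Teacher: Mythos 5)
Your proposal is essentially correct, but it follows a genuinely different route from the paper. The paper introduces the single auxiliary unknown $\delta:=\varphi-2a$ (and its analogue for $b$), for which the linear source $\operatorname{div}\u$ cancels between $\eqref{linequ}_1$ and twice $\eqref{mm3}_1$, so that $\delta$ solves the pure transport equation \eqref{deltaeq}; a standard transport estimate is then performed in \emph{both} frequency regimes ($\dot B^0_{2,1}$ at low and $\dot B^3_{2,1}$ at high frequencies), and $a$ is recovered \emph{exactly and linearly} as $a=\tfrac12(\varphi-\delta)$ from the bounds on $\varphi$ in Lemmas \ref{dipinlemma}--\ref{gaopinlemma}. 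You instead exploit the cancellation between the two transported quantities via $d:=a-b$ at low frequencies and reconstruct $a,b$ through the \emph{nonlinear, implicit} identity $a=\tfrac12(\varphi+d)-\tfrac12 a\theta-\tfrac14 b^2$; at high frequencies you run a direct energy estimate on $a$ and $b$ themselves, paying for the linear source through $\|\u\|^h_{L^1_t(\dot B^4_{2,1})}$ from Lemma \ref{gaopinlemma}. Both strategies work and correctly identify the structural obstruction (no low-frequency control of $\int_0^t\|\u\|^\ell_{\dot B^1_{2,1}}\,d\tau$), but note two caveats in yours. First, the remainder $R=-\tfrac12a\theta-\tfrac14 b^2$ produces terms such as $\sup_\tau\|\theta\|_{\dot B^1_{2,1}}\|a\|_{\dot B^0_{2,1}}\lesssim\sup_\tau\mathcal E(\tau)\cdot\|(a,b)\|_{\widetilde L^\infty_t(\dot B^0_{2,1})}$, which are \emph{not} of the form $\int(1+\mathcal E)\mathcal E\mathcal D\,d\tau$ and must be absorbed into the left-hand side; this requires smallness of $\sup_\tau\mathcal E(\tau)$ (available only from the bootstrap hypothesis $\mathcal X(t)\le 4C_1\mathcal X_0$), not merely \eqref{eq:smalla} as you claim, and the same remark applies to discarding the exponential factor $\exp\bigl(C\int_0^t\|\nabla\u\|_{\dot B^1_{2,1}}\,d\tau\bigr)$. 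Hence your argument proves a conditional variant of \eqref{han202}, sufficient for Theorem \ref{dingli2} but strictly weaker than the paper's unconditional lemma, whose linear reconstruction and direct bound $\|\operatorname{div}\u\|_{L^\infty}\|\delta\|\lesssim\mathcal E\mathcal D$ avoid any absorption. Second, the gain of your high-frequency argument (no auxiliary variable) is offset by its reliance on the already-established $L^1_t(\dot B^4_{2,1})$ bound for $\u^h$, which is exactly how $\varphi_0,\theta_0,\u_0$ enter the right-hand side in the paper as well, so the two proofs are of comparable length there.
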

	\begin{proof}
		Set\begin{equation}
			\begin{split}
				\delta:=\varphi-2a,
			\end{split}
		\end{equation}then it follows from $\eqref{mm3}_1$ and $\eqref{linequ}_1$ that\begin{equation}\label{deltaeq}
			\begin{split}
				\partial_t\delta+\u\cdot\nabla\delta=2a\div\u+{F}_4,
			\end{split}
		\end{equation}where ${F}_4$ was defined in \eqref{jihao}. Applying $\dot{\Delta}_j$ to \eqref{deltaeq} and using a commutator argument, we have\begin{equation}
			\begin{split}
				\partial_t\dot{\Delta}_j\delta+\u\cdot\nabla\dot{\Delta}_j\delta=-[\dot{\Delta}_j,\u\cdot\nabla]\delta+2\dot{\Delta}_j(a\div\u)+\dot{\Delta}_j{F}_4.
			\end{split}
		\end{equation}Standard $L^2$ estimates for the transport equation gives
		\begin{equation}
			\begin{split}
				\|\dot{\Delta}_j\delta(t)\|_{L^2}\lesssim& \|\dot{\Delta}_j\delta_0\|_{L^2}+\int_{0}^{t}\|\div\u\|_{L^\infty}\|\dot{\Delta}_j\delta\|_{L^2}\,d\tau+\int_{0}^{t}\|[\dot{\Delta}_j,\u\cdot\nabla]\delta\|_{L^2}\,d\tau\\&+\int_{0}^{t}\|\dot{\Delta}_j(a\div\u)\|_{L^2}\,d\tau+\int_{0}^{t}\|\dot{\Delta}_j{F}_4\|_{L^2}\,d\tau,
			\end{split}
		\end{equation}
		which further leads to
		\begin{equation}\label{jjiayou}
			\begin{split}
				\|\delta\|_{\widetilde{L}^\infty_t(\dot B^{0}_{2,1})}^\ell+\|\delta\|_{\widetilde{L}^\infty_t(\dot B^{3}_{2,1})}^h
				\lesssim& \|\delta_0\|_{\dot B^{0}_{2,1}}^\ell+\|\delta_0\|_{\dot B^{3}_{2,1}}^h+\int_{0}^{t}\|{F}_4\|_{\dot B^{0}_{2,1}}^\ell\,d\tau+\int_{0}^{t}\|{F}_4\|_{\dot B^{3}_{2,1}}^h\,d\tau
				\\&+\int_{0}^{t}\|a\div\u\|_{\dot B^{0}_{2,1}}^\ell\,d\tau+\int_{0}^{t}\|a\div\u\|_{\dot B^{3}_{2,1}}^h\,d\tau\\&+\int_0^t\sum_{j\le 0}\|[\ddj,\u\cdot\nabla]{\delta}\|_{L^2}\,d\tau+\int_0^t\sum_{j\ge -1}2^{3j}\|[\ddj,\u\cdot\nabla]{\delta}\|_{L^2}\,d\tau\\&+\int_0^t\|\div \u\|_{L^\infty}\|\delta\|_{\dot B^{0}_{2,1}}^\ell\,d\tau+\int_0^t\|\div \u\|_{L^\infty}\|\delta\|_{\dot B^{3}_{2,1}}^h\,d\tau.
			\end{split}
		\end{equation}
		It was shown in the previous two subsections that\begin{equation}
			\begin{split}
				\|{F}_4\|_{\dot B^{0}_{2,1}}^\ell+\|{F}_4\|_{\dot B^{3}_{2,1}}^h\lesssim (1+\mathcal{E}(\tau))\mathcal{E}(\tau)\mathcal{D}(\tau),
			\end{split}
		\end{equation}so we only need to deal with the last three lines in \eqref{jjiayou}. By means of \eqref{law}, \eqref{daishu} and \eqref{jynl}, we can tackle the second line as\begin{equation}
			\begin{split}
				\|a\div\u\|_{\dot B^{0}_{2,1}}^\ell&\lesssim \|a\|_{\dot{B}^{0}_{2,1}}\|\u\|_{\dot{B}^{2}_{2,1}}\lesssim \mathcal{E}(\tau)\mathcal{D}(\tau),\\
				\|a\div\u\|_{\dot B^{3}_{2,1}}^h&\lesssim \|a\|_{\dot{B}^{1}_{2,1}}\|\u\|_{\dot{B}^{4}_{2,1}}+\|a\|_{\dot{B}^{3}_{2,1}}\|\u\|_{\dot{B}^{2}_{2,1}}\lesssim \mathcal{E}(\tau)\mathcal{D}(\tau).
			\end{split}
		\end{equation}Taking advantage of \eqref{jiaohuanzi1} and \eqref{jiaohuanzi2}, the third line can be bounded as\begin{equation}
			\begin{split}
				\sum_{j\le 0}\|[\ddj,\u\cdot\nabla]{\delta}\|_{L^2}
				&\lesssim \|\u\|_{\dot B^{2}_{2,1}}\|\delta\|_{\dot B^{0}_{2,1}}\lesssim \|\u\|_{\dot{B}^{2}_{2,1}}\|(a,\varphi)\|_{\dot{B}^{0}_{2,1}}\\
				&\lesssim \mathcal{D}(\tau)\mathcal{E}(\tau),\\
				\sum_{j\ge -1}2^{3j}\|[\ddj,\u\cdot\nabla]{\delta}\|_{L^2}
				&\lesssim \|\nabla\u\|_{L^\infty}\|\delta\|_{\dot B^{3}_{2,1}}+\|\u\|_{\dot B^{3}_{2,1}}\|\nabla\delta\|_{L^\infty}
				\\&\lesssim \|\u\|_{\dot{B}^{2}_{2,1}}\|(a,\varphi)\|_{\dot{B}^{3}_{2,1}}+\|\u\|_{\dot{B}^{3}_{2,1}}\|(a,\varphi)\|_{\dot{B}^{2}_{2,1}}\\
				&\lesssim \mathcal{D}(\tau)\mathcal{E}(\tau).
			\end{split}
		\end{equation}Besides, it is clear that the last line is bounded by $\mathcal{D}(\tau)\mathcal{E}(\tau)$. Thus, substituting all nonlinear estimates above into \eqref{jjiayou}, we arrive at\begin{equation}\label{deltaes}
			\begin{split}
				&\|\delta\|_{\widetilde{L}^\infty_t(\dot B^{0}_{2,1})}^\ell+\|\delta\|_{\widetilde{L}^\infty_t(\dot B^{3}_{2,1})}^h
				\lesssim \|(a_0,\varphi_0)\|_{\dot B^{0}_{2,1}}^\ell+\|(a_0,\varphi_0)\|_{\dot B^{3}_{2,1}}^h+\int_{0}^{t}(1+\mathcal{E}(\tau))\mathcal{E}(\tau)\mathcal{D}(\tau)\,d\tau,
			\end{split}
		\end{equation}Owing to the fact that $a=\frac{1}{2}(\varphi-\delta)$, we finally obtain from \eqref{han200}, \eqref{han201} and \eqref{deltaes} that\begin{equation}\label{adgj}
			\begin{split}
				\|a\|_{\widetilde{L}^\infty_t(\dot B^{0}_{2,1})}^\ell+\|a\|_{\widetilde{L}^\infty_t(\dot B^{3}_{2,1})}^h
				&\lesssim \|(\varphi,\delta)\|_{\widetilde{L}^\infty_t(\dot B^{0}_{2,1})}^\ell+\|(\varphi,\delta)\|_{\widetilde{L}^\infty_t(\dot B^{3}_{2,1})}^h
				\\&\lesssim \|(a_0,\aa_0,{\u}_0,{\ta}_0)\|_{\dot{B}^{0}_{2,1}}^{\ell} +\|(a_0,\varphi_0,\mathbf{\ta}_0)\|_{\dot{B}^{3}_{2,1}}^h+\|{\u}_{0}\|_{\dot{B}^{2}_{2,1}}^h\\
				&\quad+\int_{0}^{t}(1+\mathcal{E}(\tau))\mathcal{E}(\tau)\mathcal{D}(\tau)\,d\tau.
			\end{split}
		\end{equation}
		In the same manner, we also can infer from $\eqref{mm3}_4$ that\begin{equation}\label{bdgj}
			\begin{split}
				\|b\|_{\widetilde{L}^\infty_t(\dot B^{0}_{2,1})}^\ell+\|b\|_{\widetilde{L}^\infty_t(\dot B^{3}_{2,1})}^h
				&\lesssim \|(b_0,\aa_0,{\u}_0,{\ta}_0)\|_{\dot{B}^{0}_{2,1}}^{\ell} +\|(b_0,\varphi_0,\mathbf{\ta}_0)\|_{\dot{B}^{3}_{2,1}}^h+\|{\u}_{0}\|_{\dot{B}^{2}_{2,1}}^h\\
				&\quad+\int_{0}^{t}(1+\mathcal{E}(\tau))\mathcal{E}(\tau)\mathcal{D}(\tau)\,d\tau.
			\end{split}
		\end{equation}
		The proof of Lemma \ref{ablemma} is completed.
	\end{proof}

	\subsection{Completing the proof of Theorem \ref{dingli2}}
	Denote $T^*$ by the life-span of the local solution $(a,\u,\theta,b)$ constructed in Proposition \ref{dingli1}. Hence, to prove Theorem \ref{dingli2}, it suffices to prove that $T^*=\infty$ and there holds \eqref{xiaonorm}.\par  Denote \begin{equation}\label{normdef2}
		\begin{split}
			\mathcal{X}(t):=&		\norm{(\varphi,\u,{\ta},a,{b})}_{\widetilde{L}^{\infty}_t(\dot{B}^{0}_{2,1})}^{\ell}+\norm{(\varphi,{\ta},a,{b})}_{\widetilde{L}^{\infty}_t(\dot{B}^{3}_{2,1})}^{h}
			+\norm{\u}_{\widetilde{L}^{\infty}_t(\dot{B}^{2}_{2,1})}^h\\
			&+\norm{(\aa,\u,{\ta})}_{L^1_t(\dot{B}^{2}_{2,1})}^{\ell}
			+\norm{\aa}_{L^{1}_t(\dot{B}^{2}_{2,1})}^h
			+\norm{\u}_{L^1_t(\dot{B}^{4}_{2,1})}^h+\norm{\ta}_{L^1_t(\dot{B}^{5}_{2,1})}^h.
		\end{split}
	\end{equation}
	Recall
	$\varphi_0=a_0+a_0\theta_0+\frac{1}{2}b_0^2+b_0$, it follows from \eqref{law} and \eqref{daishu} that
	\begin{equation}\label{tadi}
		\begin{split}
			\norm{\varphi_0}_{\dot{B}^{0}_{2,1}}^{\ell}+\norm{\varphi_0}_{\dot{B}^{3}_{2,1}}^{h}
			&\lesssim\norm{(a_0,b_0)}_{\dot{B}^{0}_{2,1}}^{\ell}+\norm{(a_0,b_0)}
			_{\dot{B}^{3}_{2,1}}^{h}+\norm{a_0}_{\dot{B}^{0}_{2,1}}\norm{\theta_0}_{\dot{B}^{1}_{2,1}}+\norm{a_0}_{\dot{B}^{1}_{2,1}}\norm{\theta_0}_{\dot{B}^{3}_{2,1}}\\
			&\qquad+\norm{a_0}_{\dot{B}^{3}_{2,1}}\norm{\theta_0}_{\dot{B}^{1}_{2,1}}
			+\norm{b_0}_{\dot{B}^{0}_{2,1}}\norm{b_0}_{\dot{B}^{1}_{2,1}}+\norm{b_0}_{\dot{B}^{1}_{2,1}}\norm{b_0}_{\dot{B}^{3}_{2,1}}\\
			&\leq   C_1(1+\mathcal{X}_0)\mathcal{X}_0
		\end{split}
	\end{equation}for some constant $C_1>1$, where $\mathcal{X}_0$ is defined in \eqref{smallness}.
	Therefore, taking $\varepsilon\in(0,1)$ in \eqref{smallness} sufficiently small enables us to define\begin{equation}
		\begin{split}
			T^{**}:=\sup\Big\{t\in [0,T^*): \mathcal{X}(t)\leq 4C_1\mathcal{X}_0\quad\text{and}\quad\text{\eqref{eq:smalla} holds }\Big\}.
		\end{split}
	\end{equation}
	Then, adding up \eqref{han200}, \eqref{han201} and \eqref{han202} yields that for any $t\in [0,T^{**})$,\begin{equation}\label{close}
		\begin{split}
			\mathcal{X}(t)&\lesssim (1+\mathcal{X}_0)\mathcal{X}_0+\int_{0}^{t}(1+\mathcal{E}(\tau))\mathcal{E}(\tau)\mathcal{D}(\tau)\,d\tau\\&\lesssim (1+\mathcal{X}_0)\mathcal{X}_0+\Big(1+\sup_{\tau\in [0,t]}\mathcal{E}(\tau)\Big)\cdot\sup_{\tau\in [0,t]}\mathcal{E}(\tau)\cdot\int_{0}^{t}\mathcal{D}(\tau)\,d\tau\\
			&\lesssim (1+\mathcal{X}_0)\mathcal{X}_0+(1+\mathcal{X}(t))\mathcal{X}^2(t).
		\end{split}
	\end{equation}
	Based on the estimate above, it is easy to use a standard continuity argument to prove that $T^{**} =\infty$ provided that $\varepsilon\ll 1$. The details are omitted here. This completes the proof of Theorem \ref{dingli2}.
	
	\section{The proof of Theorem \ref{dingli3}}\label{sec3}
	This section is devoted to deriving the optimal time-decay rates for the dissipative variables $(\varphi,\u,\theta)$. Inspired by \cite{MR3005540,MR4188989}, we first show that the negative Besov norms of $(a,\varphi,\u,\theta)$ at low frequencies are bounded along time evolution. Then, by virtue of the interpolation inequality, we establish a Lyapunov-type differential inequality for the energy norm of $(\varphi,\u,\theta)$, which leads to the desired time-decay estimates. We remark that the notations in \eqref{normdef} continue to be used in this section.
	
	\subsection{The boundness of negative Besov norm}
	\begin{proposition}\label{propagate}
		Let $0<\sigma\le1$ and $\|(a_0,\u_0,\ta_0,b_0)\|^{\ell}_{\dot B^{{-\sigma}}_{2,\infty}}<\infty$, then it holds that for all $t\geq 0$,\begin{equation}\label{sa39-1}
			\begin{split}
				\mathcal{Y}(t):=\|(a,\aa,\u,\ta)(t)\|^{\ell}_{\dot B^{{-\sigma}}_{2,\infty}}\leq C_0,
			\end{split}
		\end{equation}
		where the constant $C_0>0$ depends on the norms of the initial data.
	\end{proposition}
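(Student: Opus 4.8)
The plan is to propagate the negative-index norm by a Gronwall/bootstrap argument in which the coupling between the transported pair $(a,b)$ and the dissipative triple $(\varphi,\u,\ta)$ is absorbed thanks to the smallness of $\int_0^{\infty}\mathcal{D}(\tau)\,d\tau$ (which is $\lesssim\mathcal{X}_0$ by \eqref{xiaonorm}). First, running the energy scheme of Lemma \ref{dipinlemma} on \eqref{linequ}, but weighting the $j$-th dyadic block by $2^{-j\sigma}$ and taking a supremum over $j\le 0$ in place of a sum, one gets the parabolic bound
\begin{equation*}
\|(\varphi,\u,\ta)\|^{\ell}_{\widetilde{L}^{\infty}_t(\dot B^{-\sigma}_{2,\infty})}+\|(\varphi,\u,\ta)\|^{\ell}_{L^1_t(\dot B^{2-\sigma}_{2,\infty})}\lesssim\|(\varphi_0,\u_0,\ta_0)\|^{\ell}_{\dot B^{-\sigma}_{2,\infty}}+\int_0^t\|(F_1,F_2,F_3)\|^{\ell}_{\dot B^{-\sigma}_{2,\infty}}\,d\tau .
\end{equation*}
For $(a,b)$, which carry no smoothing, I would use the conservative forms $\partial_t a+\div(a\u)=-\div\u$ and $\partial_t b+\div(b\u)=-\div\u$ equivalent to $\eqref{mm3}_1$, $\eqref{mm3}_4$; applying $\ddj$, testing with $\ddj a$ (resp. $\ddj b$), using $\div\u\in L^1_t(L^\infty)$ (which follows from $\u\in L^1_t(\dot B^2_{2,1})$, itself part of \eqref{xiaonorm}), then weighting by $2^{-j\sigma}$ and taking the supremum over $j\le 0$, yields
\begin{equation*}
\|(a,b)\|^{\ell}_{\widetilde{L}^{\infty}_t(\dot B^{-\sigma}_{2,\infty})}\lesssim\|(a_0,b_0)\|^{\ell}_{\dot B^{-\sigma}_{2,\infty}}+\int_0^t\big(\|\u\|^{\ell}_{\dot B^{1-\sigma}_{2,\infty}}+\|(a\u,b\u)\|^{\ell}_{\dot B^{1-\sigma}_{2,\infty}}\big)\,d\tau ,
\end{equation*}
and $\|\u\|^{\ell}_{\dot B^{1-\sigma}_{2,\infty}}\lesssim\|\u\|_{\dot B^2_{2,1}}\in L^1_t$ by \eqref{xiaonorm}.

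The core of the argument is the nonlinear bookkeeping for $F_1,F_2,F_3$ and for $a\u,b\u$. Each product is split by a paraproduct and estimated by a law of the schematic type $\|fg\|_{\dot B^{-\sigma}_{2,\infty}}\lesssim\|f\|_{\dot B^{-\sigma}_{2,\infty}}\|g\|_{\dot B^{2}_{2,1}}$ in $\R^2$, where I always place the $\dot B^{-\sigma}_{2,\infty}$-norm on whichever factor appears in $\mathcal{Y}$ (one of $a,b,\varphi,\u,\ta$ at low frequency, or — extended by their high-frequency regularity from \eqref{xiaonorm} — globally in frequency) and the $\dot B^{2}_{2,1}$-type norm on the other; the composition function $I(a)$ is treated as in \eqref{ag1}, the quadratic terms $2|D(\u)|^2-(\div\u)^2$ as in the proofs of Lemmas \ref{dipinlemma}--\ref{ablemma}. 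The decisive structural facts are that $\varphi,\u,\ta$ all lie in $L^1_t(\dot B^2_{2,1})$ \emph{globally} in frequency — their low-frequency parts by the $\mathcal{D}$-norms in \eqref{xiaonorm}, their high-frequency parts because at high frequency $\dot B^3_{2,1},\dot B^4_{2,1},\dot B^5_{2,1}\hookrightarrow\dot B^2_{2,1}$ — while $a,b$ remain uniformly bounded in $\dot B^1_{2,1}$ (indeed in $\dot B^s_{2,1}$ for $0\le s\le 3$, using also the low-frequency inclusion $\dot B^0_{2,1}\hookrightarrow\dot B^s_{2,1}$ for $s\ge0$ restricted to $j\le0$). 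Consequently every nonlinear term is bounded either by $C\mathcal{X}_0^2$ (when no factor sits in $\mathcal{Y}$) or by $C\big(\sup_{\tau\in[0,t]}\mathcal{Y}(\tau)\big)\int_0^t\mathcal{D}(\tau)\,d\tau\le C\mathcal{X}_0\sup_{\tau\in[0,t]}\mathcal{Y}(\tau)$.

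Adding the two displayed estimates and inserting these bounds gives $\mathcal{Y}(t)\le C\big(\mathcal{Y}(0)+\mathcal{X}_0^2\big)+C\mathcal{X}_0\sup_{\tau\in[0,t]}\mathcal{Y}(\tau)$, where $\mathcal{Y}(0)<\infty$ because the only non-obvious constituent, $\varphi_0=a_0+a_0\ta_0+\frac12 b_0^2+b_0$, inherits the $\dot B^{-\sigma}_{2,\infty}$ bound from $a_0,b_0$ by a product-law argument analogous to that used for \eqref{tadi}. Since $\mathcal{X}_0\le\varepsilon\ll 1$, the last term is absorbed into the left-hand side and one concludes $\mathcal{Y}(t)\le C_0$ for all $t\ge 0$. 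The main obstacle is the borderline case $\sigma=1$: in two dimensions the multiplication $\dot B^{-1}_{2,\infty}\cdot\dot B^{1}_{2,1}$ fails (its remainder term only lands at zero regularity), so one cannot afford to keep the smooth factor merely at the critical level $\dot B^1_{2,1}$; the remedy is precisely the observation above that at low frequency $\dot B^{2}_{2,1}\hookrightarrow\dot B^{s}_{2,1}$ for $s\ge2$ together with $\u,\varphi,\ta\in L^1_t(\dot B^2_{2,1})$ globally and $a,b\in L^\infty_t(\dot B^2_{2,1})$ globally, which lets one always run the product law with the smooth factor in $\dot B^2_{2,1}$ (or higher) rather than $\dot B^1_{2,1}$. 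Keeping this frequency split consistent across all nonlinear terms — particularly for products such as $a\,\div\u$, where $a$ must be taken as the rough factor — is the one genuinely delicate point.
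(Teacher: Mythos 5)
Your treatment of the dissipative triple $(\varphi,\u,\ta)$, of the source terms $F_1,F_2,F_3$, of $\mathcal{Y}(0)$ (via product laws for $\varphi_0$), and the final Gronwall/absorption closure all match the paper's argument. The genuine gap is in your estimate for $a$ (and $b$, which incidentally is not needed, since $\mathcal{Y}$ does not contain $b$). You propagate $\|a\|^{\ell}_{\dot B^{-\sigma}_{2,\infty}}$ directly from $\partial_t a+\div(a\u)=-\div\u$ and dispose of the \emph{linear} source by claiming $\|\u\|^{\ell}_{\dot B^{1-\sigma}_{2,\infty}}\lesssim\|\u\|_{\dot B^{2}_{2,1}}\in L^1_t$. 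That inequality is false: on low frequencies $j\le 0$ the weight $2^{j(1-\sigma)}$ dominates $2^{2j}$ (testing on a single block at $j=-N$ gives a ratio $2^{N(1+\sigma)}\to\infty$), so $\|\u\|^{\ell}_{\dot B^{1-\sigma}_{2,\infty}}$ is controlled only by $\|\u\|^{\ell}_{\dot B^{-\sigma}_{2,\infty}}$, i.e.\ by $\mathcal{Y}$ itself, which is bounded but not time-integrable; interpolating between $L^\infty_t(\dot B^{-\sigma}_{2,\infty})$ and $L^1_t(\dot B^{2}_{2,1})$ does not rescue this, since the resulting power of the $L^1_t$ quantity is strictly less than one. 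Hence the contribution $\int_0^t\|\div\u\|^{\ell}_{\dot B^{-\sigma}_{2,\infty}}\,d\tau$ grows with $t$ and your estimate for $a$ does not close.

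This is precisely why the paper does not estimate $a$ from its own equation: it introduces $\delta:=\varphi-2a$, for which the linear $\div\u$ terms of $\eqref{linequ}_1$ and $\eqref{mm3}_1$ cancel, leaving the pure transport equation \eqref{deltaeq} with only quadratic sources $2a\div\u+F_4$; these are then handled by the commutator estimate \eqref{jiaohuanzi3} and the product law \eqref{key0}, both of which place the full $\dot B^{2}_{2,1}$-norm of $\u$ (hence $\mathcal{D}\in L^1_t$) on the velocity, and $a=\frac12(\varphi-\delta)$ is recovered at the end. Some such cancellation of the linear coupling is indispensable, and it is the one idea missing from your proposal. Two smaller remarks: first, your concern about $\sigma=1$ is unfounded — the paper's law \eqref{guangyidaishu} holds at the endpoint $s_1+s_2=0$ exactly because the output has third index $\infty$; second, your substitute law $\|fg\|_{\dot B^{-\sigma}_{2,\infty}}\lesssim\|f\|_{\dot B^{-\sigma}_{2,\infty}}\|g\|_{\dot B^{2}_{2,1}}$ is not a valid homogeneous product law as stated (the index $2$ in the paper always arises as $\|\nabla g\|_{\dot B^{1}_{2,1}}=\|g\|_{\dot B^{2}_{2,1}}$ for factors genuinely carrying a derivative), so the bookkeeping should be run with \eqref{key0} as in \eqref{sa33}--\eqref{zhjy1}.
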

	\begin{proof}
		It follows from \eqref{qi4} that\begin{equation}
			\begin{split}
				\frac{d}{dt}&\|(\ddj \varphi,\ddj \u,\ddj \theta)\|^2_{L^2} +\|(\nabla\ddj \u,\nabla\ddj \theta)\|^2_{L^2}\\&\lesssim \|(\ddj {F}_1,\ddj {F}_2,\ddj {F}_3)\|_{L^2}\|(\ddj \varphi,\ddj \u,\ddj \theta)\|_{L^2}.
			\end{split}
		\end{equation}
		By performing a routine procedure, we obtain\begin{equation}\label{sa9}
			\begin{split}
				\norm{(\aa,\u,\ta)(t)}^{\ell}_{\dot B^{{-\sigma}}_{2,\infty}}
				\lesssim\norm{(\aa_0,\u_0,\ta_0)}^{\ell}_{\dot{B}^{{-\sigma}}_{2,\infty}}
				+\int_{0}^{t}\norm{({F_1},{F_2},{F_3})}_{\dot{B}^{{-\sigma}}_{2,\infty}}^{\ell}\,d\tau.
			\end{split}
		\end{equation}
		To control $\norm{a(t)}^{\ell}_{\dot{B}^{{-\sigma}}_{2,\infty}}$, we revert to \eqref{deltaeq} and get by a similar derivation of \eqref{jjiayou} that
		\begin{equation}\label{newdelta}
			\begin{split}
				\|\delta(t)\|_{\dot B^{{-\sigma}}_{2,\infty}}^\ell
				\lesssim &\|\delta_0\|_{\dot B^{-\sigma}_{2,\infty}}^\ell+\int_{0}^{t}\|{F}_4\|_{\dot{B}^{{-\sigma}}_{2,\infty}}^{\ell}\,d\tau+\int_{0}^{t}\|a\div\u\|_{\dot{B}^{{-\sigma}}_{2,\infty}}^{\ell}\,d\tau\\&+\int_0^t\sup_{j\leq 0}2^{-j\sigma}\|[\ddj,\u\cdot\nabla]{\delta}\|_{L^2}\,d\tau+\int_0^t\|\div \u\|_{L^\infty}\|\delta\|_{\dot B^{-\sigma}_{2,\infty}}^\ell\,d\tau,
			\end{split}
		\end{equation}which together with \eqref{sa9} yields\begin{equation}\label{sa12}
			\begin{split}
				\mathcal{Y}(t)
				\lesssim&\norm{(a_0,\varphi_0,\u_0,\ta_0)}^{\ell}_{\dot{B}^{{-\sigma}}_{2,\infty}}
				+\int_{0}^{t}\|({F}_1,{F}_2,{F}_3,{F}_4)\|_{\dot{B}^{{-\sigma}}_{2,\infty}}^{\ell}\,d\tau+\int_{0}^{t}\|a\div\u\|_{\dot{B}^{{-\sigma}}_{2,\infty}}^{\ell}\,d\tau\\&+\int_0^t\sup_{j\leq 0}2^{-j\sigma}\|[\ddj,\u\cdot\nabla]{\delta}\|_{L^2}\,d\tau+\int_{0}^{t}\mathcal{D}(\tau)\mathcal{Y}(\tau)\,d\tau.
			\end{split}
		\end{equation}To estimate the nonlinear terms in the right-hand side above, we need the following product estimates stem from \eqref{guangyidaishu} with $(p,d,s_1,s_2)=(2,2,1,-\sigma)$: \begin{equation}\label{key0}
			\begin{split}
				\norm{fg}_{\dot{B}_{2,\infty}^{{-\sigma}}}\lesssim \norm{f}_{\dot{B}_{2,\infty}^{{-\sigma}}}\norm{g}_{\dot{B}_{2,1}^{1}},\quad 0<\sigma\leq 1.
			\end{split}
		\end{equation}
		Besides, we observe that\begin{equation}\label{fenjie123}
			\begin{split}
				\|(a,\aa,\u,\ta)\|_{\dot B^{{-\sigma}}_{2,\infty}}&\lesssim \|(a,\aa,\u,\ta)\|^{\ell}_{\dot B^{{-\sigma}}_{2,\infty}}+\|(a,\aa,\ta)\|^{h}_{\dot B^{{3}}_{2,\infty}}+\|\u\|^{h}_{\dot B^{{2}}_{2,\infty}}\\
				&\lesssim \mathcal{Y}(\tau)+\mathcal{E}(\tau).
			\end{split}
		\end{equation}Hence, resorting to \eqref{key0}, \eqref{fenjie123} and \eqref{jynl}, we get\begin{equation}\label{sa33}
			\begin{split}
				\|(\u\cdot\nabla\varphi,\u\cdot\nabla\u,\u\cdot\nabla\ta)\|_{\dot{B}^{{-\sigma}}_{2,\infty}}^{\ell}&\lesssim \|\u\|_{\dot B^{-\sigma}_{2,\infty}}\|(\varphi,\u,\ta)\|_{\dot B^{2}_{2,1}}\lesssim (\mathcal{Y}(\tau)+\mathcal{E}(\tau))\mathcal{D}(\tau),
				\\
				\|(\varphi\div\u,\ta\div\u,a\div\u)\|_{\dot{B}^{{-\sigma}}_{2,\infty}}^{\ell}&\lesssim \|(\varphi,\ta,a)\|_{\dot{B}^{{-\sigma}}_{2,\infty}}\|\u\|_{\dot{B}^{2}_{2,1}}\lesssim (\mathcal{Y}(\tau)+\mathcal{E}(\tau))\mathcal{D}(\tau).
			\end{split}
		\end{equation}Keeping in mind that $I(a)=a-aI(a)$, then using \eqref{key0}, \eqref{ag1}, \eqref{fenjie123} and \eqref{jynl} gives\begin{equation}\label{du17}
			\begin{split}
				\|I(a)\|_{\dot B^{-\sigma}_{2,\infty}}&\lesssim \|a\|_{\dot B^{-\sigma}_{2,\infty}}+\|a\|_{\dot B^{-\sigma}_{2,\infty}}\|I(a)\|_{\dot B^{1}_{2,1}}\\&\lesssim \|a\|_{\dot B^{-\sigma}_{2,\infty}}(1+\|a\|_{\dot B^{1}_{2,1}})\\
				&\lesssim (\mathcal{Y}(\tau)+\mathcal{E}(\tau))(1+\mathcal{E}(\tau)).
			\end{split}
		\end{equation}As a result, applying \eqref{key0} and \eqref{daishu} to nonlinear terms involving $I(a)$ implies\begin{equation}\label{zhjy1}
			\begin{split}
				\|(I(a)\Delta\theta,I(a)\Delta\u)\|_{\dot{B}^{{-\sigma}}_{2,\infty}}^{\ell}&\lesssim \|I(a)\|_{\dot B^{-\sigma}_{2,\infty}}\|(\theta,\u)\|_{\dot B^{3}_{2,1}}\lesssim (\mathcal{Y}(\tau)+\mathcal{E}(\tau))(1+\mathcal{E}(\tau))\mathcal{D}(\tau),\\
				\|(I(a)\nabla\varphi,I(a)\nabla\theta)\|_{\dot{B}^{{-\sigma}}_{2,\infty}}^{\ell}&\lesssim \|I(a)\|_{\dot B^{-\sigma}_{2,\infty}} \|(\varphi,\theta)\|_{\dot B^{2}_{2,1}}\lesssim  (\mathcal{Y}(\tau)+\mathcal{E}(\tau))(1+\mathcal{E}(\tau))\mathcal{D}(\tau),\\
				\|I(a)(2|D(\u)|^2&-(\div \u)^2)\|_{\dot{B}^{{-\sigma}}_{2,\infty}}^{\ell}+\Big\|\frac{1}{1+a}(2|D(\u)|^2-(\div \u)^2)\Big\|_{\dot{B}^{{-\sigma}}_{2,\infty}}^{\ell}\\&\lesssim \|I(a)\|_{\dot B^{-\sigma}_{2,\infty}}\|D\u\otimes D\u\|_{\dot B^{1}_{2,1}}+\|D\u\otimes D\u\|_{\dot B^{-\sigma}_{2,\infty}}\\
				&\lesssim \|I(a)\|_{\dot B^{-\sigma}_{2,\infty}}\|\u\|_{\dot B^{2}_{2,1}}\|\u\|_{\dot B^{2}_{2,1}}+\|\u\|_{\dot B^{1-\sigma}_{2,1}}\|\u\|_{\dot B^{2}_{2,1}}\\
				&\lesssim (\mathcal{Y}(\tau)+\mathcal{E}(\tau))(1+\mathcal{E}(\tau))\mathcal{E}(\tau)\mathcal{D}(\tau)+\mathcal{E}(\tau)\mathcal{D}(\tau).
			\end{split}
		\end{equation}
		Thanks to \eqref{jiaohuanzi3}, we also have\begin{equation}\label{sa922}
			\begin{split}
				\sup_{j\leq 0}2^{-j\sigma}\|[\ddj,\u\cdot\nabla]{\delta}\|_{L^2}
				&\lesssim \|\u\|_{\dot{B}^{2}_{2,1}}\|\delta\|_{\dot B^{{-\sigma}}_{2,\infty}}\\
				&\lesssim \|\u\|_{\dot{B}^{2}_{2,1}}(\|(a,\varphi)\|_{\dot B^{{-\sigma}}_{2,\infty}}^\ell+\|(a,\varphi)\|_{\dot B^{3}_{2,1}}^h)\\
				&\lesssim \mathcal{D}(\tau)(\mathcal{Y}(\tau)+\mathcal{E}(\tau)).
			\end{split}
		\end{equation}Inserting \eqref{sa33}, \eqref{zhjy1} and \eqref{sa922} into \eqref{sa12}, we finally arrive at\begin{equation}\label{sa36}
			\begin{split}
				\mathcal{Y}(t)
				\lesssim&\norm{(a_0,\varphi_0,\u_0,\ta_0)}^{\ell}_{\dot{B}^{{-\sigma}}_{2,\infty}}
				+\int_{0}^{t}(1+\mathcal{E}(\tau))^2\mathcal{E}(\tau)\mathcal{D}(\tau)\,d\tau\\&+\int_{0}^{t}(1+\mathcal{E}(\tau))^2\mathcal{D}(\tau)\mathcal{Y}(\tau)\,d\tau.
			\end{split}
		\end{equation}
		According to the definition of $\varphi_0$ in \eqref{linequ}, using \eqref{key0} implies\begin{equation}
			\begin{split}
				\norm{\varphi_0}^{\ell}_{\dot{B}^{{-\sigma}}_{2,\infty}}
				\lesssim&\norm{(a_0,b_0)}^{\ell}_{\dot{B}^{{-\sigma}}_{2,\infty}}+\norm{a_0}_{\dot{B}^{{-\sigma}}_{2,\infty}}
				\norm{\ta_0}_{\dot{B}^{{1}}_{2,1}}+\norm{b_0}_{\dot{B}^{{-\sigma}}_{2,\infty}}
				\norm{b_0}_{\dot{B}^{{1}}_{2,1}}\\
				\lesssim&\norm{(a_0,b_0)}^{\ell}_{\dot{B}^{{-\sigma}}_{2,\infty}}+(\norm{(a_0,b_0)}^{\ell}_{\dot{B}^{{-\sigma}}_{2,\infty}}+\norm{(a_0,b_0)}^{h}_{\dot{B}^{3}_{2,1}})(\norm{(\theta_0,b_0)}^{\ell}_{\dot{B}^{0}_{2,1}}\\&+\norm{(\theta_0,b_0)}^{h}_{\dot{B}^{3}_{2,1}}).
			\end{split}
		\end{equation}
		Keep in mind that for all $t\geq 0$,\begin{equation}
			\begin{split}
				\sup_{\tau\in [0,t]}\mathcal{E}(\tau)+\int_{0}^{t}\mathcal{D}(\tau)\,d\tau\leq \mathcal{X}(t)\lesssim \mathcal{X}_0.
			\end{split}
		\end{equation}
		Consequently, employing Gronwall's inequality to \eqref{sa36} gives us that for all $t\geq 0$,\begin{equation}
			\begin{split}
				\mathcal{Y}(t) \leq C_0,
			\end{split}
		\end{equation}where the constant $C_0>0$ depends on the norms of the initial data. This completes the proof of Proposition \ref{propagate}.
	\end{proof}
	
	\subsection{Completing the proof of Theorem \ref{dingli3}}
	From the proofs of Lemma \ref{dipinlemma} and Lemma \ref{gaopinlemma} in the previous section, we can get the following inequality:\begin{equation}\label{sa1}
		\begin{split}
			&\frac{d}{dt}	(\|(\aa,\u,\ta)\|^\ell_{\dot{B}_{2,1}^{0}}+\|(\aa,\ta)\|^h_{\dot{B}_{2,1}^3}+\|\u\|^h_{\dot{B}_{2,1}^2})+\mathcal{D}(t)\lesssim (1+\mathcal{E}(t))\mathcal{E}(t)\mathcal{D}(t).
		\end{split}
	\end{equation}Since $\mathcal{E}(t)\lesssim \varepsilon\ll 1$ for all $t\geq 0$, we conclude that\begin{equation}\label{sa3}
		\begin{split}
			&\frac{d}{dt}(\|(\aa,\u,\ta)\|^\ell_{\dot{B}_{2,1}^{0}}+\|(\aa,\ta)\|^h_{\dot{B}_{2,1}^3}+\|\u\|^h_{\dot{B}_{2,1}^2})+c\mathcal{D}(t)\leq 0
		\end{split}
	\end{equation}for some constant $c>0$. Thanks to $0<{\sigma}\leq 1$,
	we infer from the interpolation inequality \eqref{inter2} that\begin{equation}
		\begin{split}
			\norm{(\aa,\u,\ta)}^\ell_{\dot{B}_{2,1}^{0}}
			\lesssim \Big(\norm{(\aa,\u,\ta)}^\ell_{\dot{B}_{2,\infty}^{{-\sigma}}}\Big)^{\frac{2}{2+\sigma}}\Big(\norm{(\aa,\u,\ta)}^\ell_{\dot{B}_{2,1}^{2}}\Big)^{\frac{\sigma}{2+\sigma}},
		\end{split}
	\end{equation}
	which along with \eqref{sa39-1} implies that\begin{equation}\label{sa51}
		\begin{split}
			\norm{(\aa,\u,\ta)}^\ell_{\dot{B}_{2,1}^{2}}\geq C\big(\norm{(\aa,\u,\ta)}^\ell_{\dot{B}_{2,1}^{0}}\big)^{1+\frac{\sigma}{2}}.
		\end{split}
	\end{equation}Moreover, it follows from the fact  $\|(\aa,\ta)\|^h_{\dot{B}_{2,1}^3}+\|\u\|^h_{\dot{B}_{2,1}^2}\leq \mathcal{E}(t)\ll 1$ for all $t\geq 0$ that\begin{equation}
		\begin{split}
			\|\aa\|^h_{\dot{B}_{2,1}^3}\geq C \Big(\|\aa\|^h_{\dot{B}_{2,1}^3}\Big)^{1+\frac{\sigma}{2}},\quad  \|\u\|^h_{\dot{B}_{2,1}^4}\geq C \Big(\|\u\|^h_{\dot{B}_{2,1}^2}\Big)^{1+\frac{\sigma}{2}}, \quad \|\ta\|^h_{\dot{B}_{2,1}^5}\geq C \Big(\|\ta\|^h_{\dot{B}_{2,1}^3}\Big)^{1+\frac{\sigma}{2}}.
		\end{split}
	\end{equation}
	Therefore, there exists a constant $\tilde{c}>0$ such that the following  Lyapunov-type inequality holds
	\begin{equation}\label{sa58}
		\begin{split}
			\frac{d}{dt}&(\|(\aa,\u,\ta)\|^\ell_{\dot{B}_{2,1}^{0}}+\|(\aa,\ta)\|^h_{\dot{B}_{2,1}^3}+\|\u\|^h_{\dot{B}_{2,1}^2})\\
			&+\tilde{c} (\|(\aa,\u,\ta)\|^\ell_{\dot{B}_{2,1}^{0}}+\|(\aa,\ta)\|^h_{\dot{B}_{2,1}^3}+\|\u\|^h_{\dot{B}_{2,1}^2})^{1+\frac{\sigma}{2}}\le0.
		\end{split}
	\end{equation}
	Solving this differential inequality directly leads to that for all $t\geq 0$,\begin{equation}\label{sa59}
		\begin{split}
			\|(\aa,\u,\ta)\|^\ell_{\dot{B}_{2,1}^{0}}
			+\|(\aa,\ta)\|^h_{\dot{B}_{2,1}^3}+\|\u\|^h_{\dot{B}_{2,1}^2} \lesssim (1+t)^{-\frac{{\sigma}}{2}},
		\end{split}
	\end{equation}
	from which we get\begin{equation}
		\begin{split}
			\|(\aa,\u,\ta)\|_{\dot{B}_{2,1}^{0}}\lesssim \|(\aa,\u,\ta)\|^\ell_{\dot{B}_{2,1}^{0}}
			+\|(\aa,\ta)\|^h_{\dot{B}_{2,1}^3}+\|\u\|^h_{\dot{B}_{2,1}^2}\lesssim (1+t)^{-\frac{{\sigma}}{2}}.
		\end{split}
	\end{equation}
	Furthermore, resorting to \eqref{inter2} and \eqref{sa39-1} once again yields that for any $\gamma\in (-\sigma,0)$,\begin{equation}
		\begin{split}
			\norm{(\aa,\u,\ta)}_{\dot{B}_{2,1}^{{\gamma}}}&\lesssim \Big(\norm{(\aa,\u,\ta)}_{\dot{B}_{2,\infty}^{{-\sigma}}}\Big)^{-\frac{\gamma}{\sigma}}\Big(\norm{(\aa,\u,\ta)}_{\dot{B}_{2,1}^{0}}\Big)^{1+\frac{\gamma}{\sigma}}\\
			&\lesssim \Big(\norm{(\aa,\u,\ta)}_{\dot{B}_{2,\infty}^{{-\sigma}}}^\ell+\|(\aa,\ta)\|^h_{\dot{B}_{2,1}^3}+\|\u\|^h_{\dot{B}_{2,1}^2}\Big)^{-\frac{\gamma}{\sigma}}(1+t)^{-\frac{{\sigma+\gamma}}{2}}\\
			&\lesssim (1+t)^{-\frac{{\sigma+\gamma}}{2}}.
		\end{split}
	\end{equation}
	Thus, after using the embedding $\dot{B}^0_{2,1}(\mathbb{R}^2)\hookrightarrow L^2(\mathbb{R}^2)$, the proof of Theorem \ref{dingli3} is finished.

	\appendix
	\section{Littlewood-Paley theory}
	For convenience of readers, we here briefly introduce the Littlewood-Paley decomposition, Besov spaces and some related analysis tools. More details and proofs can be found in \cite[Chapter 2]{MR2768550}.\par
	Choose a radial non-increasing function $\chi\in\mathcal{S}(\mathbb{R}^d)$ supported in $\{\xi\in\mathbb{R}^d:|\xi|\leq\frac{4}{3}\}$ and such that $\chi\equiv 1$ in $\{\xi\in\mathbb{R}^d:|\xi|\leq\frac{3}{4}\}$. Then $\varphi(\xi):=\chi(\frac{\xi}{2})-\chi(\xi)$ is supported in $\{\xi\in\mathbb{R}^d:\frac{3}{4}\leq|\xi|\leq\frac{8}{3}\}$ and satisfies\begin{equation*}
		\begin{split}
			\sum_{j\in\mathbb{Z}}\varphi(2^{-j}\xi)&=1\,\,\,\,\text{in \,\,}\mathbb{R}^d \backslash \{0\}.
		\end{split}
	\end{equation*}For any $j\in\mathbb{Z}$, the homogeneous dyadic block $\dot{\Delta}_j$ is defined by\begin{equation*}
		\begin{split}
			\dot{\Delta}_j u&:=\varphi(2^{-j}D)u=2^{2j}\int_{\mathbb{R}^d}g(2^jy)u(x-y)dy\quad\text{with}\quad g:=\mathcal{F}^{-1}\varphi.
		\end{split}
	\end{equation*}We denote by $\mathcal{S}_h^\prime(\mathbb{R}^d)$ the set of tempered distribution $u$ such that\begin{equation*}
		\lim_{j\rightarrow-\infty}\|\chi(2^{-j}D)u\|_{L^\infty}=0,
	\end{equation*}then the following decomposition holds for all $u\in\mathcal{S}_h^\prime(\mathbb{R}^d)$:\begin{equation*}\begin{split}
			u=\sum_{j\in\mathbb{Z}}\dot{\Delta}_j u.
		\end{split}
	\end{equation*}\par
	Based on the above decomposition, we give the definition of homogeneous Besov spaces as follows.\begin{definition}\label{besovdef}
		Let $s\in\mathbb{R}$ and $1\leq p,r\leq\infty$, the homogeneous Besov space $\dot{B}^{s}_{p,r}(\mathbb{R}^d)$ is defined by\begin{equation*}
			\dot{B}^{s}_{p,r}(\mathbb{R}^d):=\{u\in\mathcal{S}^\prime_h(\mathbb{R}^d):\|u\|_{\dot{B}^s_{p,r}}<\infty\}
		\end{equation*}with\begin{equation*}
			\|u\|_{\dot{B}^s_{p,r}}:=\|(2^{js}\|\dot{\Delta}_j u\|_{L^p})_{j\in\mathbb{Z} }\|_{\ell^r}.
		\end{equation*}
	\end{definition}
	The following well-known Berstein's inequalities will be frequently used in this sequel.\begin{lemma}\label{bernstein}
		Let $0<r<R$. There exists a constant $C$ such that for any nonnegative integer $k$, any couple $(p,q)\in[1,\infty]^2$ with $q\geq p\geq1$, and any function $u$ in $L^p(\mathbb{R}^d)$, there holds\begin{equation*}
			\begin{split}
				&\operatorname{supp}\mathcal{F}u\subset\{\xi\in\mathbb{R}^d:|\xi|\leq\lambda R \}\Longrightarrow\sup_{|\alpha|=k}\|\partial^\alpha u\|_{L^q}\leq C^{k+1}\lambda^{k+d(\frac{1}{p}-\frac{1}{q})}\|u\|_{L^p},\\
				&\operatorname{supp}\mathcal{F}u\subset\{\xi\in\mathbb{R}^d:\lambda r\leq|\xi|\leq\lambda R \}\Longrightarrow C^{-k-1}\lambda^{k}\|u\|_{L^p}\leq\sup_{|\alpha|=k}\|\partial^\alpha u\|_{L^p}\leq C^{k+1}\lambda^{k}\|u\|_{L^p}.
			\end{split}
		\end{equation*}
	\end{lemma}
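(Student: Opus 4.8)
The statement above is the classical Bernstein inequality on $\R^d$, and the plan is to deduce both estimates by rescaling a single fixed function of localized Fourier support and then applying Young's convolution inequality, isolating at the very end the one genuinely delicate point: that the constants grow only geometrically, not factorially, in the order of differentiation $k$.

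\textbf{The ball case.} Since $\operatorname{supp}\mathcal{F}u\subset\{|\xi|\le\lambda R\}$, I would fix once and for all $\phi\in\mathcal{S}(\R^d)$ with compact support and $\phi\equiv1$ on $\{|\xi|\le R\}$, so that $\widehat u=\phi(\lambda^{-1}\cdot)\widehat u$, whence $u=g_\lambda*u$ with $g_\lambda:=\lambda^{d}g(\lambda\cdot)$, $g:=\mathcal{F}^{-1}\phi$, and consequently $\partial^{\alpha}u=(\partial^{\alpha}g_\lambda)*u$ with $\partial^{\alpha}g_\lambda=\lambda^{d+|\alpha|}(\partial^{\alpha}g)(\lambda\cdot)$. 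Young's inequality with $1+\frac1q=\frac1p+\frac1s$ (the hypothesis $q\ge p$ being exactly what makes $s\in[1,\infty]$) together with a change of variables gives $\|\partial^{\alpha}u\|_{L^{q}}\le\lambda^{|\alpha|+d(1/p-1/q)}\|\partial^{\alpha}g\|_{L^{s}}\|u\|_{L^{p}}$, so the only thing left is the uniform bound $\|\partial^{\alpha}g\|_{L^{s}}\le C^{|\alpha|+1}$, which I would obtain by interpolating between $\|\partial^{\alpha}g\|_{L^{\infty}}\le\|\xi^{\alpha}\phi\|_{L^{1}}$ and $\|\partial^{\alpha}g\|_{L^{1}}$.

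\textbf{The annulus case.} The upper bound is just the ball case with $q=p$. For the lower bound I would fix $\widetilde\phi\in\mathcal{S}(\R^d)$ supported in an annulus away from the origin with $\widetilde\phi\equiv1$ on $\{r\le|\xi|\le R\}$, so $\widehat u=\widetilde\phi(\lambda^{-1}\cdot)\widehat u$, and use the multinomial identity $1=|\xi|^{-2k}\sum_{|\beta|=k}\frac{k!}{\beta!}\xi^{2\beta}$, valid on $\operatorname{supp}\widetilde\phi$; setting $m_{\beta}(\xi):=\xi^{\beta}\widetilde\phi(\xi)|\xi|^{-2k}\in\mathcal{S}(\R^d)$ and using $\xi^{\beta}\widehat u=(-i)^{k}\widehat{\partial^{\beta}u}$ one is led to
\begin{equation*}
u=(-i)^{k}\lambda^{-k}\sum_{|\beta|=k}\frac{k!}{\beta!}\,\bigl[\lambda^{d}(\mathcal{F}^{-1}m_{\beta})(\lambda\cdot)\bigr]*\partial^{\beta}u .
\end{equation*}
Young's inequality ($L^{1}*L^{p}\hookrightarrow L^{p}$), the scaling invariance $\|\lambda^{d}h(\lambda\cdot)\|_{L^{1}}=\|h\|_{L^{1}}$, and $\sum_{|\beta|=k}\frac{k!}{\beta!}=d^{k}$ then yield $\|u\|_{L^{p}}\le\lambda^{-k}d^{k}\bigl(\max_{|\beta|=k}\|\mathcal{F}^{-1}m_{\beta}\|_{L^{1}}\bigr)\sup_{|\alpha|=k}\|\partial^{\alpha}u\|_{L^{p}}$, which rearranges to the claimed inequality provided $\|\mathcal{F}^{-1}m_{\beta}\|_{L^{1}}\le C^{k+1}$ uniformly in $\beta$.

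\textbf{The main obstacle.} Everything up to here is bookkeeping; the actual content is the two uniform bounds $\|\partial^{\alpha}g\|_{L^{s}}\le C^{|\alpha|+1}$ and $\|\mathcal{F}^{-1}m_{\beta}\|_{L^{1}}\le C^{k+1}$, which rest on the same elementary estimate. The subtlety is that $m_{\beta}$ carries the $k$-dependent factor $|\xi|^{-2k}$, so one must verify that applying $(1-\Delta_{\xi})^{N}$ for a \emph{fixed} $N>d/2$ — the amount needed to produce an integrable weight $(1+|x|^{2})^{-N}$ on the physical side via $\|\mathcal{F}^{-1}m_{\beta}\|_{L^{1}}\lesssim\|(1-\Delta_{\xi})^{N}m_{\beta}\|_{L^{1}}$ — multiplies the $L^{\infty}$-norm of $m_{\beta}$ only by a \emph{polynomial} in $k$ (arising from differentiating $|\xi|^{-2k}$ and $\xi^{\beta}$ a bounded number of times on the fixed annulus $\operatorname{supp}\widetilde\phi$), which is then absorbed into a geometric factor $C^{k}$. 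I would establish this by the short induction on $k$ carried out in \cite[Lemma~2.1]{MR2768550}, or simply invoke that reference.
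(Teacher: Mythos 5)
Your proof is correct and is the standard argument (spectral cut-off plus Young's inequality for the direct estimates, the multiplier $\xi^{\beta}\widetilde\phi(\xi)|\xi|^{-2k}$ for the reverse one), which is exactly the proof in \cite[Lemma 2.1]{MR2768550} that the paper invokes without reproducing. You also correctly flag the only non-routine point, namely that the $L^1$ bounds on the rescaled kernels grow at most geometrically in $k$.
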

	As an immediate consequence of Definition \ref{besovdef} and
	Lemma \ref{bernstein}, we have the following statement.
	\begin{lemma}\label{prop} Let $s\in\mathbb{R}$ and $1\leq p,r\leq \infty$, the following properties hold true:
		\begin{enumerate}
			\item[(i)] Derivatives: for any $k$-th order derivative, it holds that\begin{equation*}
				\sup_{|\alpha|=k}\|\partial^\alpha u\|_{\dot{{B}}^{s}_{p,r}}\approx \|u\|_{\dot{{B}}^{s+k}_{p,r}}.
			\end{equation*}
			\item[(ii)] Embedding: for $1\leq p\leq \tilde{p}\leq \infty$ and $1\leq r\leq \tilde{r}\leq \infty$, one has\begin{equation*}\begin{split}
					&\dot{{B}}^{s}_{p,r}(\mathbb{R}^d)\hookrightarrow \dot{{B}}^{s-d(\frac{1}{p}-\frac{1}{\tilde{p}})}_{\tilde{p},\tilde{r}}(\mathbb{R}^d),\quad \dot{{B}}^{\frac{d}{p}}_{p,1}(\mathbb{R}^d)\hookrightarrow L^\infty(\mathbb{R}^d),\\
					&\quad\text{and}\quad \dot{{B}}^{0}_{p,1}(\mathbb{R}^d)\hookrightarrow L^p(\mathbb{R}^d).
				\end{split}
			\end{equation*}
			\item[(iii)] Interpolation: for any  $s_1<s_2$ and $0<\theta<1$, there holds\begin{align}
				&\|u\|_{\dot{B}^{\theta s_1+(1-\theta)s_2}_{p,r}}\leq \|u\|^\theta_{\dot{B}^{s_1}_{p,r}}\|u\|^{1-\theta}_{\dot{B}^{s_2}_{p,r}},\label{inter1}\\
				&\|u\|_{\dot{B}^{\theta s_1+(1-\theta)s_2}_{p,1}}\leq\frac{C}{s_2-s_1}\Big(\frac{1}{\theta}+\frac{1}{1-\theta}\Big) \|u\|^\theta_{\dot{B}^{s_1}_{p,\infty}}\|u\|^{1-\theta}_{\dot{B}^{s_2}_{p,\infty}}.\label{inter2}
			\end{align}
			\item [(iv)] Action of multiplier: Let $f$ be a smooth function on $\mathbb{R}^d\backslash\{0\}$ which is homogeneous of degree $m$, then we have\begin{equation*}
				\|f(D)u\|_{\dot{B}^{s-m}_{p,r}}\lesssim\|u\|_{\dot{B}^{s}_{p,r}}.
			\end{equation*}
		\end{enumerate}		
	\end{lemma}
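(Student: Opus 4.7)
All four assertions follow from combining the frequency-localization of dyadic blocks with Bernstein's inequality (Lemma \ref{bernstein}) and the defining $\ell^r$-structure of $\|\cdot\|_{\dot B^s_{p,r}}$. The plan is to treat each item by reducing to a pointwise-in-$j$ statement on $\dot\Delta_j u$, then re-summing in $\ell^r$.

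For (i), I would use that $\operatorname{supp}\mathcal{F}(\dot\Delta_j u)\subset\{|\xi|\approx 2^j\}$, so the second half of Lemma \ref{bernstein} applied to $\partial^\alpha\dot\Delta_j u$ with $|\alpha|=k$ gives $\|\partial^\alpha\dot\Delta_j u\|_{L^p}\approx 2^{jk}\|\dot\Delta_j u\|_{L^p}$; multiplying by $2^{js}$ and taking the $\ell^r(\mathbb Z)$-norm yields the claimed equivalence. For (ii), the first embedding is obtained by applying the first half of Bernstein's inequality to each $\dot\Delta_j u$ to get $\|\dot\Delta_j u\|_{L^{\tilde p}}\lesssim 2^{jd(1/p-1/\tilde p)}\|\dot\Delta_j u\|_{L^p}$, rewriting this as $2^{j(s-d(1/p-1/\tilde p))}\|\dot\Delta_j u\|_{L^{\tilde p}}\lesssim 2^{js}\|\dot\Delta_j u\|_{L^p}$, and then invoking the continuous inclusion $\ell^r\hookrightarrow\ell^{\tilde r}$ for $r\leq\tilde r$. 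The embedding $\dot B^{d/p}_{p,1}\hookrightarrow L^\infty$ follows from $\|u\|_{L^\infty}\leq\sum_{j}\|\dot\Delta_j u\|_{L^\infty}\lesssim\sum_j 2^{jd/p}\|\dot\Delta_j u\|_{L^p}=\|u\|_{\dot B^{d/p}_{p,1}}$, after one checks the absolute convergence of $\sum_j\dot\Delta_j u$ in $L^\infty$ (using the definition of $\mathcal S'_h$ to control low frequencies and Bernstein to control high ones); the embedding $\dot B^0_{p,1}\hookrightarrow L^p$ is the same computation with the trivial bound $\|\dot\Delta_j u\|_{L^p}\lesssim\|\dot\Delta_j u\|_{L^p}$.

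For (iii), both interpolation inequalities reduce to H\"older's inequality at the level of $2^{js}\|\dot\Delta_j u\|_{L^p}$. For \eqref{inter1} I would write $2^{j(\theta s_1+(1-\theta)s_2)}\|\dot\Delta_j u\|_{L^p}=\bigl(2^{js_1}\|\dot\Delta_j u\|_{L^p}\bigr)^\theta\bigl(2^{js_2}\|\dot\Delta_j u\|_{L^p}\bigr)^{1-\theta}$ and apply H\"older in $\ell^r(\mathbb Z)$ with exponents $1/\theta$ and $1/(1-\theta)$. For the sharper \eqref{inter2}, the standard trick is to split the sum $\sum_j 2^{j(\theta s_1+(1-\theta)s_2)}\|\dot\Delta_j u\|_{L^p}$ at a threshold $N\in\mathbb Z$ to be optimized, bound the low-$j$ piece by $\|u\|_{\dot B^{s_1}_{p,\infty}}\sum_{j\leq N}2^{j(1-\theta)(s_2-s_1)}$ (a convergent geometric series with ratio controlled by $(1-\theta)(s_2-s_1)$) and the high-$j$ piece analogously in terms of $\|u\|_{\dot B^{s_2}_{p,\infty}}$, then choose $N$ to balance and read off the factor $\tfrac{1}{\theta}+\tfrac{1}{1-\theta}$ after summing the geometric series. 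Finally, for (iv), $m$-homogeneity of $f$ combined with a smooth cut-off $\psi$ equal to $1$ on the annulus supporting $\varphi$ yields $f(D)\dot\Delta_j u=2^{jm}(\psi f)(2^{-j}D)\dot\Delta_j u$ with a convolution kernel whose $L^1$-norm is $j$-independent; hence $\|f(D)\dot\Delta_j u\|_{L^p}\lesssim 2^{jm}\|\dot\Delta_j u\|_{L^p}$, and taking the weighted $\ell^r$-norm gives the claim.

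None of the steps is a real obstacle; the only technical point that deserves care is the summability issue in (ii) ensuring that $\sum_j\dot\Delta_j u$ converges to $u$ in $L^\infty$ (resp.\ $L^p$), which requires the hypothesis $u\in\mathcal S'_h$ to kill the low-frequency tail, plus a uniform bound on the Fourier multipliers used in (iv), which follows from the fact that $\psi f\in\mathcal S(\mathbb R^d)$ whenever $f$ is smooth away from the origin and homogeneous.
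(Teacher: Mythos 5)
Your proposal is correct and follows exactly the route the paper indicates: the paper offers no written proof, merely stating the lemma as an immediate consequence of Definition \ref{besovdef} and Bernstein's inequality (Lemma \ref{bernstein}) with a reference to \cite[Chapter 2]{MR2768550}, and your block-by-block arguments (Bernstein on each $\dot\Delta_j u$, $\ell^r\hookrightarrow\ell^{\tilde r}$, H\"older and the threshold-splitting trick for \eqref{inter2}, and the rescaled multiplier kernel for (iv)) are precisely the standard details behind that assertion. The only cosmetic caveat is in (i), where for a fixed $\alpha$ only the upper bound $\|\partial^\alpha\dot\Delta_j u\|_{L^p}\lesssim 2^{jk}\|\dot\Delta_j u\|_{L^p}$ holds and the reverse inequality requires the supremum over $|\alpha|=k$, exactly as stated in Lemma \ref{bernstein} and in the lemma itself.
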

	To estimate the time-dependent function valued in Besov space, we need the following mixed time- space introduced by Chemin and Lerner \cite{MR1354312}.
	\begin{definition}
		Let $T>0$, $s\in\mathbb{R}$ and $1\leq p,r,q\leq\infty$, the space $\widetilde{L}^q(0,T;\dot{B}^s_{p,r}(\mathbb{R}^d))$ is defined by\begin{equation*}
			\begin{split}
				\widetilde{L}^q(0,T;\dot{B}^s_{p,r}(\mathbb{R}^d)):=\{u\in L^q(0,T;\mathcal{S}^\prime_h(\mathbb{R}^d)):\|u\|_{\widetilde{L}^q_T(\dot{B}^s_{p,r})}<\infty\}
			\end{split}
		\end{equation*}with\begin{equation*}
			\|u\|_{\widetilde{L}^q_T(\dot{B}^s_{p,r})}:=\|(2^{js}\|\dot{\Delta}_j u\|_{L_T^q(L^p)})_{j\in\mathbb{Z}}\|_{\ell^r}.
		\end{equation*}
	\end{definition}
	\begin{remark}\upshape\label{minkow}
		By the Minkowski's inequality, one easily observe that\begin{equation}
			\begin{cases}
				\|u\|_{\widetilde{L}^q_T(\dot{B}^s_{p,r})}\leq\|u\|_{{L}^q_T(\dot{B}^s_{p,r})}\quad \text{if}\quad q\leq r,  \\
				\|u\|_{{L}^q_T(\dot{B}^s_{p,r})}\leq\|u\|_{\widetilde{L}^q_T(\dot{B}^s_{p,r})}\quad \text{if}\quad q\geq r.
			\end{cases}
		\end{equation}
	\end{remark}
	For any $u\in\mathcal{S}_h^\prime(\mathbb{R}^d)$, we designate $u^\ell$ and $u^h$ as the low-frequency part and high-frequency part of $u$ respectively, that is\begin{equation*}
		\begin{split}
			u^\ell:=\sum_{j\leq -1}\dot{\Delta}_j u,\quad u^h:=\sum_{j\geq 0}\dot{\Delta}_j u.
		\end{split}
	\end{equation*}Besides, in order to restrict the Besov norms to the low and high frequencies, we adopt the following notations:\begin{equation*}
		\begin{split}
			\|u\|_{\dot{B}^s_{p,r}}^\ell&:=\|(2^{js}\|\dot{\Delta}_j u\|_{L^p})_{j\leq 0}\|_{\ell^r},\quad \|u\|_{\dot{B}^s_{p,r}}^h:=\|(2^{js}\|\dot{\Delta}_j u\|_{L^p})_{j\geq -1}\|_{\ell^r},\\
			\|u\|_{\widetilde{L}^q_T(\dot{B}^s_{p,r})}^\ell&:=\|(2^{js}\|\dot{\Delta}_j u\|_{L_T^q(L^p)})_{j\leq 0}\|_{\ell^r},\quad \|u\|_{\widetilde{L}^q_T(\dot{B}^s_{p,r})}^h:=\|(2^{js}\|\dot{\Delta}_j u\|_{L_T^q(L^p)})_{j\geq -1}\|_{\ell^r}.
		\end{split}
	\end{equation*}
	It is easy to check that for any $s^\prime>0$,\begin{equation}\label{lhembedding}
		\begin{cases}
			\|u^\ell\|_{\dot{B}^s_{p,r}}\lesssim	\|u\|_{\dot{B}^s_{p,r}}^\ell\lesssim \|u\|_{\dot{B}^{s-s^\prime}_{p,r}}^\ell,\\
			\|u^h\|_{\dot{B}^s_{p,r}}\lesssim	\|u\|_{\dot{B}^s_{p,r}}^h\lesssim \|u\|_{\dot{B}^{s+s^\prime}_{p,r}}^h.
		\end{cases}
	\end{equation}
	\par  Next, we gather some product and commutator estimates, which play a fundamental role in the analysis of the nonlinear terms.
	\begin{lemma}
		The following product estimates hold true:\begin{itemize}
			\item Let $s>0$ and $1\leq p,r\leq \infty$, it holds that\begin{equation}\label{law}
				\begin{split}
					\norm{fg}_{\dot{B}_{p,r}^s}\lesssim \norm{f}_{L^\infty}\norm{g}_{\dot{B}_{p,r}^s}+\norm{g}_{L^\infty}\norm{f}_{\dot{B}_{p,r}^s}.
				\end{split}
			\end{equation}
			\item Let $2\leq p\leq \infty$, $s_1\leq\frac{d}{p}$, $s_2\leq\frac{d}{p}$ and $s_1+s_2>0$, it holds that\begin{equation}\label{daishu}
				\begin{split}
					\|fg\|_{\dot{B}_{p,1}^{s_1+s_2-\frac{d}{p}}}\lesssim \|f\|_{\dot{B}_{p,1}^{s_1}}\|g\|_{\dot{B}_{p,1}^{s_2}}.
				\end{split}
			\end{equation}
			\item Let $2\leq p\leq \infty$, $s_1\leq\frac{d}{p}$, $s_2<\frac{d}{p}$ and $s_1+s_2\geq0$, it holds that\begin{equation}\label{guangyidaishu}
				\begin{split}
					\|fg\|_{\dot{B}_{p,\infty}^{s_1+s_2-\frac{d}{p}}}\lesssim \|f\|_{\dot{B}_{p,1}^{s_1}}\|g\|_{\dot{B}_{p,\infty}^{s_2}}.
				\end{split}
			\end{equation}
		\end{itemize}
	\end{lemma}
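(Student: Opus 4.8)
The plan is to derive all three inequalities from Bony's homogeneous paraproduct decomposition
\begin{equation*}
fg=\dot T_fg+\dot T_gf+\dot R(f,g),\qquad
\dot T_fg:=\sum_{j\in\mathbb Z}\dot S_{j-1}f\,\dot{\Delta}_jg,\qquad
\dot R(f,g):=\sum_{j\in\mathbb Z}\dot{\Delta}_jf\,\widetilde{\dot{\Delta}}_jg,
\end{equation*}
where $\dot S_{j-1}:=\sum_{k\le j-2}\dot{\Delta}_k$ and $\widetilde{\dot{\Delta}}_j:=\dot{\Delta}_{j-1}+\dot{\Delta}_j+\dot{\Delta}_{j+1}$. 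The two structural facts that drive everything are: (i) $\dot{\Delta}_q(\dot S_{j-1}f\,\dot{\Delta}_jg)\equiv0$ whenever $|q-j|>N_0$ for a fixed integer $N_0$, because the spectrum of each summand of $\dot T_fg$ lies in an annulus of size $2^j$; and (ii) the spectrum of $\dot{\Delta}_jf\,\widetilde{\dot{\Delta}}_jg$ sits in a ball of radius $\lesssim2^j$, so $\dot{\Delta}_q\dot R(f,g)\equiv0$ unless $j\ge q-N_1$. Granting these, I would estimate each dyadic block with H\"older's inequality combined with Bernstein's inequality (Lemma \ref{bernstein}), and then sum the resulting numerical series in $\ell^r(\mathbb Z)$ by Young's inequality for sequences.

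I would treat \eqref{law} first, since it needs no regularity bookkeeping. For the paraproducts one uses $\|\dot S_{j-1}f\|_{L^\infty}\le\|f\|_{L^\infty}$ to get $2^{qs}\|\dot{\Delta}_q\dot T_fg\|_{L^p}\lesssim\|f\|_{L^\infty}\sum_{|j-q|\le N_0}2^{js}\|\dot{\Delta}_jg\|_{L^p}$, whose $\ell^r_q$-norm is $\lesssim\|f\|_{L^\infty}\|g\|_{\dot B^s_{p,r}}$, and symmetrically for $\dot T_gf$. For the remainder one writes $2^{qs}\|\dot{\Delta}_q\dot R(f,g)\|_{L^p}\lesssim\|f\|_{L^\infty}\sum_{j\ge q-N_1}2^{(q-j)s}\bigl(2^{js}\|\widetilde{\dot{\Delta}}_jg\|_{L^p}\bigr)$; here the hypothesis $s>0$ is exactly what makes $\bigl(2^{ms}\mathbf 1_{\{m\le N_1\}}\bigr)_{m\in\mathbb Z}\in\ell^1(\mathbb Z)$, so the convolution preserves $\ell^r$ and this term is again $\lesssim\|f\|_{L^\infty}\|g\|_{\dot B^s_{p,r}}$. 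Collecting the three pieces and attributing $\dot T_fg+\dot R(f,g)$ to $\|f\|_{L^\infty}\|g\|_{\dot B^s_{p,r}}$ and $\dot T_gf$ to $\|g\|_{L^\infty}\|f\|_{\dot B^s_{p,r}}$ gives \eqref{law}.

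For \eqref{daishu} and \eqref{guangyidaishu} the new ingredient is a Bernstein bound on the low-frequency factor: with $d_k:=2^{ks_1}\|\dot{\Delta}_kf\|_{L^p}$,
\begin{equation*}
\|\dot S_{j-1}f\|_{L^\infty}\lesssim\sum_{k\le j-2}2^{kd/p}\|\dot{\Delta}_kf\|_{L^p}
=\sum_{k\le j-2}2^{k(d/p-s_1)}d_k\lesssim2^{j(d/p-s_1)}\|f\|_{\dot B^{s_1}_{p,1}},
\end{equation*}
which is legitimate precisely because $s_1\le d/p$ (in the borderline case $s_1=d/p$ this just reads $\|\dot S_{j-1}f\|_{L^\infty}\lesssim\|f\|_{\dot B^{d/p}_{p,1}}$). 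Inserting this into $\dot T_fg$ and using $j\approx q$ turns the prefactor $2^{q(s_1+s_2-d/p)}2^{j(d/p-s_1)}$ into $2^{js_2}$, so $\dot T_fg$ is controlled by $\|f\|_{\dot B^{s_1}_{p,1}}\|g\|_{\dot B^{s_2}_{p,r}}$ with $r=1$ for \eqref{daishu} and $r=\infty$ for \eqref{guangyidaishu}; the term $\dot T_gf$ is handled identically with $s_1,s_2$ swapped, which is where one uses $s_2\le d/p$ in \eqref{daishu} and the strict bound $s_2<d/p$ in \eqref{guangyidaishu} — the strict inequality being forced because only the $\ell^\infty$-norm of $g$ is available, so the geometric series $\sum_{k\le j-2}2^{k(d/p-s_2)}$ must converge on its own. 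For $\dot R(f,g)$ I would exploit $p\ge2$ (hence $p/2\ge1$) to get, by H\"older then Bernstein, $\|\dot{\Delta}_q\dot R(f,g)\|_{L^p}\lesssim2^{qd/p}\sum_{j\ge q-N_1}\|\dot{\Delta}_jf\|_{L^p}\|\widetilde{\dot{\Delta}}_jg\|_{L^p}$, so that with $a_j:=2^{js_1}\|\dot{\Delta}_jf\|_{L^p}$ and $b_j:=2^{js_2}\|\widetilde{\dot{\Delta}}_jg\|_{L^p}$ one obtains $2^{q(s_1+s_2-d/p)}\|\dot{\Delta}_q\dot R(f,g)\|_{L^p}\lesssim\sum_{j\ge q-N_1}2^{(q-j)(s_1+s_2)}a_jb_j$; since $s_1+s_2>0$ (resp.\ $\ge0$) this sums in $q$ to $\lesssim\|a\|_{\ell^1}\|b\|_{\ell^\infty}\lesssim\|f\|_{\dot B^{s_1}_{p,1}}\|g\|_{\dot B^{s_2}_{p,\infty}}$, which yields \eqref{daishu} and \eqref{guangyidaishu} after summing the three contributions.

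I expect the only delicate points to be the remainder term $\dot R(f,g)$ — it is there that the low regularity bites and that the restriction $p\ge2$ (opening the room $L^{p/2}$) and the sign conditions on $s_1+s_2$ are genuinely used — together with the endpoint bookkeeping $s_1=d/p$ for $\dot T_fg$ and the exclusion $s_2=d/p$ in \eqref{guangyidaishu} for $\dot T_gf$ with target in $\dot B^{\,\cdot}_{p,\infty}$; the hypotheses are tuned exactly to accommodate these constraints. Everything else is routine, and the full computations are classical, following \cite[Chapter 2]{MR2768550}.
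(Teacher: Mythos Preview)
Your proposal is correct and is precisely the classical argument. The paper does not actually prove this lemma: it is stated in the appendix as a standard tool, with the blanket remark that ``more details and proofs can be found in \cite[Chapter 2]{MR2768550}.'' Your Bony paraproduct decomposition, Bernstein-plus-H\"older treatment of each piece, and careful accounting of the endpoint conditions ($s_1\le d/p$ versus $s_2<d/p$, $s_1+s_2>0$ versus $\ge0$, and the role of $p\ge2$ in the remainder) is exactly the proof one finds in that reference, so there is nothing to compare.
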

	
	\begin{lemma}
		Let $1\leq p\leq \infty$ and $\w$ be an vector field over $\mathbb{R}^d$, then the following commutator estimates hold true:\begin{itemize}
			\item If $-\frac{d}{p}<s\leq\frac{d}{p}+1$, one has\begin{equation}\label{jiaohuanzi1}
				\begin{split}
					\sum_{j\in\mathbb{Z}}2^{js}\|[\dot{\Delta}_j,\w\cdot\nabla]f\|_{L^p}\lesssim \|\w\|_{\dot{B}^{\frac{d}{p}+1}_{p,1}}\|f\|_{\dot{B}^{s}_{p,1}}.
				\end{split}
			\end{equation}
			\item If $s>0$, one has\begin{equation}\label{jiaohuanzi2}
				\begin{split}
					\sum_{j\in\mathbb{Z}}2^{js}\|[\dot{\Delta}_j,\w\cdot\nabla]f\|_{L^p}\lesssim \|\nabla \w\|_{L^\infty}\|f\|_{\dot{B}^{s}_{p,1}}+\|\nabla f\|_{L^\infty}\|\w\|_{\dot{B}^{s}_{p,1}}.
				\end{split}
			\end{equation}
			\item If $-\frac{d}{p}\leq s<\frac{d}{p}+1$, one has\begin{equation}\label{jiaohuanzi3}
				\begin{split}
					\sup_{j\in\mathbb{Z}}2^{js}\|[\dot{\Delta}_j,\w\cdot\nabla]f\|_{L^p}\lesssim \|\w\|_{\dot{B}^{\frac{d}{p}+1}_{p,1}}\|f\|_{\dot{B}^{s}_{p,\infty}}.
				\end{split}
			\end{equation}
		\end{itemize}
	\end{lemma}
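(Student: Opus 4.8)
The plan is to prove the three estimates \eqref{jiaohuanzi1}, \eqref{jiaohuanzi2} and \eqref{jiaohuanzi3} at one stroke with Bony's paraproduct calculus, isolating a single genuine commutator --- the only place where a cancellation is used --- from several lower-order corrections that carry no commutator structure and are controlled by the off-the-shelf continuity of the paraproduct and remainder operators on homogeneous Besov spaces. Writing $\w\cdot\nabla f=\sum_{k=1}^{d}w^{k}\partial_{k}f$, applying Bony's decomposition $w^{k}\partial_{k}f=T_{w^{k}}\partial_{k}f+T_{\partial_{k}f}w^{k}+R(w^{k},\partial_{k}f)$ to each product, doing the same to $w^{k}\partial_{k}\dot{\Delta}_{j}f$, and subtracting, one obtains
\[
[\dot{\Delta}_{j},\w\cdot\nabla]f=\sum_{k=1}^{d}\Big([\dot{\Delta}_{j},T_{w^{k}}]\partial_{k}f+\big(\dot{\Delta}_{j}(T_{\partial_{k}f}w^{k})-T_{\partial_{k}\dot{\Delta}_{j}f}w^{k}\big)+\big(\dot{\Delta}_{j}R(w^{k},\partial_{k}f)-R(w^{k},\partial_{k}\dot{\Delta}_{j}f)\big)\Big).
\]
Only the first group $[\dot{\Delta}_{j},T_{w^{k}}]\partial_{k}f$ retains a commutator structure; I would estimate it first and handle the rest by the standard bounds $\|T_{a}b\|_{\dot{B}^{\sigma}_{p,r}}\lesssim\|a\|_{L^{\infty}}\|b\|_{\dot{B}^{\sigma}_{p,r}}$, $\|T_{a}b\|_{\dot{B}^{\sigma+t}_{p,r}}\lesssim\|a\|_{\dot{B}^{t}_{\infty,\infty}}\|b\|_{\dot{B}^{\sigma}_{p,r}}$ for $t<0$, and $\|R(a,b)\|_{\dot{B}^{\sigma_{1}+\sigma_{2}}_{p,r}}\lesssim\|a\|_{\dot{B}^{\sigma_{1}}_{p,1}}\|b\|_{\dot{B}^{\sigma_{2}}_{p,r}}$ for $\sigma_{1}+\sigma_{2}>0$ (and at $\sigma_{1}+\sigma_{2}=0$ thanks to the $\ell^{1}$ index).

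For the principal commutator, spectral localization of $T$ gives $[\dot{\Delta}_{j},T_{w^{k}}]\partial_{k}f=\sum_{|j-j'|\le4}[\dot{\Delta}_{j},\dot{S}_{j'-1}w^{k}]\dot{\Delta}_{j'}\partial_{k}f$ with $\dot{S}_{m}:=\chi(2^{-m}D)$. Writing $\dot{\Delta}_{j}u=g_{j}\ast u$ where $g_{j}=2^{jd}g(2^{j}\cdot)$ and $g=\mathcal{F}^{-1}\varphi$, each summand equals $\int g_{j}(y)\big(\dot{S}_{j'-1}w^{k}(x-y)-\dot{S}_{j'-1}w^{k}(x)\big)\dot{\Delta}_{j'}\partial_{k}f(x-y)\,dy$, so the mean value theorem, $\|\,|\cdot|\,g_{j}\|_{L^{1}}\lesssim2^{-j}$ and Young's inequality give $\|[\dot{\Delta}_{j},\dot{S}_{j'-1}w^{k}]\dot{\Delta}_{j'}\partial_{k}f\|_{L^{p}}\lesssim2^{-j}\|\nabla\w\|_{L^{\infty}}\|\dot{\Delta}_{j'}\partial_{k}f\|_{L^{p}}\lesssim\|\nabla\w\|_{L^{\infty}}\|\dot{\Delta}_{j'}f\|_{L^{p}}$, using $\|\dot{\Delta}_{j'}\partial_{k}f\|_{L^{p}}\approx2^{j'}\|\dot{\Delta}_{j'}f\|_{L^{p}}$ and $2^{j'-j}\approx1$. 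Multiplying by $2^{js}$ and summing (resp. taking the supremum) over $j\in\mathbb{Z}$ --- the shift $|j-j'|\le4$ being harmless --- this group contributes $\lesssim\|\nabla\w\|_{L^{\infty}}\|f\|_{\dot{B}^{s}_{p,1}}$ (resp. $\lesssim\|\nabla\w\|_{L^{\infty}}\|f\|_{\dot{B}^{s}_{p,\infty}}$) for \emph{every} $s\in\mathbb{R}$, and $\|\nabla\w\|_{L^{\infty}}\lesssim\|\w\|_{\dot{B}^{\frac{d}{p}+1}_{p,1}}$ by $\dot{B}^{\frac{d}{p}}_{p,1}\hookrightarrow L^{\infty}$. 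This already proves \eqref{jiaohuanzi2} for the principal part and supplies the principal part of \eqref{jiaohuanzi1} and \eqref{jiaohuanzi3}.

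The correction terms are all spectrally supported near $2^{j}$, so their $2^{js}$-weighted $\ell^{1}$ (resp. $\ell^{\infty}$) sums of $L^{p}$ norms can be read off from the continuity estimates applied with $w^{k}\in\dot{B}^{\frac{d}{p}+1}_{p,1}$ and $\partial_{k}f\in\dot{B}^{s-1}_{p,1}$ (resp. $\dot{B}^{s-1}_{p,\infty}$). The remainder pieces $\dot{\Delta}_{j}R(w^{k},\partial_{k}f)$ and $R(w^{k},\partial_{k}\dot{\Delta}_{j}f)$ land in $\dot{B}^{\frac{d}{p}+s}_{p,\cdot}$, which is admissible precisely when $\frac{d}{p}+s>0$, i.e. $s>-\frac{d}{p}$, in the $\ell^{1}$ setting of \eqref{jiaohuanzi1}, and when $s\ge-\frac{d}{p}$ in \eqref{jiaohuanzi3} (the $\ell^{1}$ index on $\w$ saving the endpoint); this gives the lower restriction on $s$. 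The paraproduct pieces $\dot{\Delta}_{j}(T_{\partial_{k}f}w^{k})$ and $T_{\partial_{k}\dot{\Delta}_{j}f}w^{k}$ require the low-frequency truncations $\dot{S}_{j'-1}(\partial_{k}f)$ to be uniformly bounded in $L^{\infty}$, that is $s-1\le\frac{d}{p}$ for \eqref{jiaohuanzi1} and $s-1<\frac{d}{p}$ for \eqref{jiaohuanzi3} (the latter strict, since a geometric series in the low frequencies must converge); this gives the upper restriction $s\le\frac{d}{p}+1$, resp. $s<\frac{d}{p}+1$. Reassembling the principal part and these corrections and using $\dot{B}^{\frac{d}{p}+1}_{p,1}\hookrightarrow L^{\infty}$ once more to absorb $\|\nabla\w\|_{L^{\infty}}$ and the low-frequency truncations of $\w$ into $\|\w\|_{\dot{B}^{\frac{d}{p}+1}_{p,1}}$ yields \eqref{jiaohuanzi1} and \eqref{jiaohuanzi3}. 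For \eqref{jiaohuanzi2} with $s>0$, one instead keeps $\nabla f$ in $L^{\infty}$ and bounds the paraproduct and remainder corrections directly by $\|\nabla f\|_{L^{\infty}}\|\w\|_{\dot{B}^{s}_{p,1}}$ --- legitimate because $s>0$ makes the remainder series converge --- which completes \eqref{jiaohuanzi2}.

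The main obstacle is not any individual inequality but the bookkeeping of admissible index ranges: one must check that every regularity exponent produced by $T$ and $R$ stays inside the region where those operators are continuous, and it is exactly this check that pins down the two-sided constraint on $s$ and the distinction between the summable version (with $f\in\dot{B}^{s}_{p,1}$, giving a convergent $2^{js}$-weighted series in \eqref{jiaohuanzi1} and \eqref{jiaohuanzi2}) and the supremum version (with $f\in\dot{B}^{s}_{p,\infty}$ in \eqref{jiaohuanzi3}). By contrast, the kernel mean-value step and the finitely many harmless frequency shifts are routine and were already carried out in the estimate of the principal commutator.
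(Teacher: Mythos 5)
Your proposal is correct and is exactly the standard argument: the paper itself does not prove this lemma but refers to \cite[Chapter 2]{MR2768550}, and your Bony decomposition into the principal commutator $[\dot\Delta_j,T_{w^k}]\partial_k f$ (handled by the kernel mean-value estimate) plus paraproduct and remainder corrections, with the index bookkeeping that forces $s>-\frac{d}{p}$ (resp.\ $\geq$) from the remainder and $s\leq\frac{d}{p}+1$ (resp.\ $<$) from the paraproduct $T_{\partial_kf}w^k$, is precisely the proof given there. No discrepancy to report.
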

	Finally, we need the following lemma to bound the composition functions.
	\begin{lemma}\label{fuhe}
		Let $G$ be a smooth function on $\mathbb{R}$ with $G(0)=0$. Let $s>0,$ $1\leq p,r\leq \infty$. Then, for any function
		$f\in \dot{B}^{s}_{p,r}(\mathbb{R}^d)\cap L^\infty(\mathbb{R}^d)$, we have  $G(f)\in \dot{B}^{s}_{p,r}(\mathbb{R}^d)\cap L^\infty(\mathbb{R}^d)$ and\begin{equation}
			\begin{split}
				\|G(f)\|_{\dot{B}^{s}_{p,r}}\leq C\|f\|_{\dot{B}^{s}_{p,r}},
			\end{split}
		\end{equation}where the constant $C>0$ depends only on $\|f\|_{L^\infty}$, $G^\prime$, $s$ and $d$.
	\end{lemma}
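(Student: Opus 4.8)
The plan is to reconstruct the classical composition estimate in Besov spaces (as in \cite{MR2768550}) directly at the level of the homogeneous blocks. Writing $S_q:=\chi(2^{-q}D)$ so that $S_{q+1}-S_q=\dot\Delta_q$, and using that $f\in\dot B^s_{p,r}\cap L^\infty\subset\mathcal{S}_h^\prime$ with $s>0$ forces $S_q f\to 0$ in $L^\infty$ as $q\to-\infty$ and $S_q f\to f$ in $\mathcal{S}^\prime$ as $q\to+\infty$, the hypothesis $G(0)=0$ together with the mean value theorem gives, in $\mathcal{S}^\prime$,
\[
G(f)=\sum_{q\in\mathbb{Z}}\bigl(G(S_{q+1}f)-G(S_q f)\bigr)=\sum_{q\in\mathbb{Z}}m_q\,\dot\Delta_q f,\qquad m_q:=\int_0^1 G^\prime\bigl(S_q f+\tau\dot\Delta_q f\bigr)\,d\tau .
\]
By Bernstein's inequality (Lemma \ref{bernstein}), $\|S_q f\|_{L^\infty}+\|\dot\Delta_q f\|_{L^\infty}\le C\|f\|_{L^\infty}$ and $\|\partial^\alpha S_q f\|_{L^\infty}+\|\partial^\alpha\dot\Delta_q f\|_{L^\infty}\lesssim 2^{q|\alpha|}\|f\|_{L^\infty}$, so repeated application of the chain rule gives $\|m_q\|_{L^\infty}\le K:=\sup_{|y|\le C\|f\|_{L^\infty}}|G^\prime(y)|$ and $\|\partial^\alpha m_q\|_{L^\infty}\lesssim 2^{q|\alpha|}$, the constants depending only on $\|f\|_{L^\infty}$, $d$, $|\alpha|$ and the derivatives of $G$.

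Next I apply $\dot\Delta_j$ (which commutes with the $\mathcal{S}^\prime$-convergent series) and split the sum at $q\sim j$. For $q\ge j$ I use the crude estimate $\|\dot\Delta_j(m_q\dot\Delta_q f)\|_{L^p}\le K\|\dot\Delta_q f\|_{L^p}$, which yields $2^{js}\|\dot\Delta_j(m_q\dot\Delta_q f)\|_{L^p}\lesssim 2^{(j-q)s}c_q$ with $c_q:=2^{qs}\|\dot\Delta_q f\|_{L^p}\in\ell^r$, and $(2^{ks}\mathbf{1}_{k\le 0})_k\in\ell^1$ since $s>0$. For $q<j$ the product $m_q\dot\Delta_q f$ is not spectrally localized (composition destroys this), so I fix an even integer $N>s$ and use the smoothing inequality $\|\dot\Delta_j h\|_{L^p}\lesssim 2^{-jN}\sum_{|\alpha|=N}\|\partial^\alpha h\|_{L^p}$, valid because $\varphi$ vanishes to infinite order at the origin (write $\varphi(2^{-j}\xi)=2^{-jN}|\xi|^N\psi(2^{-j}\xi)$ with $\psi\in\mathcal{S}$ and $\psi(2^{-j}D)$ uniformly $L^p$-bounded). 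Together with Leibniz and the derivative bounds above this gives $\|\dot\Delta_j(m_q\dot\Delta_q f)\|_{L^p}\lesssim 2^{-N(j-q)}\|\dot\Delta_q f\|_{L^p}$, hence $2^{js}\|\dot\Delta_j(m_q\dot\Delta_q f)\|_{L^p}\lesssim 2^{-(N-s)(j-q)}c_q$ with $(2^{-(N-s)k}\mathbf{1}_{k>0})_k\in\ell^1$ since $N>s$. Summing both regimes over $q$ and applying Young's inequality for $\ell^r$ convolutions yields $\bigl(2^{js}\|\dot\Delta_j G(f)\|_{L^p}\bigr)_{j}\in\ell^r$ with norm $\lesssim\|f\|_{\dot B^s_{p,r}}$, which is the asserted bound; the estimate $\|G(f)\|_{L^\infty}\le K\|f\|_{L^\infty}$ is immediate from $G(0)=0$.

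The main obstacle is the low-frequency regime $q<j$: since $m_q$ arises from a composition it is not a spectrally localized multiplier, so the usual almost-orthogonality of the Littlewood--Paley blocks is unavailable, and one must instead trade regularity for decay, exploiting that every derivative of $m_q\dot\Delta_q f$ up to order $N>s$ is controlled by $2^{qN}$ times $\|\dot\Delta_q f\|_{L^p}$ --- this is precisely where the smoothness of $G$ (concretely $G\in C^{N+1}$ with $N>s$) is genuinely used. A secondary, bookkeeping-type issue is to justify the $\mathcal{S}^\prime$-convergence of the telescoping series and the membership of $G(f)$ in $\mathcal{S}_h^\prime$, so that its homogeneous Besov norm is well defined; this relies on $f\in\mathcal{S}_h^\prime$ and $s>0$, and is carried out exactly as in \cite{MR2768550}.
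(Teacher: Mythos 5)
Your argument is correct. The paper does not prove this lemma at all---it is recalled in the appendix with the proof deferred to \cite[Chapter~2]{MR2768550}---and what you have written is precisely the standard telescoping (first-linearization) proof from that reference: $G(f)=\sum_q m_q\dot\Delta_q f$ with $m_q=\int_0^1G'(S_qf+\tau\dot\Delta_qf)\,d\tau$, the crude $L^\infty$ bound on $m_q$ for $q\ge j$, and the trade of $N>s$ derivatives for the factor $2^{-N(j-q)}$ when $q<j$. The only point requiring a word more of care is the convergence $G(S_qf)\to G(f)$ in $\mathcal{S}'$ as $q\to+\infty$ and the membership $G(f)\in\mathcal{S}_h'$, which you correctly flag and which is handled exactly as in the cited source using $f\in\mathcal{S}_h'$, $|G(y)|\le K|y|$ on the relevant range, and $s>0$.
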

	
	\section*{Acknowledgments}
	Zhai was partially supported by the Guangdong Provincial Natural Science
	Foundation under grant 2024A1515030115 and 2022A1515011977.
	\bigskip
	
	\noindent{\bf Conflict of interest}
	On behalf of all authors, the corresponding author states that there is no conflict of interest.
	\vskip 0.4 true cm
	\noindent{\bf Data availability statement}
	Data sharing not applicable to this article as no datasets were generated 
	current study.

	\bibliographystyle{abbrv}

\begin{thebibliography}{10}
		
		\bibitem{MR3666563}
		H.~Abidi and P.~Zhang.
		\newblock On the global solution of a 3-{D} {MHD} system with initial data near
		equilibrium.
		\newblock {\em Comm. Pure Appl. Math.}, 70(8):1509--1561, 2017.
		
		\bibitem{alexakis2011two}
		A.~Alexakis.
		\newblock Two-dimensional behavior of three-dimensional magnetohydrodynamic
		flow with a strong guiding field.
		\newblock {\em Physical Review E}, 84(5):056330, 2011.
		
		\bibitem{MR2768550}
		H.~Bahouri, J. Chemin, and R.~Danchin.
		\newblock {\em Fourier analysis and nonlinear partial differential equations},
		volume 343 of {\em Grundlehren der Mathematischen Wissenschaften [Fundamental
			Principles of Mathematical Sciences]}.
		\newblock Springer, Heidelberg, 2011.
		
		\bibitem{burattini2010decay}
		P.~Burattini, O.~Zikanov, and B.~Knaepen.
		\newblock Decay of magnetohydrodynamic turbulence at low magnetic reynolds
		number.
		\newblock {\em Journal of fluid mechanics}, 657:502--538, 2010.
		
		\bibitem{MR128226}
		S.~Chandrasekhar.
		\newblock {\em Hydrodynamic and hydromagnetic stability}.
		\newblock International Series of Monographs on Physics. Clarendon Press,
		Oxford, 1961.
		
		\bibitem{MR2679372}
		F.~Charve and R.~Danchin.
		\newblock A global existence result for the compressible {N}avier-{S}tokes
		equations in the critical {$L^p$} framework.
		\newblock {\em Arch. Ration. Mech. Anal.}, 198(1):233--271, 2010.
		
		\bibitem{MR1354312}
		J.-Y. Chemin and N.~Lerner.
		\newblock Flot de champs de vecteurs non lipschitziens et \'{e}quations de
		{N}avier-{S}tokes.
		\newblock {\em J. Differential Equations}, 121(2):314--328, 1995.
		
		\bibitem{MR2675485}
		Q.~Chen, C.~Miao, and Z.~Zhang.
		\newblock Global well-posedness for compressible {N}avier-{S}tokes equations
		with highly oscillating initial velocity.
		\newblock {\em Comm. Pure Appl. Math.}, 63(9):1173--1224, 2010.
		
		\bibitem{MR1855277}
		R.~Danchin.
		\newblock Local theory in critical spaces for compressible viscous and
		heat-conductive gases.
		\newblock {\em Comm. Partial Differential Equations}, 26(7-8):1183--1233, 2001.
		
		\bibitem{MR3563240}
		R.~Danchin and L.~He.
		\newblock The incompressible limit in {$L^p$} type critical spaces.
		\newblock {\em Math. Ann.}, 366(3-4):1365--1402, 2016.
		
		\bibitem{MR1825486}
		P.~A. Davidson.
		\newblock {\em An introduction to magnetohydrodynamics}.
		\newblock Cambridge Texts in Applied Mathematics. Cambridge University Press,
		Cambridge, 2001.
		
		\bibitem{MR4526359}
		B.~Dong, J.~Wu, and X.~Zhai.
		\newblock Global small solutions to a special {$2\frac{1}{2}$}-{D} compressible
		viscous non-resistive {MHD} system.
		\newblock {\em J. Nonlinear Sci.}, 33(1):Paper No. 21, 37, 2023.
		
\bibitem{MR4752565}
B.~Dong, J.~Wu, and X.~Zhai.
\newblock Stability and exponential decay for the compressible viscous
non-resistive {MHD} system.
\newblock {\em Nonlinearity}, 37(7):Paper No. 075012, 27, 2024.
		
		
		\bibitem{MR2238891}
		B.~Ducomet and E.~Feireisl.
		\newblock The equations of magnetohydrodynamics: on the interaction between
		matter and radiation in the evolution of gaseous stars.
		\newblock {\em Comm. Math. Phys.}, 266(3):595--629, 2006.
		
		\bibitem{MR3390892}
		J.~Fan and Y.~Hu.
		\newblock Global strong solutions to the 1-{D} compressible magnetohydrodynamic
		equations with zero resistivity.
		\newblock {\em J. Math. Phys.}, 56(2):023101, 13, 2015.
		
		\bibitem{MR2451719}
		J.~Fan and W.~Yu.
		\newblock Strong solution to the compressible magnetohydrodynamic equations
		with vacuum.
		\newblock {\em Nonlinear Anal. Real World Appl.}, 10(1):392--409, 2009.
		
		\bibitem{freidberg1982ideal}
		J.~P. Freidberg.
		\newblock Ideal magnetohydrodynamic theory of magnetic fusion systems.
		\newblock {\em Reviews of Modern Physics}, 54(3):801, 1982.
		
		\bibitem{gallet2009influence}
		B.~Gallet, M.~Berhanu, and N.~Mordant.
		\newblock Influence of an external magnetic field on forced turbulence in a
		swirling flow of liquid metal.
		\newblock {\em Physics of Fluids}, 21(8), 2009.
		
		\bibitem{MR3377306}
		B.~Gallet and C.~R. Doering.
		\newblock Exact two-dimensionalization of low-magnetic-{R}eynolds-number flows
		subject to a strong magnetic field.
		\newblock {\em J. Fluid Mech.}, 773:154--177, 2015.
		
		\bibitem{MR3005540}
		Y.~Guo and Y.~Wang.
		\newblock Decay of dissipative equations and negative {S}obolev spaces.
		\newblock {\em Comm. Partial Differential Equations}, 37(12):2165--2208, 2012.
		
		\bibitem{MR2430634}
		X.~Hu and D.~Wang.
		\newblock Global solutions to the three-dimensional full compressible
		magnetohydrodynamic flows.
		\newblock {\em Comm. Math. Phys.}, 283(1):255--284, 2008.
		
		\bibitem{MR2646819}
		X.~Hu and D.~Wang.
		\newblock Global existence and large-time behavior of solutions to the
		three-dimensional equations of compressible magnetohydrodynamic flows.
		\newblock {\em Arch. Ration. Mech. Anal.}, 197(1):203--238, 2010.
		
		\bibitem{MR3895772}
		F.~Jiang and S.~Jiang.
		\newblock Nonlinear stability and instability in the {R}ayleigh-{T}aylor
		problem of stratified compressible {MHD} fluids.
		\newblock {\em Calc. Var. Partial Differential Equations}, 58(1):Paper No. 29,
		61, 2019.
		
		\bibitem{MR3694267}
		S.~Jiang and J.~Zhang.
		\newblock On the non-resistive limit and the magnetic boundary-layer for
		one-dimensional compressible magnetohydrodynamics.
		\newblock {\em Nonlinearity}, 30(9):3587--3612, 2017.
		
		\bibitem{1984Systems}
		S.~Kawashima.
		\newblock Systems of a hyperbolic-parabolic composite type, with applications
		to the equations of magnetohydrodynamics.
		\newblock {\em Doctoral Thesis Kyoto Univ}, 1984.
		
		\bibitem{MR3056749}
		H. Li, X.~Xu, and J.~Zhang.
		\newblock Global classical solutions to 3{D} compressible magnetohydrodynamic
		equations with large oscillations and vacuum.
		\newblock {\em SIAM J. Math. Anal.}, 45(3):1356--1387, 2013.
		
		\bibitem{MR3222576}
		T.~Li and T.~Qin.
		\newblock {\em Physics and partial differential equations. {V}ol. {II}}.
		\newblock Society for Industrial and Applied Mathematics (SIAM), Philadelphia,
		PA; Higher Education Press, Beijing, 2014.
		\newblock Translated from the 2000 Chinese edition by Yachun Li.
		
		\bibitem{MR3806077}
		Y.~Li.
		\newblock Global strong solutions to the one-dimensional heat-conductive model
		for planar non-resistive magnetohydrodynamics with large data.
		\newblock {\em Z. Angew. Math. Phys.}, 69(3):Paper No. 78, 21, 2018.
		
		
		\bibitem{MR4386439}
		Y.~Li.
		\newblock Global well-posedness for the three-dimensional full compressible
		viscous non-resistive {MHD} system.
		\newblock {\em J. Math. Fluid Mech.}, 24(1):Paper No. 28, 24, 2022.
		
		\bibitem{MR4008702}
		Y.~Li and L.~Jiang.
		\newblock Global weak solutions for the {C}auchy problem to one-dimensional
		heat-conductive {MHD} equations of viscous non-resistive gas.
		\newblock {\em Acta Appl. Math.}, 163:185--206, 2019.
		
		\bibitem{MR3983998}
		Y.~Li and Y.~Sun.
		\newblock Global weak solutions and long time behavior for 1{D} compressible
		{MHD} equations without resistivity.
		\newblock {\em J. Math. Phys.}, 60(7):071511, 22, 2019.
		
		\bibitem{MR3955615}
		Y.~Li and Y.~Sun.
		\newblock Global weak solutions to a two-dimensional compressible {MHD}
		equations of viscous non-resistive fluids.
		\newblock {\em J. Differential Equations}, 267(6):3827--3851, 2019.
		
		\bibitem{MR4289027}
		Y.~Li and Y.~Sun.
		\newblock On global-in-time weak solutions to a two-dimensional full
		compressible nonresistive {MHD} system.
		\newblock {\em SIAM J. Math. Anal.}, 53(4):4142--4177, 2021.
		
		\bibitem{MR4589732}
		Y.~Li, H.~Xu, and X.~Zhai.
		\newblock Global smooth solutions to the 3{D} compressible viscous
		non-isentropic magnetohydrodynamic flows without magnetic diffusion.
		\newblock {\em J. Geom. Anal.}, 33(8):Paper No. 246, 32, 2023.
		
		\bibitem{MR3377532}
		F.~Lin, L.~Xu, and P.~Zhang.
		\newblock Global small solutions of 2-{D} incompressible {MHD} system.
		\newblock {\em J. Differential Equations}, 259(10):5440--5485, 2015.
		
		\bibitem{MR4246170}
		Y.~Liu and T.~Zhang.
		\newblock Global weak solutions to a 2{D} compressible non-resistivity {MHD}
		system with non-monotone pressure law and nonconstant viscosity.
		\newblock {\em J. Math. Anal. Appl.}, 502(1):Paper No. 125244, 38, 2021.
		
		\bibitem{MR3210038}
		X.~Ren, J.~Wu, Z.~Xiang, and Z.~Zhang.
		\newblock Global existence and decay of smooth solution for the 2-{D} {MHD}
		equations without magnetic diffusion.
		\newblock {\em J. Funct. Anal.}, 267(2):503--541, 2014.
		
		\bibitem{MR3766969}
		Z.~Tan and Y.~Wang.
		\newblock Global well-posedness of an initial-boundary value problem for
		viscous non-resistive {MHD} systems.
		\newblock {\em SIAM J. Math. Anal.}, 50(1):1432--1470, 2018.
		
		\bibitem{MR3620698}
		J.~Wu and Y.~Wu.
		\newblock Global small solutions to the compressible 2{D} magnetohydrodynamic
		system without magnetic diffusion.
		\newblock {\em Adv. Math.}, 310:759--888, 2017.
		
		\bibitem{MR4672311}
		J.~Wu and X.~Zhai.
		\newblock Global small solutions to the 3{D} compressible viscous non-resistive
		{MHD} system.
		\newblock {\em Math. Models Methods Appl. Sci.}, 33(13):2629--2656, 2023.
		
		\bibitem{MR4447771}
		J.~Wu and Y.~Zhu.
		\newblock Global well-posedness for 2{D} non-resistive compressible {MHD}
		system in periodic domain.
		\newblock {\em J. Funct. Anal.}, 283(7):Paper No. 109602, 49, 2022.
		
		\bibitem{MR4188989}
		Z.~Xin and J.~Xu.
		\newblock Optimal decay for the compressible {N}avier-{S}tokes equations
		without additional smallness assumptions.
		\newblock {\em J. Differential Equations}, 274:543--575, 2021.
		
		\bibitem{MR3296601}
		L.~Xu and P.~Zhang.
		\newblock Global small solutions to three-dimensional incompressible
		magnetohydrodynamical system.
		\newblock {\em SIAM J. Math. Anal.}, 47(1):26--65, 2015.
		
		
		\bibitem{MR3624387}
		J.~Zhang and X.~Zhao.
		\newblock On the global solvability and the non-resistive limit of the
		one-dimensional compressible heat-conductive {MHD} equations.
		\newblock {\em J. Math. Phys.}, 58(3):031504, 17, 2017.
		
		\bibitem{MR3448784}
		T.~Zhang.
		\newblock Global solutions to the 2{D} viscous, non-resistive {MHD} system with
		large background magnetic field.
		\newblock {\em J. Differential Equations}, 260(6):5450--5480, 2016.
		
		\bibitem{MR4094974}
		X.~Zhong.
		\newblock On local strong solutions to the 2{D} {C}auchy problem of the
		compressible non-resistive magnetohydrodynamic equations with vacuum.
		\newblock {\em J. Dynam. Differential Equations}, 32(2):505--526, 2020.
		
		\bibitem{MR4064858}
		X.~Zhong.
		\newblock Strong solutions to the {C}auchy problem of two-dimensional
		non-barotropic non-resistive magnetohydrodynamic equations with zero heat
		conduction.
		\newblock {\em J. Differential Equations}, 268(9):4921--4944, 2020.
		
	\end{thebibliography}
	
\end{document}